\newcommand{\AM}{\operatorname{argmax}}
\newtheorem{theorem}{Theorem}
\newtheorem{prop}{Proposition}
\newtheorem{definition}{Definition}
\newtheorem{corollary}{Corollary}
\newtheorem{remark}{Remark}
\newtheorem{lem}{Lemma}
\newtheorem{example}{Example}
\title{Multi-to -one dimensional and semi-discrete screening}
\date{\today}
\author{Omar {Abdul Halim}} 
\email{oabdulh1@ualberta.ca, pass@ualberta.ca}
\author{Brendan Pass}
\thanks{The work of OAH was completed in partial fulfillment of the requirements for a doctoral degree in
mathematics at the University of Alberta. BP is pleased to acknowledge the support of Natural Sciences and
Engineering Research Council of Canada Discovery Grant numbers 04658-2018
and 04864-2024.}
\address{University of Alberta, Edmonton, Alberta, Canada}
\begin{document}

\begin{abstract}
We study the monopolist's screening problem with a multi-dimensional distribution of consumers and a one-dimensional space of goods.  We establish general conditions under which solutions satisfy a structural condition known as nestedness, which greatly simplifies their analysis and characterization.  Under these assumptions, we go on to develop a general method to solve the problem, either in closed form or with relatively simple numerical computations, and illustrate it with examples.  These results are established both when the monopolist has access to only a discrete subset of the one-dimensional space of products, as well as when the entire continuum is available.  In the former case, we also establish a uniqueness result.
\end{abstract}
\maketitle
\section{Introduction}

The monopolist's, or principal-agent, problem plays a crucial role in economic theory.  Following, for example, Wilson \cite{Wilson93},
the problem can be described as follows (although other interpretations are possible as well): a monopolist sells goods from a set $Y$ to a collection of consumers $X$.  Knowing the cost $c(y)$ to produce each good $y \in Y$, the preference $b(x,y)$ of each potential consumer $x \in X$   for each good $y \in Y$ and the relative frequency $f(x)$ of consumer types, her goal is to choose which goods to produce, and the prices to charge for them 
so as to maximize her profits.

This nonlinear pricing problem is well understood when both consumer types and goods have only one dimension of heterogeneity, at least under the celebrated Spence-Mirrlees condition on preferences \cite{mirrlees, MaskinRiley84, MUSSARosen78}.  In contrast, scenarios where consumers and/or goods exhibit multi-dimensional heterogeneity, known as multi-dimensional screening problems in the literature, are much more challenging, and despite considerable efforts and achievements by many authors, are still not well understood.  A general result of Carlier ensures existence of an optimal pricing strategy \cite{CARLIER2001}.  Among an expansive literature, we mention a seminal contribution of Rochet-Chone \cite{RochetChone98} introducing a complicated general approach to the problem when preferences are linear in types, and proposing a solution to an example where both consumer types and products are two-dimensional.  In the process, they discovered the \emph{bunching} phenomena in which different types choose the same good at optimality.  McCann-Zhang \cite{McCannZhang24} developed delicate duality and free boundary tools, leading to a refinement of the Rochet-Chone solution, while, in another direction, work of Figalli-Kim-McCann \cite{FigalliKimMcCann11} uncovered conditions under which the problem is a concave maximization for general preferences. Noldeke-Samuelson \cite{NoldekeSamuelson18} and McCann-Zhang \cite{McCannZhang19} have also extended existence and uniqueness results to problems where consumers' utilities are non-linear in prices. 
Much of this research exploits, either directly or indirectly, a connection to the mathematical problem of optimal transport (or, equivalently, the economic problem of matching under transferable utility) \cite{Santambrogio, Galichon16}.

We focus here on the case where consumer types are multi-dimensional (in fact, two-dimensional in the majority of the paper) but goods are one-dimensional.  Such models have already seen a fair bit of attention in the literature \cite{Deneckere, laffont, basov-2003}, likely because they are the simplest setting in which one can explore the effects of consumers' multi-dimensional heterogeneity, and, as highlighted by Basov \cite{basov-book}, it is natural to consider problems where types are higher dimensional than goods, reflecting the high degree of idiosyncrasy in consumers' tastes.

In the simpler setting of optimal transport (OT), the second named author, together with Chiappori and McCann recently developed a condition, known as \emph{nestedness}, under which the solution to the OT problem between a high dimensional source measure and a one-dimensional target can be characterized in a very simple way, and in fact be solved almost explicitly \cite{ChiapporiMcCannPass17}.  As solutions to the monopolist's problem indeed solve an optimal transport problem between the distribution $\mu=f(x)dx$ of agents and the distribution $\nu$ of purchased goods at optimality, with surplus given by their preference function $b(x,y)$, one might hope that nestedness is present in the monopolist's problem as well, and expect it to greatly simplify the analysis if so.   However,  checking nestedness is far from straightforward; in the standard OT problem, it depends on the interaction between the two measures $\mu$ and $\nu$ to be matched and the surplus function $b$. Since in the monopolist's problem $\nu$ is \emph{endogenous}, it is not possible to check nestedness directly.

The main contribution of this paper is to establish conditions under which solutions to the monopolist's problem are nested, and to exploit the resulting structure to analyze the solution.  We begin by working in a semi-discrete setting, where we only allow a finite number of goods, chosen from the original one-dimensional space of allocations (although, as mentioned below, several results will eventually be translated back to the continuous goods setting).  Though problems with a finite allocation space have certainly been considered before, they do not seem to have been explored in our multi-to one-dimensional setting.  As a side contribution, we develop a theory of optimal transport in this setting analogous to \cite{ChiapporiMcCannPass17}, including a condition (named discrete nestedness) under which the problem can be solved nearly explicitly.  This theory is, we believe, of independent interest.  Turning back to the monopolist's problem, the semi-discrete framework has significant technical advantages, as it makes perturbation arguments, commonplace in the mathematical calculus of variations, much simpler.  We believe that it also makes the economic interpretation of the nested structure more transparent; in the monopolist's setting, discrete nestedness essentially means that while a consumer may be indifferent between two goods, they will \emph{never} be indifferent among three or more.  Alternatively, discrete nestedness can be expressed as follows: when faced with an optimal pricing schedule, whenever an agent prefers the $i$th  good to the $i+1$-th one, (s)he will necessarily also prefer the $j$th to the $j+1$ -th one as well, for all $j \geq i$.  This is an easy consequence of the Spence-Mirrlees condition when types are one-dimensional, but does not hold in general in higher dimensions. 

We show that, under our conditions, solutions may often be found in closed form, and, when this is not possible, they can be found extremely easily numerically.  We also develop a uniqueness result; this is particularly notable, as  uniqueness of solutions in multi-dimensional monopolist problems is a fairly delicate issue. Indeed, strict concavity of the problem (a useful tool for establishing uniqueness, if present) requires very strong conditions on $b$; in fact, these conditions essentially cannot hold in the unequal dimensional setting we work in \cite{Pass2012}.  We also show by an approximation argument that a nested solution to the continuous problem also exists, and closed form solutions can sometimes be obtained by discrete approximations as well.

We pause now briefly to discuss the connection between our work and other multi-to one-dimensional screening research.  Laffont-Maskin-Rochet \cite{laffont} solved an example with a particular preference function and distribution of agents characteristics. 
As was highlighted by Rochet-Chone \cite{RochetChone98}, a key insight uncovered by their solution is that while bunching is necessary, it is possible to aggregate the codimension 1 sets of agents choosing the same product, and then the solution in the new, aggregated one-dimensional type space solves a classical one-dimensional problem (see also Section 3 in McAfee and McMillan \cite{McAfee}).  The difficulty is that the aggregation process is endogenous.  In fact, as shown by one of the present authors, the aggregation can be chosen canonically \emph{only} when the preference function $b$ has an \emph{index} form, $b(x,y) = \tilde b(I(x),y)$ where $I:X \rightarrow \mathbb{R}$, in which case the problem really does reduce to a one-dimensional one \cite{Pass2012}; in our nomenclature here, solutions are automatically nested when $b$ has an index form, as shown in \cite{ChiapporiMcCannPass17}.  
Somewhat similarly, Deneckere and Severinov \cite{Deneckere} demonstrate that solutions can be found by solving a certain one-dimensional optimal control problem, with an endogenous distribution of goods, 
and develop techniques which can solve certain examples fairly explicitly. Seen in this light, our work identifies general conditions under which the aggregation has a particular special form and can therefore be found in a tractable way\footnote{In particular, when a continuum of products is available, the aggregation is continuous for nested models.  Perhaps more striking is that in general, the aggregation may not respect the order of the aggregated type space, albeit for a negligible set of agents: an agent type $x \in X\subset \mathbb{R}^m$ may be matched to two aggregated types, $t_\pm \in \mathbb{R}$, but \emph{not} to those  $t$ in between, $t_- <t<t_+$.  This cannot happen for nested models. }. Consequently, when nestedness is present (as is the case under the conditions we identify) solutions can be easily found, either analytically or via simple numerics, without resorting to solving partial differential equations and free boundary problems as in \cite{RochetChone98}, or leaning on the complex calculations in \cite{Deneckere}.  

We also note that, in order to keep our arguments as manageable as possible, we work under various simplifying hypotheses; types are two-dimensional, and preferences are linear in types  -- see Section \ref{sect: nestedness in semi-discrete monopolist's}).  Even with the present assumptions, our proofs are fairly involved technically. However, the notion of discrete nestedness makes perfect sense more generally, and we believe our approach may prove useful in the future in other situations as well, provided the allocation space remains one-dimensional.

The manuscript is organized as follows.  In the next section, we provide a precise formulation of the monopolist's problem we will study and introduce a semi-discrete analogue of the notion of nestedness introduced in \cite{ChiapporiMcCannPass17} for the monopolist's problem.  In Section 3, we focus on the monopolist's problem with a two-dimensional set of consumers and a finite set of goods, chosen from a one-dimensional continuum.  We introduce the (somewhat technical) assumptions we will need, state our main result,  illustrate the role of our assumptions with a couple of examples, and then present as well as briefly discuss several intermediate results which are vital ingredients in the proof of nestedness (the proof itself is in an appendix), but, we believe, are also of interest in their own right.  In Section 4  we present an alternate, but closely related, characterization of solutions in the semi-discrete context, which allows us to find closed form solutions in certain cases, and compute solutions very efficiently in others.  
It also allows us to prove a uniqueness result, which we present in Appendix~\ref{uniqueness}. Section 5  extends  our main result to a continuum of products. The connection between the monopolist's problem and optimal transport, which underlies our proofs, is presented in Appendix~\ref{connection to ot}.

As mentioned above, proofs of results stated in the body of the paper are relegated to Appendix B.

\section{Formulation of the monopolist's problem}
Consider a monopolist who produces products $y \in Y\subset \mathbb{R}^n$ with $n$-dimensional qualities. She deals with an $m$-dimensional set of agents $X\subset \mathbb{R}^m$ whose relative frequency is given by an absolutely continuous (with respect to Lebesgue measure) probability measure $\mu(x)$ with density $f(x)$. Let $c(y)$ be the cost of production of product $y,$ and the function $b(x,y)$ represent the preference of agent $x$ for product $y.$ For every pricing function $v\,:\, Y\to\,[0,\infty)$ that the monopolist puts on the products ($v(y)$ is the price of product $y$) we assume that the agents will choose an optimal choice of product $y^*(x)$ that maximizes their utility $b(x,y)-v(y).$ Then we define $u(x)=\max_{y\in Y} \,b(x,y)-v(y)=b(x,y^*(x))-v(y^*(x))$ to be the payoff function of agent $x.$ Under the generalized Spence-Mirlees condition \cite{Santambrogio} on $b$ (that is, injectivity of $y \mapsto D_xb(x,y)$ for each fixed $x$),  it is well known that there is exactly one $y:=y^*(x)$ that maximizes $b(x,y)-v(y)$ for almost every $x,$ and that the function $y^*$ is uniquely determined from $u$, $\mu$ almost everywhere. The agents can also choose to opt out, meaning they choose to not purchase any product. This is captured by an opt-out good $y_0$ which the monopolist produces for $0$ cost, $c(y_0)=0$, and cannot charge for, so that the pricing function is required to assign $v(y_0)=0.$ This implies that $u(x)\ge b(x,y_0)-v(y_0)=b(x,y_0).$ Now, for each agent of type $x,$ the monopolist's profit from this buyer is $v(y^*(x))-c(y^*(x))=b(x,y^*(x))-u(x)-c(y^*(x))$ and her total profits can be written as

\[\mathcal{P}(u)=\int_X (b(x,y^*(x))-u(x)-c(y^*(x)))f(x)dx.\] Hence, we define the monopolist's problem as follows,
\begin{equation}\label{PA}
\max_{u\in\mathcal{U},\, u\ge b(\cdot,y_0)}\mathcal{P}(u)
\end{equation}
where $\mathcal{U}:=\{u: X \rightarrow \mathbb{R}\,: \, u(x)=\max_{y\in Y} \,b(x,y)-v(y),\,\text{ for some function } v \}.$ 
\noindent 
For any $u \in \mathcal{U}$, one can associate a pricing function $v$ defined by $v(y)=\max_{x\in X}\{ b(x,y)-u(x)\},$ which yields the same profit in \eqref{PA}.

An important special case occurs when $m=n$ and $b(x,y)=x\cdot y.$ In this case, the optimal choice of product for agent $x$ is  $y^*(x)=Du(x)$, the gradient of $u$, %
which implies the condition that $Du(x)\in Y$  for all $x\in X.$ If the opt-out option is given by $y_0=0$, we can rewrite the monopolist's problem as follows
\[\max_{u\in\mathcal{U},\, u\ge0}\int_X (x\cdot Du(x)-u(x)-c(Du(x)))f(x)dx,\]
and the set $\mathcal{U}$ becomes the set of convex functions defined on $X,$ such that $Du\in Y.$

Another special case is when both $X$ and $Y$ are one-dimensional and $b$ satisfies the Spence-Mirrlees condition $\frac{\partial^2b}{\partial x \partial y} >0$; here, it is well known that, for any price schedule $v(y)$, the consumers' choice function $x \mapsto y^*(x) \in \AM_y [b(x,y)-v(y)]$ is monotone increasing~\cite{basov-book}.  This property can be expressed in various ways.  When $Y=\{y_0,y_1,...,y_N\}$ is discrete, one  way is that while a consumer $x$ may be indifferent between two \emph{adjacent} goods, $y_i, y_{i+1} \in \AM_y [b(x,y)-v(y)]$, they will never be indifferent between \emph{non-adjacent} goods; ie, if $|i-j| >1$, we cannot have $y_i, y_{j} \in \AM_y [b(x,y)-v(y)]$ (unless $y_k \notin \AM_y [b(x,y)-v(y)]$ for all $x$ and any $i<k<j$, in which case these $y_k$ can be neglected).  

%

Our interest here is largely in understanding how and when monotonicity generalizes in an appropriate sense to higher dimensional $X$ (with $Y$ still one-dimensional).   We will be interested in the case where $X\subseteq \mathbb{R}^2$, $Y \subseteq \mathbb{R}$ parametrizes a curve, $z(y)=(y,F(y))$, or a finite set of points along a curve, in $\mathbb{R}^2$, and $b(x,y)=x\cdot z(y).$

The notion of \emph{nestedness}, which can in some sense be understood as such a generalization, was introduced by the second named author, together with Chiappori and McCann \cite{ChiapporiMcCannPass17}, for optimal transport (or, equivalently, matching with transferable utility) problems between continuous measures on $X \subset \mathbb{R}^m$ and $Y \subset \mathbb{R}$.  
We will adapt this notion to the monopolist's problem and also develop a new formulation of nestedness which applies when the target space $Y$ is discrete.

When $Y=Y_N$ is discrete, we define the discrete level and sub-level sets as follows:
\begin{eqnarray}\label{eqn: discrete level sets} X_{=}^N(y_i,k):=&\{x\in X\,:\, b(x,y_{i+1})-b(x,y_{i})=k\},\\
X_{\le}^N(y_i,k):=&\{x\in X\,:\, b(x,y_{i+1})-b(x,y_{i})\le k\},\nonumber
\end{eqnarray}
and set $X_{<}^N(y_i,k):=X_{\le}^N(y_i,k)\setminus X_{=}^N(y_i,k).$ 
\begin{definition}
    We say that $u\in\mathcal{U}$ is  discretely nested  if 
    \[ X_{\le}^N(y_{i},v_{i+1}-v_i)\subseteq X_{<}^N(y_{j},v_{j+1}-v_j),\] 
     for all $i<j$   where $v_r=v(y_r)=\max_{x\in X}\{b(x,y_r)-u(x)\}$.
\end{definition}

\noindent 
Thus, the discrete nestedness condition ensures a consistent ordering of preferences: 
agents who prefer $y_i$ to $y_{i+1}$ (meaning $b(x,y_i)-v_i\ge b(x,y_{i+1})-v_{i+1}$) must also prefer each subsequent product $y_j$ 
to its successor $y_{j+1}$ for all indices $j>i$. 
 For a general pricing plan $v$, the set $X_i$ of agents choosing good $y_i$\footnote{More precisely, $X_i$ is the set of agents \emph{potentially} choosing $y_i$, since if equality $b(x,y_i) -v_i = b(x,y_j) -v_j$ holds, agent $x$ is indifferent between goods $y_i$ and $y_j$, and may choose either.  Under suitable assumptions on $b$ and $\mu$ the set of agents who are indifferent will be $\mu$ negligible.} is
$$
X_i = \{x \in X: b(x,y_i) -v_i \geq b(x,y_j) -v_j \text{ for all }j=0,1,...N\}.
$$
The structure of the sets $X_i$ and how they fit together may in general be very complicated (see Figure~\ref{fig:nonnested} for an example of what these regions could look like), as to determine $X_i$ one must compare $b(x,y_i) -v_i$ to each of the $N$ other  $b(x,y_j) -v_j$.  On the other hand, \emph{if} nestedness holds, we have 
\begin{eqnarray*}
    X_i &= &X_\leq^N(y_i,v_{i+1}-v_i) \setminus X_<^N(y_{i-1},v_{i}-v_{i-1})\\
    &=&\{x \in X: b(x,y_i) -v_i \geq b(x,y_j) -v_j \text{ for }j=i-1,i+1\}
\end{eqnarray*}
which can be identified by comparing $b(x,y_i) -v_i$ \emph{only} to $b(x,y_{i-1}) -v_{i-1}$ and $b(x,y_{i+1}) -v_{i+1}$~(see Figure~\ref{fig:nested}).



\begin{figure}[htbp]
  \centering
  \begin{subfigure}{0.48\linewidth}
    \centering
    \includegraphics[width=\linewidth]{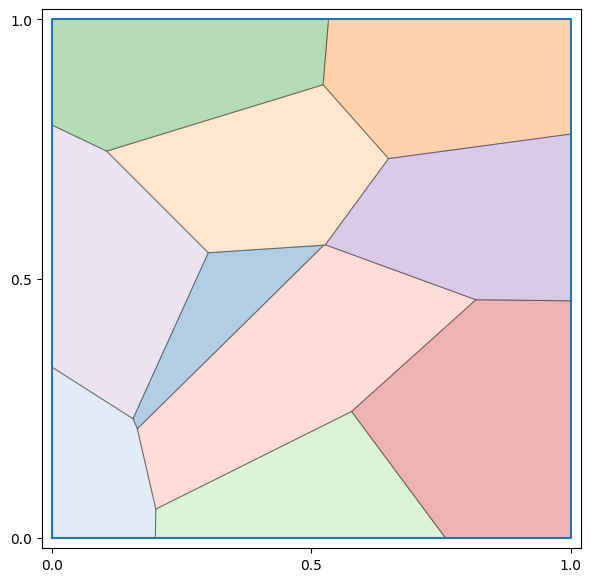}
    \caption{Illustration of the regions $X_i$ for a non-nested $u\in\mathcal{U}$.}
    \label{fig:nonnested}
  \end{subfigure}\hfill
  \begin{subfigure}{0.48\linewidth}
    \centering
    \includegraphics[width=\linewidth]{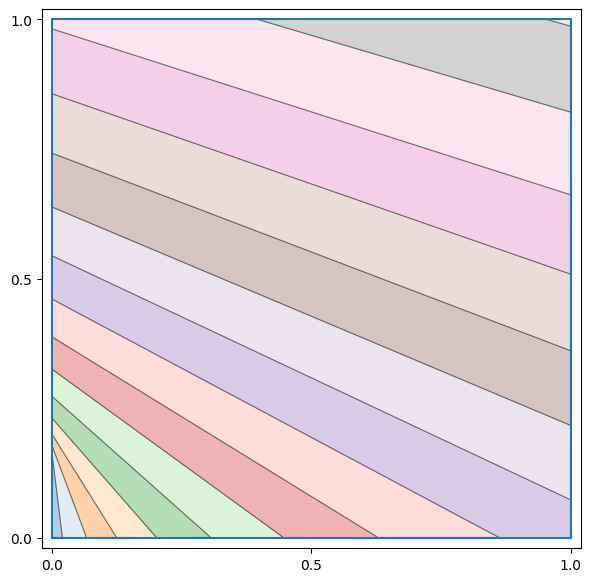}
    \caption{Illustration of the regions $X_i$ for a nested $u\in\mathcal{U}$.}
    \label{fig:nested}
  \end{subfigure}
  \caption{Comparison of regions $X_i$ for non-nested and nested $u\in\mathcal{U}$.}
  \label{fig:nested-comparison}
\end{figure}

\section{Nestedness of solutions the semi-discrete monopolist's problem}\label{sect: nestedness in semi-discrete monopolist's}
We are now ready to turn our attention to the structure of solutions to the monopolist's problem when the set $Y$ of available goods is finite with a one-dimensional structure.  We work in the particular setting described below.

Let $X=(0,1)^2$ and let $Y=[0,\Tilde{y}]$ and $z:[0,\Tilde{y}]\mapsto \mathbb{R}^2$ be the parametrization $z(y)=(y,F(y))$ for some $\Tilde{y}>0,$ where $F$ is an increasing convex function, and $F(0)=0.$ Let $Y_N=\{ y_i:\, 0\le y_i<y_{i+1}\le \Tilde{y} \text{ for  } 0\le i \le N\},$ where $y_0=0$, be a finite subset of $Y$, set $b(x,y)=x\cdot z(y),$ and let $y \mapsto c(z(y))$ be an increasing, convex cost function in $y$ such that $c(0)=0$. Let $\mu$ be a probability measure on $X$ with density function $f$ such that $\alpha\le f\le \|f\|_\infty$ for some $\alpha>0,$ with a bounded gradient, $Df:=(f_{x_1},f_{x_2}) \in L^{\infty}([0,1]^2)$. 


\begin{example}
 To illustrate the model, consider a monopolist manufacturing wool hats, differing across two qualities: their warmth, $z_1$ and durability $z_2$. These two qualities are modeled independently: a consumer might strongly prefer a very warm hat but care little about how long it lasts, while another consumer might prefer a moderately warm hat but place high importance on durability.
Consumers are represented by types $x=(x_1,x_2)\in X$, where $x_1$ measures how much a consumer values increased warmth, and $x_2$ measures the importance placed on durability; their preference function is then $b(x,z) = x_1z_1+x_2z_2.$
Now, it is reasonable to assume that both warmth and durability are actually determined  by the \emph{quality} $y$ of the wool used to manufacture the hats, as increasing functions $z_1(y),z_2(y)$.  Reparametrizing so that the warmth $z_1(y) =y$, and setting $z_2(y)=F(y)$ then leads to the preference function $b(x,y) =x \cdot (y,F(y))$.  If the manufacturer only has access to several fixed grades of wool, $y_1,...,y_N$, we recover a model of the form described above. 
\end{example}

Consider the following  technical assumptions on $c$, $\mu$ and $F:$ 
For all $z_k=z(y_k),\,\,z_i=z(y_i),\,z_j=z(y_j)$ such that $k<  i< j,$ 
\begin{enumerate}[label={(H\arabic*)}]
\item \label{condition2} $ \frac{c(z_j)-c(z_i)}{F(y_j)-F(y_i)}>\frac{c(z_i)-c(z_k)}{F(y_i)-F(y_k)},$\vspace{3pt}
  \item \label{condition3} $\frac{\frac{c(z_{i+1})-c(z_i)}{y_{i+1}-y_i}-\frac{c(z_i)-c(z_
  {i-1})}{y_i-y_{i-1}}}{\frac{F(y_{i+1})-F(y_i)}{y_{i+1}-y_i}-\frac{F(y_i)-F(y_{i-1})}{y_i-y_{i-1}}}>\frac{\frac{3}{2}\|f\|_\infty+\frac{\|f_{x_1}\|_\infty}{2}\Big(1+\frac{F(y_{i+1})-F(y_i)}{y_{i+1}-y_i} +\frac{c(z_{i+1})-c(z_i)}{y_{i+1}-y_i}\Big)}{\alpha},$
  
   \item \label{condition4} 
   \[\begin{array}{ll}
   \frac{F(y_{i+1})-F(y_{i})}{y_{i+1}-y_{i}}<&  \frac{2\alpha}{\|f\|_\infty\Big(2+\frac{\frac{F(y_{i+1})-F(y_{i})}{y_{i+1}-y_{i}}}{\frac{F(y_i)-F(y_{i-1})}{y_i-y_{i-1}}}\Big)+\|f_{x_2}\|_\infty\Big(1+2\frac{y_{i}-y_{i-1}}{F(y_{i})-F(y_{i-1})}+\frac{c(z_{i+1})-c(z_i)}{F(y_{i+1})-F(y_i)}\Big)}\times\\
   &\Bigg(1+\frac{\frac{c(z_{i+1})-c(z_i)}{F(y_{i+1})-F(y_i)}-\frac{c(z_{i})-c(z_{i-1})}{F(y_i)-F(y_{i-1})}}{\frac{y_{i}-y_{i-1}}{F(y_{i})-F(y_{i-1})}-\frac{y_{i+1}-y_i}{F(y_{i+1})-F(y_{i})}}\Bigg).
   \end{array}\]

\end{enumerate}
The first assumption expresses that $c$ is more convex than $F$, while the second locally quantifies the difference in convexity between $c$ and $F$ in terms of various quantities of interest in the problem. The third is a local bound on the derivative of $F$.
\begin{example}\label{ex1}
 If $c(z)=\frac{|z|^2}{2},$ $|z_{i+1}-z_i|=\frac{1}{N}$ and  $F(y)=Ay^2$ such that $A<\frac{1}{3},$  then $c$ and $F$ satisfy \ref{condition2}--\ref{condition4} when $f=1$ and $N>3.$  The calculation proving this is provided in an appendix.

\end{example}


Our main result on the semi-discrete monopolist's problem is the following.
 \begin{theorem}\label{disc}
    Under the hypotheses \ref{condition2}--\ref{condition4}, any solution $u\in \mathcal{U}$ of the monopolist's problem with data $(\mu, Y_N,c)$ is discretely nested. 
 \end{theorem}  
While conditions \ref{condition2} -- \ref{condition4} appear complicated, we stress that \emph{some} hypotheses are necessary to ensure nestedness of the solution, as the following example confirms. 
\begin{example}\label{ex2}
    In the case of a uniform measure $\mu$, \emph{if} the solution is discretely  nested, it can in fact be determined explicitly; this is shown in Section~\ref{subsect: explicit solution}.

 Consider now  Example~\ref{ex1} with $A = \frac{1}{2.9}$; it is not hard to show that  \ref{condition3} \emph{fails} for large enough $N$, and we claim that, in fact, nestedness of the solution fails as well. In Section \ref{subsect: explicit solution}, we attempt to evaluate the solution under the assumption of discrete nestedness, using the explicit solution mentioned above.  
The resulting structure is \emph{not} discretely nested, showing that the explicit construction fails to yield the solution to the problem and implying that the solution itself must violate discrete nestedness (see equation \eqref{eqn: non-nested example} below).

\end{example}

\subsection{Significant intermediate results}  
The proof of Theorem \ref{disc} is fairly long.  It is divided into several intermediate results and lemmas.  All proofs are relegated to the appendix; however, we state here several of the intermediate results which we believe are of independent interest, and briefly explain their significance.

  
   The first of these is Theorem \ref{between}, which expresses that, for the optimal pricing plan, the set of goods which are actually produced and purchased by some consumer is consecutive.
   In what follows, for $u\in\mathcal{U}$ we  note that, up to negligible sets, $X_i=\{x\in X: Du(x)=z_i=z(y_i)\}$ for  $0\le i\le N$ (recall that $X_i$ corresponds to the consumers buying product $y_i$).

   \begin{theorem}[Purchased goods are consecutive]\label{between}
     Assume that $c$ and $F$ satisfy  \ref{condition2} and let $u$ be a solution of the monopolist's problem with data $(\mu, Y_N,c).$ Then, if $y_k$ and $y_j$ are 2 products such that $\mu( X_k)$ and $\mu( X_j)$ are positive, then $\mu( X_p)$ is positive for all $k\le p\le j.$ 
\end{theorem}
 The proof of this result itself will require several lemmas, most of which are developed in the appendix.  We do present one of them here, stating that all purchased goods are purchased by consumers on the bottom or right hand side of the boundary. 
\begin{lem}\label{partition}
    Let $u\in \mathcal{U},$ and let $(y_{i_k})$  be the products with $\mu(X_{i_k})>0$  such that $0\le i_k<i_{k+1}\le N$ for all $k.$ Then,
    \begin{enumerate}
      \item $\overline{X_{i_k}}\cap \Big(([0,1]\times\{0\}) \cup(\{1\}\times[0,1])\Big)\neq\emptyset.$
     \item $\overline{X_{i_{k+1}}}\cap\overline{X_{i_k}}\cap \Big(([0,1]\times\{0\}) \cup(\{1\}\times[0,1])\Big) \neq \emptyset.$
  
    \end{enumerate}
\end{lem}

The key property behind nestedness is that the \emph{indifference curves} $\overline X_i \cap \overline X_j=\{x\in \overline{X}: u(x)=b(x,y_i)-v_i=b(x,y_j)-v_j\}$ cannot intersect each other, which the following result asserts.

\begin{theorem}[Indifference curves cannot intersect at optimality]\label{thm: no intersection}
    Under conditions \ref{condition2}--\ref{condition4}, no two indifference curves arising from an optimal $u$ can intersect within $\overline{X}$.
\end{theorem}

\section{An alternate characterization of discretely nested solutions}
In this section, we offer an alternate characterization of solutions of the semi-discrete problem, assuming discrete nestedness, in terms of the points where the indifference curves intersect the upper part of the boundary.  This formulation has several advantages.  First, it allows us to establish uniqueness of the solution, under additional hypotheses (Appendix~\ref{uniqueness}).  Second, the first order conditions in these new variables are quite simple, and in some cases (such as when $\mu$ is uniform) can even be solved explicitly.  Finally, even when explicit solutions are not possible, solving these first order conditions yields a simple and efficient numerical method, which we develop and illustrate in the following section.

In what follows, assume that $c(z_1)>F(y_1),$ in addition to the assumptions laid out in Section \ref{sect: nestedness in semi-discrete monopolist's}. From the discrete nestedness of the solution, the monopolist's problem is equivalent to maximizing the profit function $\mathcal{P}$ over convex  utility functions $u\ge0$ such that $u$ is discretely nested.
In this case, each indifference segment intersects either $\{0\}\times[0,1]$ or $[0,1]\times\{1\}.$\footnote{In fact, the intersection points are all in $[0,1]\times\{1\},$ as Lemma \ref{upper_segment} below asserts.  However, the proof of this fact actually relies on the formulation below allowing for intersection points with $\{0\}\times[0,1]$ as well, so it is necessary to develop this formulation as well.} We parametrize $\{0\}\times[0,1]\cup[0,1]\times\{1\}$ by $x:[0,2]\to\{0\}\times[0,1]\cup[0,1]\times\{1\}$ where $x(t)=(0,t)$ if $t\in[0,1]$ and $x(t)=(t-1,1)$ if $t\in[1,2].$ From this we can write the prices in terms of the points $x(t_i)$ where the indifference curves intersect this portion of the boundary. The  indifference segment between $ X_i$ and $ X_{i+1},$ satisfies
\[x(t_i)\cdot z_i-v_i=x(t_i)\cdot z_{i+1}-v_{i+1}\]
which implies that $v_{i+1}=x(t_i)\cdot(z_{i+1}-z_i)+v_i$ and by induction and the fact that $v_0=0,$ we get 
\[v_i=\sum_{k=0}^{i-1}(x(t_{k})\cdot (z_{k+1}-z_{k})).\]

Now we can write the profit function as
\[\mathcal{P}(t_0,\dots,t_{N-1})=\sum_{i=1}^{N-1}(v_i-c(z_i))\mu( X_i)=\sum_{i=1}^{N-1}(\sum_{k=0}^{i-1}(x(t_{k})\cdot (z_{k+1}-z_{k}))-c(z_i))\mu( X_i).\]
\begin{lem}\label{upper_segment}
The upper  intersection points    $(x(\overline{t_i}))$ between the indifference segments of the solution and $\partial X$  are all in $[0,1]\times\{1\}.$ 
\end{lem}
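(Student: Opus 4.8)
The plan is to rule out the possibility that an indifference segment between two adjacent regions $X_i$ and $X_{i+1}$ of the (discretely nested) optimal solution hits the left edge $\{0\}\times[0,1]$ rather than the top edge $[0,1]\times\{1\}$. First I would recall the geometry: since $b(x,z)=x\cdot z$ with $z_i=(y_i,F(y_i))$ and $y_i<y_{i+1}$, the indifference set $\{x: x\cdot z_i - v_i = x\cdot z_{i+1}-v_{i+1}\}$ is a straight line with slope $-\frac{y_{i+1}-y_i}{F(y_{i+1})-F(y_i)}<0$ in the $(x_1,x_2)$-plane, and $Du(x)=z_i$ on the side of this line where $x\cdot z_i - v_i$ is larger; because $y$ and $F(y)$ are both increasing, the region $X_i$ (smaller good) lies to the lower-left and $X_{i+1}$ (larger good) to the upper-right. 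By Corollary \ref{do not intersect} (indifference curves do not intersect in $X$) and Theorem \ref{between} (purchased goods are consecutive), the regions $X_0,X_1,\dots,X_N$ are ordered consecutively, separated by these parallel-in-orientation, non-crossing segments, each of which must therefore run from one point of $\partial X$ to another.

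The core step is to show that no such segment can have an endpoint on $\{0\}\times[0,1]$. Suppose for contradiction that the segment separating $X_i$ from $X_{i+1}$ meets $\{0\}\times[0,1]$ at a point $(0,s)$. Then every agent on the left edge below $(0,s)$ buys $z_j$ for some $j\le i$, and in particular the corner $(0,0)$ buys some good $z_j$ with $j\le i$. I would then exploit the participation/incentive structure near the corner $(0,0)$: for the agent $x=(0,0)$ we have $u(0,0)=\max_j \big(0\cdot z_j - v_j\big)=\max_j(-v_j)$, and since $v_0=0$ and the $v_j$ are nonnegative (prices are nonnegative, being built up as $v_{i+1}=x(t_i)\cdot(z_{i+1}-z_i)+v_i$ with $x(t_i)$ having nonnegative coordinates and $z_{k+1}-z_k$ having positive coordinates), the maximum is attained at $j=0$. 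Hence $(0,0)\in X_0$, i.e. $Du(0,0)=z_0$. The same argument applies to a whole neighborhood of $(0,0)$ within $X$: for $x$ near $(0,0)$, $x\cdot z_0 - v_0 = x\cdot z_0 > x\cdot z_j - v_j$ once $v_j>0$ is bounded below away from zero, which holds for all $j\ge 1$ using $c(z_1)>F(y_1)$ together with the price formula — here is where I expect the computation to require care, so I would isolate the inequality $v_j \ge v_1 \ge$ (something positive) as a sublemma. This forces the segment between $X_0$ and $X_1$ (and hence the one between $X_i,X_{i+1}$, which lies "after" it) to stay away from the corner $(0,0)$; combined with the fact that the segments are straight lines of fixed negative slope, the only way the $X_i/X_{i+1}$ segment can separate $X_i$ (which, by the ordering and Lemma \ref{partition}, must touch the bottom-or-right boundary) from the rest is for it to exit through the top edge.

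More precisely, the clean way to finish is: by Lemma \ref{partition}(1) each purchased region $X_{i_k}$ touches $([0,1]\times\{0\})\cup(\{1\}\times[0,1])$, and by Lemma \ref{partition}(2) consecutive purchased regions share a boundary point on that bottom-right part of $\partial X$. Since the separating indifference segments are disjoint straight lines of negative slope, and they are ordered so that $X_i$ is below-left of $X_{i+1}$, a segment hitting the left edge $\{0\}\times[0,1]$ at height $s$ would force the region $X_{i+1}$ (and all later ones) to be disjoint from a neighborhood of the left edge below height $s$; but then the agents at $(0,0)$ — who we showed buy $z_0$ — and more generally the left edge must all lie in the early regions, which is consistent, and the contradiction instead comes from the \emph{top}: the region $X_N$ (largest good) must touch the bottom-right boundary by Lemma \ref{partition}(1), yet the chain of non-crossing negative-slope segments emanating from the left edge would wall $X_N$ off into the top-left, contradicting that. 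I expect the main obstacle to be making this topological "walling off" argument rigorous — i.e. showing that once one indifference segment has an endpoint on the left edge, the consecutive ordering plus non-crossing plus the common-boundary-point property of Lemma \ref{partition}(2) force a configuration incompatible with all of $X_0,\dots,X_N$ having nonempty intersection with the bottom-or-right boundary; I would handle this by an induction on $i$, tracking at each stage which portion of $\partial X$ the boundary of $\bigcup_{k\le i}X_k$ occupies, and using the fixed sign of the segment slopes to control the direction in which the free boundary propagates.
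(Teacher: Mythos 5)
There is a genuine gap here: the lemma is not a structural/topological fact, and your ``walling off'' argument cannot succeed. A configuration in which the indifference segment between $X_i$ and $X_{i+1}$ has its upper endpoint at $(0,s)$ on the left edge and runs down-right to the bottom or right edge is perfectly compatible with everything you invoke: the segments are still non-crossing, the regions are still consecutively ordered with $X_0$ at the lower-left corner and $X_N$ containing $(1,1)$, and every $X_k$ still meets $([0,1]\times\{0\})\cup(\{1\}\times[0,1])$ as Lemma \ref{partition} requires. Nothing gets trapped in the top-left; the regions simply become nested wedges emanating from the origin. So no contradiction with Lemma \ref{partition}, Theorem \ref{between}, or Corollary \ref{do not intersect} is available, and your observation that $(0,0)\in X_0$ (true, and essentially the first assertion of Lemma \ref{m}) does not advance the argument.

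What actually rules out left-edge intersections is \emph{optimality of the $t_i$'s}, quantitatively. The paper's proof parametrizes the candidate intersection point on the left edge as $x(\overline{t_i})=(0,\overline{t_i})$ with $\overline{t_i}<1$, imposes the first-order condition $\frac{\partial \mathcal P}{\partial t_i}\big|_{t_i=\overline{t_i}}=0$ for the profit functional \eqref{eqn: definition of P(t)}, and solves to get
\[
\overline{t_i}\;\ge\;\frac{c(z_{i+1})-c(z_i)}{F(y_{i+1})-F(y_i)}\;\ge\;\frac{c(z_1)-c(z_0)}{F(y_1)-F(y_0)}=\frac{c(z_1)}{F(y_1)}>1,
\]
using \ref{condition2} (monotonicity of the difference quotients of $c$ against $F$) and the standing hypothesis $c(z_1)>F(y_1)$ from Section 5. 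This contradicts $\overline{t_i}<1$. Your proposal never uses the first-order conditions in $t_i$, and uses $c(z_1)>F(y_1)$ only incidentally to bound prices away from zero; but without a quantitative input of this kind the statement can fail (if production were cheap relative to $F$, the monopolist would push the indifference lines toward the left edge), so no purely combinatorial or topological argument can prove the lemma.
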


Due to this lemma, we can redefine the parametrization $\overline{x}:\,[0,1]\to [0,1]\times\{1\}$ where $\overline{x}(t)=(t,1)$ and so the profit function becomes 
\begin{equation}\label{eqn: definition of P(t)}
    \mathcal{P}(t_0,\dots,t_{N-1})=\sum_{i=1}^{N-1}(v_i-c(z_i))\mu( X_i)=\sum_{i=1}^{N-1}(\sum_{k=0}^{i-1}(\overline{x}(t_{k})\cdot (z_{k+1}-z_{k}))-c(z_i))\mu( X_i).
\end{equation}

The next lemma characterizes those goods which are produced at optimality as exactly those goods $y_i$ which the highest end consumer $x=(1,1)$ prefers to the next highest good $y_{i-1}$ when both are offered at cost.
\begin{lem}\label{m} 
Let $u$ be a solution of \eqref{PA}.  Then $\mu(X_0)>0$ and, for $i \geq 1$, $\mu(X_i) >0$ if and only if 
\begin{equation}\label{eqn: preference at cost}
b((1,1),z_{i}) -b((1,1),z_{i-1}) -c(z_{i})+c(z_{i-1}) >0.
\end{equation}

\end{lem}
The first assertion of Lemma \ref{m} is a manifestation of the well known \emph{principle of exclusion} in multi-dimensional screening, identified by Armstrong \cite{Armstrong96}, although the proof in the current semi-discrete setting is much simpler.
Inequality \eqref{eqn: preference at cost} is equivalent to
$$
1+\frac{F(y_{i}) -F(y_{i-1}) }{y_i-y_{i-1}}-\frac{c(y_{i}) -c(y_{i-1}) }{y_i-y_{i-1}}>0.
$$
Since the left hand side is decreasing in $i$, the set of $i$ that satisfies it is consecutive, starting at $i=1$ and ending at some $M \leq N$, where $M$ is the largest index satisfying \eqref{eqn: preference at cost}.

Let $\mathcal{B}=\{(t_0,\dots,t_{M-1})\in (0,1)^M:\, 0\le t_i<t_{i+1} \text{ for all } 0\le i<M-1\}$ and we define $\mathcal{P}:\overline{\mathcal{B}}\mapsto [0,\infty)$ by \eqref{eqn: definition of P(t)}.

    Note that Theorem \ref{disc} and \ref{thm: no intersection} together with Lemmas \ref{upper_segment} and \ref{m},  imply the following 
\begin{theorem}\label{p(t)}
There exists $(\overline{t}_i)\in \mathcal{B}$ that maximizes $\mathcal{P}(t_0,\dots , t_{M-1})$, 
and the corresponding profit satisfies
\begin{equation}\label{Eqn: reformuation problem}
\mathcal{P}(\overline{t}_0,\dots, \overline{t}_{M-1})
=\max_{u\in \mathcal{U},\, u\ge 0}\mathcal{P}(u).
\end{equation}
\end{theorem}

\begin{remark}
    Theorem \ref{disc} implies that the maximizer $(\overline{t}_i)$ of $\mathcal{P}$ is in $\overline{\mathcal{B}}$ which means  $\overline{t}_i\le \overline{t}_{i+1}$ for all $0\le i<M-1.$ Theorem \ref{thm: no intersection} implies that $t_i<t_{i+1}$ and so $(\overline{t}_i)\in \mathcal{B}.$
\end{remark}

\subsection{Explicit solutions}\label{subsect: explicit solution}
 Note that the nested solution $u$ of the monopolist's problem is defined by
    \[u(x)=x\cdot z_i-\sum_{k=0}^{i-1}((\overline{t}_{k},1)\cdot (z_{k+1}-z_{k})),\]
    when $x\in X_i$ where
\vspace{5pt}

\resizebox{0.97\textwidth}{!}{$
\hspace{-15pt}X_i=\begin{cases}
    \Big\{(x_1,x_2)\in X:\,x_2< -\frac{y_{i+1}-y_{i}}{F(y_{i+1})-F(y_{i})}(x_1-\overline{t}_{i})+1\Big\}& \text{if } i=0,\\
    \Big\{(x_1,x_2)\in X:\, -\frac{y_i-y_{i-1}}{F(y_i)-F(y_{i-1})}(x_1-\overline{t}_{i-1})+1<x_2<-\frac{y_{i+1}-y_{i}}{F(y_{i+1})-F(y_{i})}(x_1-\overline{t}_{i})+1\Big\}&\text{if } 0<i<M-1,\\
    \Big\{(x_1,x_2)\in X:\, x_2>-\frac{y_{M}-y_{M-1}}{F(y_{M})-F(y_{M-1})}(x_1-\overline{t}_{M-1})+1\Big\}& \text{if } i=M-1,
\end{cases}
$}
\vspace{5pt}    

and $(\overline{t}_i)_{i=0}^{M-1}$ is a maximizer of the function $\mathcal{P}(t_0,\dots,t_{M-1}).$

Theorem \ref{p(t)} implies that solving the monopolist's problem under the assumptions in this paper boils down to finding the root $\bar t_i$ of the derivative of $\mathcal{P}$ with respect to each $t_i$. It is straightforward to see that these equations decouple from each other; that is, each $\frac{\partial \mathcal{P}}{\partial t_i}$ depends \emph{only} on $t_i$ and not on $t_j,j\neq i$ (the explicit calculation is done in Appendix~\ref{uniqueness}). 
These equations can therefore each be solved independently, making the problem considerably more tractable.  In certain cases it can be solved in closed form.  Indeed, if $f(x) =1$, so that $\mu$ is the uniform measure, we get:

\begin{equation}\label{eqn: explicit t values}
t_i^N=\frac{1}{2}-\frac{3}{4}\frac{F(y_{i+1})-F(y_i)}{y_{i+1}-y_i}+\frac{1}{2}\frac{c(z_{i+1})-c(z_i)}{y_{i+1}-y_i},\end{equation}
 if $t_i^N+\frac{F(y_{i+1})-F(y_i)}{y_{i+1}-y_i}<1,$ otherwise,
 \begin{equation}\label{eqn: explicit t values 2}
 t_i^N=\frac{1}{3}-\frac{2}{3}\frac{F(y_{i+1})-F(y_i)-(c(z_{i+1})-c(z_i))}{y_{i+1}-y_i}.\end{equation}

These solutions for various choices of $N$, F and $c$ are illustrated in Figures \ref{unif} \ref{unif2} and \ref{nonnested}.  Note that the conditions ensuring nestedness described in Example \ref{ex1} are \emph{satisfied} in Figures \ref{unif} and \ref{unif2}, but \emph{fail} in Figure \ref{nonnested}.  Therefore, Figures \ref{unif} and \ref{unif2} depict exact solutions to the monopolist's problem.  On the other hand, \emph{if} the solution was nested for the $N$, $F$ and $c$ in Figure \ref{nonnested}, Theorem \ref{p(t)} would imply that the solution be given by \eqref{eqn: explicit t values} and \eqref{eqn: explicit t values 2}.  However, these choices of $t$ do not result in a nested structure (note the intersecting level curves in Figure \ref{nonnested}).  Therefore, the solution for the choices of $N$, F and $c$ in Figure \ref{nonnested} cannot be nested.  Similar reasoning applies to Example \ref{ex2}, taking $N=20$ and $A=\frac{1}{2.9}$. Using the computed values  
\begin{equation} \label{eqn: non-nested example}
(t_i^N) = (0.48582, 0.48525, 0.48502, 0.48522, 0.48591, \dots),
\end{equation}
we observe that the sequence is not monotonic: specifically, $ t_0^N > t_1^N $ and $ t_1^N > t_2^N $. This violates the monotonicity guaranteed in the nested case, and we therefore conclude that the solution in Example \ref{ex2} is not nested either.

\subsection{Numerical computation}\label{num}
Several numerical algorithms for screening problems have been developed in the literature, under different assumptions \cite{EkelandMorenoBromber10, Mirebeau16,CarlierDupuiRochetThanassoulis24}.  However, nestedness and the reformulation \eqref{Eqn: reformuation problem} leads to a  simpler computational scheme.
 
 Even when the roots of $\frac{\partial \mathcal P}{\partial t_i}$ 
 cannot be found by hand, the fact that the equations decouple (that is, $\frac{\partial \mathcal P}{\partial t_i}$ does not depend on $t_j$ for $j \neq i$) means they can be easily found  numerically; Theorem \ref{p(t)}  ensures that these roots correspond to the solution of the monopolist's problem.  We illustrate this by solving an example in Figure \ref{gaus}.  In general, we expect this approach, which amounts to solving $N$ independent one-dimensional equations, to be far more efficient than other methods whenever it is applicable. 

\begin{figure}[!h]
  \centering
  \begin{subfigure}[t]{0.48\textwidth}
    \centering
    \includegraphics[height=5cm]{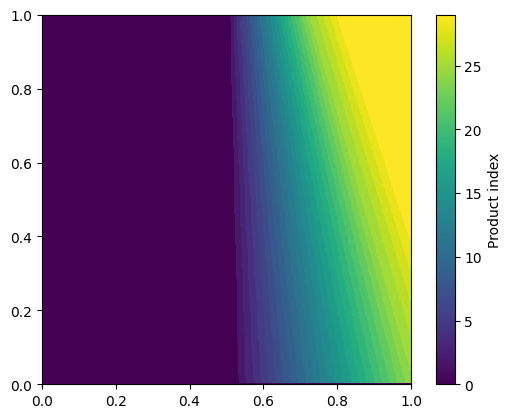}
    \caption{Regions $X_i$ for $N = 28$, $F(y) = \frac{y^2}{6}$, $c(z) = \frac{|z|^2}{2}$, $\mu$ uniform.}
    \label{unif}
  \end{subfigure}
  \hfill
  \begin{subfigure}[t]{0.48\textwidth}
    \centering
    \includegraphics[height=5cm]{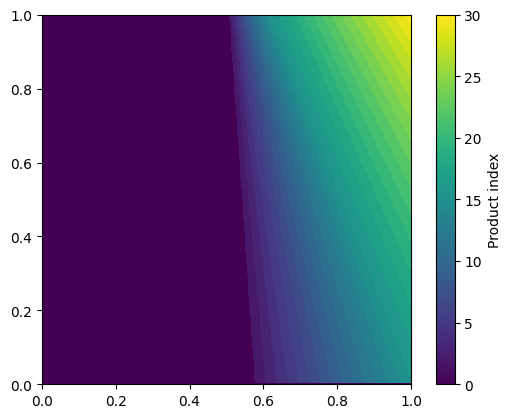}
    \caption{Regions $X_i$ for $N = 30$, $F(y) = \frac{y^2}{4}$, $|z_{i+1} - z_i| = \frac{1.4}{30}$, $c(z) = \frac{|z|^2}{2}$, $\mu$ uniform.}
    \label{unif2}
  \end{subfigure}

  \vspace{0.5cm}

  \begin{subfigure}[t]{0.48\textwidth}
    \centering
    \includegraphics[height=5cm]{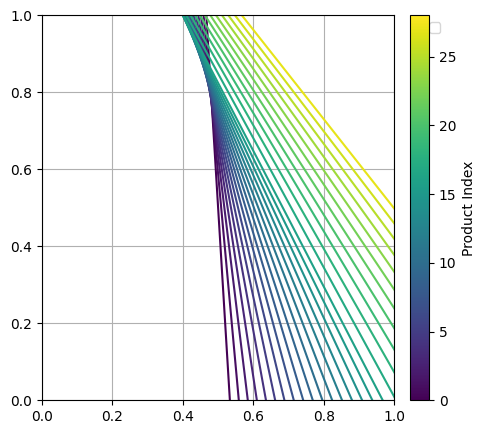}
    \caption{The level curves $X^N_=(y_i, v_{i+1}-v_i)$ with $N = 28$, $F(y) = \frac{y^2}{2}$, $c(z) = \frac{|z|^2}{2}$, $\mu$ uniform.}
    \label{nonnested}
  \end{subfigure}
  \hfill
  \begin{subfigure}[t]{0.48\textwidth}
    \centering
    \includegraphics[height=5cm]{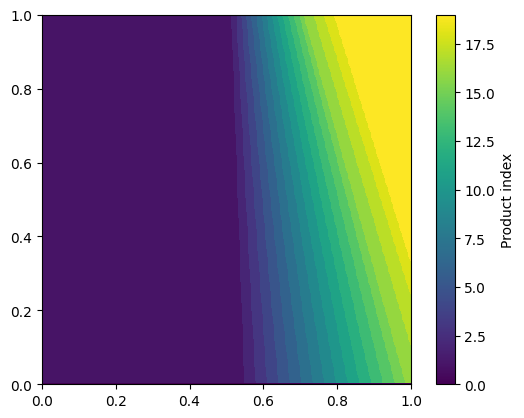}
    \caption{Regions $X_i$ for $N = 18$, $F(y) = \frac{y^2}{6}$, $c(z) = \frac{|z|^2}{2}$, $\mu$ Gaussian (normalized).}
    \label{gaus}
  \end{subfigure}

  \caption{Comparison of regions $X_i$ and indifference curves behavior under varying model parameters.}
\end{figure}

\section{Nestedness for a continuum of products}
We turn now to the case where the set of available products is continuous. When $Y\subset\mathbb{R}$ parametrizes a curve, we define nestedness of the monopolist's problem as follows:

The continuous analogues of the discrete level and sublevel sets \eqref{eqn: discrete level sets} are defined in terms of marginal preference functions:

\[X_{=}(y,k):=\Big\{x\in X\,:\, \frac{\partial b}{\partial y}(x,y)=k\Big\},
X_{\le}(y,k):=\Big\{x\in X\,:\, \frac{\partial b}{\partial y}(x,y)\le k\Big\},\]
and $X_{<}(y,k):=X_{\le}(y,k)\setminus X_{=}(y,k).$ 

 The continuous analogue of the discrete finite differences $v_{i+1}-v_i$ in price are the marginal differences $v'(y)$.  In general, however, note that pricing functions of the form $v(y)=\max_{x\in X}\{b(x,y)-u(x)\}$ may not be everywhere differentiable, but they are semi-convex, meaning that they have \textit{subdifferentials} everywhere (see, for instance,~\cite{Rockafellar1970}), and are in fact differentiable Lebesgue almost everywhere. 
We define $v'_+(y)$ and $v'_-(y)$ such that the subdifferential of $v$ at $y$ is  $\partial v(y)=[v'_-(y),v'_+(y)]$ where $v(y)=\max_{x\in X}\{b(x,y)-u(x)\}$ for some $u\in\mathcal{U}.$  Note that wherever $v$ is differentiable, $\partial v(y) =\{v'(y)\}$.

\begin{definition}
    We say that $u\in\mathcal{U}$ is nested  if 
    \[ X_{\le}(y,v'_+(y))\subseteq X_{<}(y',v'_-(y')),\] 
     for all $y<y'$   where  $v(s)=\max_{x\in X}\{b(x,s)-u(x)\}$.
\end{definition}
 \begin{remark}
The notion of nestedness in continuous optimal transport problems yields a very simple characterization of solutions, allowing problems to be solved in essentially closed form \cite{ChiapporiMcCannPass17} (this is reviewed in detail in Appendix \ref{connection to ot}, toegether with an analogous characterization in the discrete case).

In the context of the monopolist's problem with a continuum of goods considered here, in analogy with the discrete case, nestedness makes construction of the solution from the pricing function $v$ very simple.  Generally speaking, the envelope theorem implies that the set $X_y:=\{x: b(x,y) -v(y) \geq b(x,y')-v(y') \forall y' \in Y\}$ of consumers choosing good $y$ satisfies 
    $$
    X_y \subseteq \cup_{k \in \partial v(y)}X_=(y,k).
    $$
\end{remark}
For general pricing functions, this inclusion may be strict, making the reconstruction of each $X_y$ from $v$ complicated (as comparisons between $b(x,y) -v(y)$ and  with every other $b(x,y')-v(y')$ are required) .  However, the nestedness criterion, if present, ensures that each $x$ can be in $X_=(y,k) $ with $k \in \partial v(y)$ for \textit{only one} $y$; this implies the \textit{equality} $X_y = \cup_{k \in \partial v(y)}X_=(y,k) $.  It is therefore much simpler to find $X_y$ from $v$ (as construction the sets $X_=(y,k)$ for $k \in \partial v(y)$ requires knowledge of the behaviour of $v$ only at $y$). This, in turn, allows for a much simpler characterization of solutions to \eqref{PA}; see, for instance, the discussion below Corollary \ref{c1}.

The setting is  exactly as laid out in Section \ref{sect: nestedness in semi-discrete monopolist's}, except that the entire set $Y=[0,\Tilde{y}]$ of goods is available.

We will approximate the set $Y$ with the discrete set $Y_N$, where the points $z_i = (y_i,F(y_i))$ are equally spaced, so that the arclength of $\{(y,F(y)):\, y_i\le y\le y_{i+1}\}$ is $\frac{L}{N}$, where $L$ is the arclength of the curve $\{(y,F(y)):\, 0\le y\le\Tilde{y}\}$.
This approximation will be used to prove the main result of this section, which is as follows.
\begin{theorem}\label{cont.nestedness}
    Assume that for all $N$ large enough $f,$ $F$ and $c$ satisfy  \ref{condition2}--\ref{condition4}. Then there exists a nested solution to the monopolist's problem with data $(\mu, Y,c)$.
 \end{theorem}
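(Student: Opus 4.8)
The plan is to obtain the nested solution for the continuum problem as a limit of the discretely nested solutions $u_N$ for the approximating problems with data $(\mu, Y_N, c)$. First I would record the relevant compactness: each $u_N$ is the $b$-transform of a price schedule, hence $b$-convex, and since $Y \subset \mathbb{R}^2$ is bounded the functions $u_N$ are uniformly Lipschitz on $X = (0,1)^2$; they are also uniformly bounded (the opt-out constraint $u_N \ge b(\cdot,y_0) = 0$ gives a lower bound, and $u_N(x) = \max_i x\cdot z_i - v_N(y_i) \le \max_i x \cdot z_i$ together with nonnegativity of $v_N$ gives an upper bound, after checking $v_N \ge 0$, which follows since $y_0$ is always available at price $0$). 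So by Arzelà–Ascoli, along a subsequence $u_N \to u$ uniformly, with $u$ $b$-convex, $u \ge 0$, and (since $Y_N$ fills out $Y$ in Hausdorff distance) $u \in \mathcal U$ for the continuum problem, i.e. $u(x) = \max_{z \in Y} x\cdot z - v(z)$ for the limiting price schedule $v = u^{b}$.

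Next I would show that this limit $u$ is actually optimal for the continuum monopolist's problem. This is the standard $\Gamma$-convergence / stability argument for screening: the profit functional $\mathcal P$ is upper semicontinuous along $u_N \to u$ with $Du_N \to Du$ a.e. (the gradients of $b$-convex functions converge a.e. to the gradient of the limit), and conversely any competitor $\tilde u \in \mathcal U$ for the continuum problem can be approximated by admissible competitors $\tilde u_N$ for the $Y_N$-problems with $\mathcal P_N(\tilde u_N) \to \mathcal P(\tilde u)$ — because $\tilde u$'s optimal allocation $y^*(x) \in Y$ can be replaced by the nearest point of $Y_N$, changing $b(x,y^*(x)) - c(y^*(x))$ uniformly by $o(1)$ using continuity of $b$ and $c$ and $\max_{z\in Y}\operatorname{dist}(z,Y_N)\to 0$. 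Combining, $\mathcal P(u) \ge \limsup_N \mathcal P_N(u_N) \ge \limsup_N \mathcal P_N(\tilde u_N) = \mathcal P(\tilde u)$, so $u$ solves \eqref{PA} with data $(\mu, Y, c)$.

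Finally I would transfer discrete nestedness of each $u_N$ to nestedness of $u$. By Theorem \ref{disc} each $u_N$ is discretely nested, and by Corollary \ref{do not intersect} (equivalently Theorem \ref{thm: no intersection}) its indifference curves do not intersect inside $X$. Nestedness of the limit should be read off from the characterization of Definition \ref{nested} via the sets $X_\le(y,k_\pm(y))$: one passes to the limit in the inclusions $X^N_{\le}(y_i,k^N(y_i)) \subset X^N_{<}(y_j, k^N(y_j))$. Concretely, the discrete sub-level sets $X^N_\le(y_i, k^N(y_i)) = \{x : b(x,z_{i+1}) - b(x,z_i) \le k^N(y_i)\}$ are, after dividing the defining inequality by the arclength spacing $L/N$, of the form $\{x : \text{(difference quotient of } y\mapsto b(x,(y,F(y)))) \le (\text{rescaled }k^N)\}$, which converge (Hausdorff, using the uniform bounds and $Df \in L^\infty$ to control the normalizing constants $k^N$, whose defining relation $\mu(X^N_\le) = \nu_N((-\infty,y_i])$ passes to the limit because $\mu$ does not charge the level sets) to the continuum sub-level sets $X_\le(y, k_\pm(y))$ appearing in Definition \ref{nested}; the strict inclusion in the limit is preserved because Theorem \ref{thm: no intersection} gives non-intersection of the limiting indifference curves, which is exactly the geometric content of nestedness (see \cite{ChiapporiMcCannPass17}). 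Hence $u$ is nested.

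I expect the main obstacle to be the last step: ensuring the strict inclusion survives in the limit (inclusions of closed sets only pass to the limit weakly, so one must use the quantitative non-intersection coming from \ref{condition2}--\ref{condition4}, not merely the qualitative statement), and making the convergence $k^N(y_i) \to k(y)$ of the normalizing constants rigorous when $y$ ranges over a continuum — this requires uniform (in $N$ and $y$) continuity of the maps $k \mapsto \mu(X^N_\le(y_i,k))$, which is where the density lower bound $f \ge \alpha$ and the non-degeneracy $D_x b(x,z_{i+1}) \ne D_x b(x,z_i)$ (uniform over the curve) are used. An alternative that sidesteps some of this is to prove nestedness of $u$ directly from the non-intersection of its own indifference curves — but establishing that non-intersection for the continuum solution still seems to route through the discrete approximation, so the limiting argument appears unavoidable.
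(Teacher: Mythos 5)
Your overall route is the same as the paper's: approximate by the semi-discrete problems, extract a uniformly convergent subsequence of solutions $u_N$ whose limit $u$ solves the continuum problem (this is Lemma \ref{conv}, which the paper proves via stability of optimal transport and an approximation of the continuum optimizer by discrete competitors, much as you propose), and then transfer discrete nestedness of the approximants to nestedness of the limit (Lemma \ref{noatoms} and Proposition \ref{con.nest}).

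The gap is in the last step, which you correctly flag as the main obstacle but do not resolve, and the mechanism you suggest is not the one that works. You propose to preserve the strict inclusions $X^N_{\le}\subset X^N_{<}$ in the limit by invoking a ``quantitative non-intersection coming from \ref{condition2}--\ref{condition4}''; no such quantitative estimate is derived anywhere, and Theorem \ref{thm: no intersection} concerns the indifference curves of a fixed discrete solution, not the limiting level sets $X_=(z,k_\pm(z))$. The paper's device is qualitative and simpler: in this two-dimensional, bilinear setting the sets $X^N_=(z_i,k^N(z_i))$ are straight lines, so the intersection point $x_N$ of two of them solves an explicit $2\times 2$ linear system whose data (endpoints $d_N,e_N$ on $\partial X$ and normalized direction vectors) converge; discrete nestedness puts $x_N$ in the closed set $\mathbb{R}^2\setminus X$, hence the limiting intersection point is also outside $X$. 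This requires first identifying the limits of the discrete sub-level sets with the continuum ones, i.e.\ proving $\mu(D)=\nu([0,\underline z])$ for the limiting half-plane $D$, which is a separate argument using $f\ge\alpha$. Second, your sketch ignores atoms of the limiting measure $\nu$: Definition \ref{nested} involves $k_+$ and $k_-$, which differ precisely at atoms, and Lemma \ref{noatoms} only covers non-atoms. Most of the work in Proposition \ref{con.nest} consists of approximating an arbitrary $\overline z$ by nearby non-atoms $\overline z_\delta$, showing $k(\overline z_{\delta})\to k_-(\overline z)$, and then deducing the correct one of the two possible inclusions from a mass comparison. Without these two ingredients the passage from discrete to continuum nestedness does not close.
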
  
We note a consequence on the regularity of the consumers' utility function $u.$
\begin{corollary}\label{c1}
    Under the assumptions in Theorem \ref{cont.nestedness}, there exists a continuously differentiable solution $u\in C^1(X)$ of the monopolist's problem with data $(\mu, Y,c)$.
\end{corollary}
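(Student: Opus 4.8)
The plan is to deduce $C^1$ regularity of the utility function $u$ from the nestedness established in Theorem \ref{cont.nestedness}, using the simple characterization of the optimal map that nestedness provides (the continuous analogue recalled just before Definition \ref{nested}). First I would fix a nested solution $u$ of the monopolist's problem with data $(\mu, Y, c)$, whose existence is guaranteed by Theorem \ref{cont.nestedness}, and recall that $u \in \mathcal{U}$ means $u(x) = \max_{z \in Y}\, x \cdot z - v(z)$ for some pricing function $v$, so that $u$ is automatically convex, hence locally Lipschitz and differentiable $\mu$-a.e., with $Du(x) = y^*(x) \in Y$ wherever the gradient exists. Since a convex function is $C^1$ on an open set if and only if it is differentiable everywhere there, the whole task reduces to showing that the subdifferential $\partial u(x)$ is a singleton at \emph{every} $x \in X$, not merely almost every one.

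The key step is to rule out \emph{kinks}, i.e. points $x$ at which $\partial u(x)$ contains more than one element. Because $Y$ parametrizes a curve $z = (y, F(y))$ with $F$ strictly convex, the extreme points of $\partial u(x)$ all lie on $Y$; if $\partial u(x)$ were not a singleton, it would be the convex hull of at least two distinct points $z_- = (y_-, F(y_-))$ and $z_+ = (y_+, F(y_+))$ of $Y$ with $y_- < y_+$, and then $x$ would be an indifference point between $y_-$ and $y_+$ and, by convexity of $F$, also between all intermediate products $y \in (y_-, y_+)$. In the semi-discrete picture this is exactly an intersection of indifference curves; passing to the continuum via the approximating sequence $Y_N$ (as in the proof of Theorem \ref{cont.nestedness}) and invoking Theorem \ref{thm: no intersection} / Corollary \ref{do not intersect}, such a configuration is excluded on $Y_N$ for every large $N$, and the nestedness of the limiting problem then forbids it in the continuum as well: a nested problem, by the characterization after Definition \ref{nested}, assigns each $x$ to exactly one $y$, so the sub-level set structure $\{X_\leq(y, k_+(y))\}$ is totally ordered and no single $x$ can lie on two distinct level sets $X_=(y, k(y))$, $X_=(y', k(y'))$ with $\nu((y,y'))>0$. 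Hence $y^*$ is single-valued everywhere, $\partial u(x) = \{(y^*(x), F(y^*(x)))\}$ for all $x$, and $u \in C^1(X)$.

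The main obstacle I anticipate is the careful passage to the limit: Theorem \ref{thm: no intersection} is stated for the \emph{discrete} problems with data $(\mu, Y_N, c)$, so one must show that a non-differentiability of the continuous solution $u$ would survive (or be detectable) along the approximating sequence — concretely, that if $x_0$ were a kink of $u$ with $\partial u(x_0)$ spanning products in $[y_-, y_+]$, then for all large $N$ the discrete solutions $u_N$ would exhibit two indifference curves meeting near $x_0$, contradicting Theorem \ref{thm: no intersection}. This requires the uniform estimates and the convergence $u_N \to u$ already assembled for Theorem \ref{cont.nestedness} (uniform Lipschitz bounds giving local uniform convergence, and stability of the induced transport plans), together with the observation that an interior kink of a convex limit forces, for large $N$, at least three products to be ``almost optimal'' near $x_0$ for $u_N$ — precisely the failure of discrete nestedness. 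Once this stability statement is in hand, the rest is the routine convex-analysis fact quoted above, and the corollary follows.
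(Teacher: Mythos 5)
Your overall strategy (reduce $C^1$ regularity of the convex function $u$ to single-valuedness of $\partial u(x)$ at \emph{every} $x$, and rule out kinks by the no-crossing structure of indifference curves) is reasonable, but it diverges from the paper's proof, which is a two-line citation: the solution is nested and, by Proposition \ref{con.nest}, the induced measure $\nu=Du_{\#}\mu$ has \emph{connected support}, so Theorem 4 of \cite{ChiapporiMcCannPass17} applies directly and gives that the optimal map agrees $\mu$-a.e.\ with a continuous function, whence $u\in C^1(X)$. The connectedness of $\operatorname{supp}\nu$ is not decorative: it is exactly the hypothesis that closes the hole in your argument, and as written your proof does not cover it.

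Concretely, your exclusion of kinks only treats pairs $y_-<y_+$ with $\nu((y_-,y_+))>0$ --- you yourself write that nestedness forbids $x$ from lying on two level sets $X_=(y,k(y))$, $X_=(y',k(y'))$ ``with $\nu((y,y'))>0$'', and then conclude ``hence $y^*$ is single-valued everywhere.'' That inference fails precisely when $\nu((y_-,y_+))=0$, i.e.\ when the support of $\nu$ has a gap: then the definition of nestedness imposes no inclusion between $X_\le(y_-,k_+(y_-))$ and $X_<(y_+,k_-(y_+))$, and a whole segment of consumers can be indifferent between the two non-adjacent goods $z_-$ and $z_+$, producing a genuine kink of $u$ (this is the standard ``gap'' scenario; it is why the paper proves connectedness of $\operatorname{supp}\nu$ in Proposition \ref{con.nest} before invoking the regularity theorem of \cite{ChiapporiMcCannPass17}). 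To repair your argument you must either quote that part of Proposition \ref{con.nest} or rule out gaps directly (e.g.\ by the profitability argument of Lemma \ref{m}/Theorem \ref{between} passed to the limit). Two smaller issues: the claim that indifference between $z_-$ and $z_+$ forces, ``by convexity of $F$,'' indifference to all intermediate products is false (intermediate products may simply be overpriced; indeed in the gap scenario they are not chosen by anyone), though it is not load-bearing; and the passage to the limit from Theorem \ref{thm: no intersection} for $(\mu,Y_N,c)$ to the continuum, which you flag as the main obstacle, is left as a sketch --- in the paper this is exactly the content of Lemma \ref{noatoms} and Proposition \ref{con.nest}, so you should lean on those rather than re-deriving the stability statement.
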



When $\mu$ is uniform, from \eqref{eqn: explicit t values} and \eqref{eqn: explicit t values 2}, we can deduce the solution of the continuous problem which is the limit of the solutions $u_N$ as $N\to\infty.$ For $y\in Y$, there exists a sequence $(y_{i_N})$ converging to $y$ such that $y_{i_N}\in Y_N$ for all $N.$ Then, the sequence $(t_{i_N}^N)$ defined in \eqref{eqn: explicit t values} and \eqref{eqn: explicit t values 2} converges to $t_y$ so that 
\[t_y=\frac{1}{2}-\frac{3}{4}F'(y)+\frac{1}{2}(c_{x_1}(y,F(y))+F'(y)c_{x_2}(y,F(y)))\]
if $t_y+F'(y)<1,$ otherwise
\[t_y=\frac{1}{3}-\frac{2}{3}(F'(y)-(c_{x_1}(y,F(y))+F'(y)c_{x_2}(y,F(y)))).\]
Hence, the optimal map for the continuous problem matches all $x\in L(y):=\{x\in X:\, x_2=\frac{1}{F'(y)}(x_1-t_y)+1\}$ to the point $y\in Y.$

\section{Conclusion}
This paper introduces  analogues of the nestedness criterion introduced in \cite{ChiapporiMcCannPass17} which apply to semi-discrete and continuous monopolist's problems.  It then provides general conditions under which solutions to multi-to one-dimensional screening problems satisfy nestedness, for both continuous and discrete sets of products.  This leads to a relatively simple general characterization of solutions, from which many examples can be solved explicitly, while others can be  solved numerically in a very efficient way.  A uniqueness result is also established (in Appendix~\ref{uniqueness}).  While nestedness of solutions is proven under various simplifying assumptions, including linearity in types of preference functions, and a two-dimensional type space, we believe that similar results are likely to hold in other situations as well. This is a natural direction for future work.

\appendix
\section{Connection to optimal transport}\label{connection to ot}

In this part, we present the optimal transport problem and its connection with the monopolist's problem. Let $\mu$ and $\nu$ be probability measures on bounded domains $X \subset \mathbb{R}^m$,  $Y\subset \mathbb{R}^n$  respectively, and let $b\in C(X\times Y)$ be the surplus function. Then the Monge-Kantorovich optimal transport problem is 
to find a measure $\gamma\in \Gamma(\mu,\nu)$ maximizing
\begin{equation}\label{eqn: OT primal}\tag{KP}KP:\,=\,\max_{\gamma\in\Gamma(\mu,\nu)}\int_{X\times Y} b(x,y)d\gamma(x,y),
\end{equation}
where $\Gamma(\mu,\nu)$ is the set of probability measures on $X\times Y$ with $\mu$ as the first marginal and $\nu$ as the second marginal, i.e 
\[\int_{A\times Y}d\gamma(x,y)=\mu(A) \text{ and }\int_{X\times B}d\gamma(x,y)=\nu(B)\]
for all measurable sets $A\subseteq X$ and  $B\subseteq Y.$ 

When an optimizer $\gamma$ vanishes outside $Graph(T)$, where $T\,:\,X\,\to Y,$ we call $T$ an \emph{optimal map}. In this case, $T$ satisfies
\[\nu(B)=T_{\#}\mu(B)=\mu[T^{-1}(B)]\] for all measurable sets $B\subseteq Y$  and we say $\nu$ is the push-forward of $\mu$ through $T.$

A powerful tool for understanding the Kantorovich problem is the dual linear program
\begin{equation}\label{eqn: OT dual}\tag{DP}
KP^*\,:=\, \inf_{(u,v)\in \mathcal{V}}\int_X u(x)d\mu(x)+\int_Y v(y)d\nu(y),
\end{equation}
where $\mathcal{V}$ is the set of payoff functions $(u,v)\in L^1(\mu)\times L^1(\nu)$ satisfying the inequality
\[u(x)+v(y)-b(x,y)\ge0\] on $X\times Y.$ 

It is well known that   $K\,P=K\,P^*$, solutions to both problems exist, and the optimal plan $\gamma$ in \eqref{eqn: OT primal} vanishes outside the zero set of the function $u+v-b$ where $(u,v)$ solve \eqref{eqn: OT dual} \cite{Santambrogio}. It is also known that the optimizers $(u,v)$ are $b$-convex conjugates, 
meaning that \[u(x)=\max_{y\in Y}b(x,y)-v(y)\text{ and }
v(y)=\max_{x\in X}b(x,y)-u(x).\]

In what follows, assume that $Y$ is one-dimensional $(n=1)$. We assume that the mixed second order derivative $D_x\Big(\frac{\partial b}{\partial y}(x,y)\Big)\neq 0$ for all $(x,y)\in X\times Y$ which implies by the Implicit Function Theorem that $\Big[\frac{\partial b}{\partial y}(\cdot,y)\Big]^{-1}(k)$ is of dimension $m-1$ for each constant $k \in \frac{\partial b}{\partial y}(X,y).$ To define the notion of nestedness introduced in \cite{ChiapporiMcCannPass17}, we start by  defining the following levels:

Assuming $\mu(A)>0$ for all nonempty open sets $A\subseteq X$ and that $\mu$ does not charge any $X_=(y,k)$ (that is, $\mu(X_=(y,k))=0$ for all $y \in Y$ and $k \in \mathbb{R}$), we define $k_+(y)$ and $k_-(y)$ such that 
\[\mu(X_{\le}(y,k_+(y)))=\nu((-\infty,y]) \text{ and } \mu(X_{<}(y,k_-(y)))=\nu((-\infty,y)).\]




\begin{definition}\label{nested}
    We say the optimal transport problem $(\mu,\nu,b)$ is nested if
    \[\text{for all } y_0,y_1 \text{ such that } y_0<y_1, \nu((y_0,y_1))>0\implies X_{\le}(y_0,\,k_+(y_0))\subset X_{<}(y_1,\,k_-(y_1)).\] 
\end{definition}\footnote{ The definition here actually differs slightly from the one in \cite{ChiapporiMcCannPass17}; in \cite{ChiapporiMcCannPass17}, it was assumed that the target measure $\nu$ is non-atomic.  We do not wish to make that assumption here, as we will connect the problem to the monopolist's problem in Proposition \ref{OT to PA} below; in that correspondence, the target measure $\nu$ corresponds to the distribution of goods produced by the monopolist.  This is endogenous, and may well contain atoms.}
In much of what follows, we will specialize to the case $b(x,y)=x\cdot z(y)$ where $z(y)$ parametrizes a one-dimensional curve; in this case, we will sometimes suppress $b$ and simply write that the OT problem $(\mu,\nu)$ is nested.   For general $b$, Chiappori-McCann-Pass prove that if the problem $(\mu,\nu,b)$ is nested, then the optimal map admits the following simple characterization: every $x \in X_=(y,k)$ for exactly one $y$ and some $k \in [k_-(y),k_+(y)]$, and the optimizer maps $x$ to this $y$ \cite{ChiapporiMcCannPass17}. Note that whenever $y$ is not an atom, $\nu(\{y\})=0$, we have that $k_-(y)=k_+(y)$, and for such $y$ we will sometimes denote this common value simply as $k(y)$.

Below, we establish an analogous result when the target measure $\nu$ is discrete.
\subsection{Semi-discrete optimal transport}
Consider the semi-discrete optimal transport problem where $\mu$ is a probability measure on $X\subset \mathbb{R}^m$ such that $\mu(A)>0$ for all nonempty open sets $A\subseteq X$, and $\nu=\sum_{i=0}^N \nu_i \delta_{y_i}$ is a probability measure on a finite $Y=Y_N=\{y_0,y_1,...,y_N\}$. In what follows we assume that $b$ satisfies $D_xb(x,y_i) -D_xb(x,y_{i-1}) \neq 0$ for all $x\in X$ and $0<i\le N.$ Using the Implicit Function Theorem on the equation $b(x,y_i)-b(x,y_{i-1})=k,$ we get that the preimage $[b(\cdot,y_i)-b(\cdot,y_{i-1})]^{-1}(k)$ is of dimension $m-1$ for each constant $k \in [b(\cdot,y_i)-b(\cdot,y_{i-1})](X).$   In this setting, the optimal plan $\gamma$ between $\mu$ and $\nu$ induces subregions $X_i$ such that all $x \in X_i$ are mapped to $y_i$. These regions can be described in terms of a potential function $v : Y_N \to \mathbb{R}$ that solves the dual problem \eqref{eqn: OT dual}. More precisely, for each $i$ we define
\[
X_i = \left\{ x \in X \,:\, b(x, y_i) - v(y_i) > b(x, y_j) - v(y_j) \quad \text{for all } j \neq i \right\}.
\]
Letting $u : X \to \mathbb{R}$ be the associated dual function defined by
\[
u(x) = \max_{0\le j\le N}  b(x, y_j) - v(y_j) ,
\]
we see that for all $x \in X_i$, the maximum is achieved uniquely at index $i$, so that $u(x) = b(x, y_i) - v(y_i)$. Hence, the boundary $\overline{X_i}\cap \overline{X_j} = \{x: u(x)=b(x,y_i)-v(y_i)=b(x,y_j)-v(y_j)\}$ between $X_i$ and $X_j$ is the set of \emph{indifference points}; each such agent has their utility maximized by both $y_j$ and $y_i$. When $m=2$, we will sometimes refer to $\overline{X_i}\cap \overline{X_j}$ as an \emph{indifference curve}.   We define two regions $X_i$ and $X_j$ to be \emph{adjacent} if their indifference set $\overline{X_i} \cap \overline{X_j}$ has positive $(m-1)$-dimensional Hausdorff measure.
  
When $Y$ is discrete, we define nestedness of the semi-discrete optimal transport as follows:
\begin{definition}
    We say the optimal transport problem $(\mu,\nu,b)$ is discretely nested if 
    \[ \nu(\{y_{k}:\, i< k\le j\})>0\,\implies\,X_{\le}^N(y_{i},k^N(y_{i}))\subset X_{<}^N(y_{j},k^N(y_{j})),\] 
     for all $i<j$  where $k^N(y_{r})$  satisfies $\mu(X_{\le}^N(y_{r},k^N(y_{r})))=\nu(\{y_{p}:\, 0\le p\le r \}).$
\end{definition}

\begin{remark}
    If $\mu(X^N_{=}(y_i, k))=0$ for all $y_i$ and $k$, then by the continuity of $k\mapsto h(y_i,k):=\mu(X_{\le}^N(y_i,k))-\nu(\{y_{p}:\, 0\le p\le i \})$ and as $k\mapsto h(y_i,k)$ goes monotonically from $-\nu(\{y_{p}:\, 0\le p\le i \}) \leq 0$ to $1-\nu(\{y_{p}:\, 0\le p\le i \})\geq 0,$ by the Intermediate Value Theorem, there exists $k^N(y_i)$ such that $h(y_i,k^N(y_i))=0.$ Since $\mu$ assigns positive measure to every nonempty open subset of $X$, we get the uniqueness of $k^N(y_i).$ 

\end{remark}

The following result provides a characterization of the solution of the discretely nested optimal transport problems.
\begin{theorem}\label{nestchar}
Assume that the optimal transport problem $(\mu, \nu, b)$ is discretely nested and $\mu$ does not charge any $X^N_{=}(y_i, k)$. Then, setting $X_0=X^N_{<}(y_0, k^N(y_0)),$ $X_N=X\setminus X^N_{\le}(y_{N-1}, k^N(y_{N-1})),$ and $X_i = X^N_{<}(y_i, k^N(y_i)) \setminus X^N_{\leq}(y_{i-1}, k^N(y_{i-1}))$ for all $0<i<N$, the potentials $(u,v)$ defined as $u(x) = b(x, y_i) - v(y_i)$ for all $x \in X_i$,  such that 
\[
v(y_i) = \sum_{k=0}^{i-1}(b(a_k, y_{k+1}) - b(a_k, y_k))
\]
with $v(y_0) = 0$, solve the dual problem \eqref{eqn: OT dual} for any $a_k \in X^N_{=}(y_k, k^N(y_k))$. Furthermore, the mapping $T$ sending all $x \in X_i$ to $y_i$ for each $i=0,1,...,N$ is an optimal map.
\end{theorem}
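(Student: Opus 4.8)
The plan is to verify directly that the proposed pair $(u,v)$ is dual-feasible and that the complementary slackness condition holds with respect to the map $T$, which by the standard OT duality recalled above is enough to conclude optimality of both $(u,v)$ and $T$. The first thing to establish is that the definition of $v(y_i)$ is \emph{consistent}, i.e.\ that the formula $u(x)=b(x,y_i)-v(y_i)$ on $X_i$ glues correctly across the boundaries $\overline{X_i}\cap\overline{X_{i+1}}$, and that the value of $v(y_i)$ does not depend on the choice of $a_k\in X^N_=(y_k,k^N(y_k))$. For the latter, note that on $X^N_=(y_k,k^N(y_k))$ the quantity $b(x,y_{k+1})-b(x,y_k)$ is constant equal to $k^N(y_k)$ by definition of the level set, so $b(a_k,y_{k+1})-b(a_k,y_k)=k^N(y_k)$ regardless of which $a_k$ is chosen; hence $v(y_i)=\sum_{k=0}^{i-1}k^N(y_k)$ is well-defined. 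Consistency of the gluing is then the statement that on the interface between $X_i$ and $X_{i+1}$ (which, by discrete nestedness, is precisely $X^N_=(y_i,k^N(y_i))$ up to a $\mu$-null set) we have $b(x,y_i)-v(y_i)=b(x,y_{i+1})-v(y_{i+1})$, i.e.\ $b(x,y_{i+1})-b(x,y_i)=v(y_{i+1})-v(y_i)=k^N(y_i)$, which is exactly the defining equation of that level set.

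Next I would check that the sets $X_0,\dots,X_N$ really do partition $X$ (up to $\mu$-null sets) and that $T_\#\mu=\nu$. This is where discrete nestedness does the real work: the hypothesis gives the nesting chain $X^N_\le(y_0,k^N(y_0))\subset X^N_\le(y_1,k^N(y_1))\subset\cdots\subset X^N_\le(y_{N-1},k^N(y_{N-1}))$ (whenever the intermediate $\nu$-mass is positive; the degenerate cases where some $\nu_i=0$ can be absorbed by relabeling as the paper notes for the one-dimensional case), so the $X_i$ as defined are the successive ``annuli'' of this increasing family together with the innermost piece $X_0$ and the complement $X_N$, and they are disjoint and cover $X$. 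Since $\mu$ does not charge any $X^N_=(y_i,k)$, the calibration equations $\mu(X^N_\le(y_r,k^N(y_r)))=\nu(\{y_p:0\le p\le r\})$ telescope to give $\mu(X_i)=\nu_i$ for every $i$, hence $T_\#\mu=\nu$.

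Then comes dual feasibility: I must show $u(x)+v(y_j)-b(x,y_j)\ge 0$ for \emph{all} $x\in X$ and all $j$, i.e.\ that for $x\in X_i$ one has $b(x,y_i)-v(y_i)\ge b(x,y_j)-v(y_j)$ for every $j$. Writing this as $\sum$ of the incremental inequalities, it suffices to show that for $x\in X_i$ and $j>i$ one has $b(x,y_{j})-b(x,y_{j-1})\le k^N(y_{j-1})$, and for $j<i$ one has $b(x,y_{j+1})-b(x,y_{j})\ge k^N(y_{j})$; summing these telescoping inequalities over the relevant range yields $b(x,y_i)-v(y_i)\ge b(x,y_j)-v(y_j)$. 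But for $x\in X_i$ with $i\le j-1$ we have $x\in X^N_\le(y_i,k^N(y_i))\subset X^N_\le(y_{j-1},k^N(y_{j-1}))$ by the nesting chain, which says exactly $b(x,y_{j})-b(x,y_{j-1})\le k^N(y_{j-1})$; and for $x\in X_i$ with $j\le i-1$ we have $x\notin X^N_\le(y_{i-1},k^N(y_{i-1}))\supseteq$ (nothing directly), so instead use $x\notin X^N_\le(y_j,k^N(y_j))$, which follows because $x\in X_i$ means $x\notin X^N_\le(y_{i-1},k^N(y_{i-1}))$ and by the chain $X^N_\le(y_j,k^N(y_j))\subset X^N_\le(y_{i-1},k^N(y_{i-1}))$ for $j\le i-1$; thus $b(x,y_{j+1})-b(x,y_j)>k^N(y_j)$, even better than needed. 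This handles all $j\ne i$. Finally, complementary slackness: the plan $\gamma=(\mathrm{id}\times T)_\#\mu$ is concentrated on $\bigcup_i X_i\times\{y_i\}$, and on each such piece $u(x)+v(y_i)-b(x,y_i)=0$ by construction, so $\gamma$ is concentrated on the zero set of $u\oplus v-b$; since $\gamma\in\Gamma(\mu,\nu)$ and $(u,v)\in\mathcal V$, the usual argument ($\int b\,d\gamma=\int u\,d\mu+\int v\,d\nu\ge KP\ge KP^*\ge\int u\,d\mu+\int v\,d\nu$) forces equality throughout, so $(u,v)$ solves \eqref{eqn: OT dual} and $T$ is an optimal map.

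The main obstacle I anticipate is bookkeeping around the degenerate indices where $\nu_i=0$ or where a level set carries no $\mu$-mass: in those cases the corresponding $X_i$ is $\mu$-null and the definition of $k^N(y_i)$ or the strictness in the nesting inclusion may fail, so one has to argue (as the excerpt does for $d=1$) that such goods $y_i$ are never chosen and can be harmlessly dropped, and verify that the telescoping identities and inequalities above survive this reduction. Beyond that, the only subtlety is making sure the inequalities are applied in the correct direction on either side of $X_i$ (the $j>i$ versus $j<i$ split above), which is mechanical once the nesting chain is in hand.
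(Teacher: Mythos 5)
Your proposal is correct and follows essentially the same route as the paper: well-definedness of $v$ via constancy of $b(\cdot,y_{k+1})-b(\cdot,y_k)$ on the level sets, dual feasibility via the telescoping incremental inequalities supplied by the nesting chain (the paper phrases this as the step-by-step comparison $u_i \geq u_{i-1} > u_{i-2} > \dots$, which is the same argument), and then Kantorovich duality to conclude. Your additional verification of $T_\#\mu=\nu$ and complementary slackness makes explicit what the paper dismisses as "clear from the construction," and your flagged concern about degenerate indices with $\nu_i=0$ is a legitimate bookkeeping point that the paper's own proof also glosses over.
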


\begin{corollary}\label{do not intersect}
    If $(\mu,\nu,b)$ is discretely nested, then no indifference curves of the solution intersect in $X.$
\end{corollary}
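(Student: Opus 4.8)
The plan is to deduce Corollary~\ref{do not intersect} directly from the explicit description of the solution furnished by Theorem~\ref{nestchar}. Recall that under discrete nestedness the optimal partition is $X_i = X^N_{<}(y_i, k^N(y_i)) \setminus X^N_{\le}(y_{i-1}, k^N(y_{i-1}))$, so the only indifference sets with positive $(m-1)$-dimensional Hausdorff measure are those between \emph{consecutive} regions, $\overline{X_{i-1}} \cap \overline{X_i}$, which is contained in the level set $X^N_{=}(y_{i-1}, k^N(y_{i-1}))$. (Between non-consecutive regions $X_i, X_j$ with $j > i+1$, discrete nestedness gives $X^N_{\le}(y_i, k^N(y_i)) \subset X^N_{<}(y_j, k^N(y_j))$, which forces $\overline{X_i}$ and $\overline{X_j}$ to be separated by the intervening regions, so their intersection — if nonempty at all — is lower-dimensional and is not an indifference curve in the sense defined in Section~3.) So it suffices to show that two consecutive indifference sets $\Sigma_i := X^N_{=}(y_{i-1}, k^N(y_{i-1}))$ and $\Sigma_{i+1} := X^N_{=}(y_i, k^N(y_i))$ do not meet inside $X$.

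First I would suppose, for contradiction, that there is a point $x^* \in \Sigma_i \cap \Sigma_{i+1} \cap X$. By the characterization of $(u,v)$ in Theorem~\ref{nestchar}, at $x^*$ we would have $u(x^*) = b(x^*, y_{i-1}) - v(y_{i-1}) = b(x^*, y_i) - v(y_i) = b(x^*, y_{i+1}) - v(y_{i+1})$; that is, $x^*$ is simultaneously indifferent among the three distinct goods $y_{i-1}, y_i, y_{i+1}$. Equivalently, $b(x^*, y_i) - b(x^*, y_{i-1}) = k^N(y_{i-1})$ and $b(x^*, y_{i+1}) - b(x^*, y_i) = k^N(y_i)$. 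I would then argue this is incompatible with discrete nestedness applied to the pair $i-1 < i$ (using $\nu(\{y_i\}) > 0$, which holds since $y_i$ carries mass $\nu_i > 0$ in the support): nestedness requires $X^N_{\le}(y_{i-1}, k^N(y_{i-1})) \subset X^N_{<}(y_i, k^N(y_i))$, i.e. $\{x : b(x,y_i) - b(x,y_{i-1}) \le k^N(y_{i-1})\} \subset \{x : b(x, y_{i+1}) - b(x,y_i) < k^N(y_i)\}$. But our point $x^*$ lies in the left-hand set (with equality) while $b(x^*, y_{i+1}) - b(x^*, y_i) = k^N(y_i)$, so $x^* \notin X^N_{<}(y_i, k^N(y_i))$ — it lies on the boundary level set, not strictly inside. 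This is the desired contradiction, provided the strict inclusion in the definition of discrete nestedness is genuinely strict on the relevant set, which it is because $x^* \in X$ and the sets in question are taken relative to $X$.

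The step I expect to require the most care is handling the degenerate possibility that $y_i$ does \emph{not} lie in $\operatorname{supp}\nu$, i.e. $\nu_i = 0$, or more generally that one of the intervening regions is empty so that $X_i$ and $X_{i+1}$ are in fact "adjacent" despite the index gap — but this is exactly the caveat already built into the definitions (such $y_k$ "can be neglected"), so after relabeling to the goods actually carrying mass, the consecutive-region argument above applies verbatim. A secondary point to nail down is the claim that an indifference \emph{curve} must have positive $(m-1)$-dimensional Hausdorff measure (this is the paper's definition of adjacency/indifference curve), so that a mere lower-dimensional incidental intersection of $\overline{X_i}$ and $\overline{X_j}$ for $|i-j|>1$ does not count; invoking the Implicit Function Theorem hypothesis $D_x b(x,y_i) - D_x b(x, y_{i-1}) \ne 0$, each $\Sigma_i$ is an $(m-1)$-dimensional manifold, and two transverse such manifolds meet in dimension at most $m-2$, so the intersection cannot itself be an indifference curve. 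Assembling these observations gives the corollary.
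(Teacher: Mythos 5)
Your proposal is correct and follows essentially the same route as the paper: identify the indifference curves with the level sets $X^N_=(y_i,k^N(y_i))$ via Theorem~\ref{nestchar}, then observe that discrete nestedness places $X^N_=(y_i,k^N(y_i))\subseteq X^N_\le(y_i,k^N(y_i))\subset X^N_<(y_j,k^N(y_j))$, which is disjoint from $X^N_=(y_j,k^N(y_j))$. The paper applies this inclusion uniformly to all pairs $i<j$ rather than splitting into consecutive and non-consecutive cases, so your extra discussion of adjacency and Hausdorff dimension, while harmless, is not needed.
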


\textbf{Connection between OT and the monopolist's problem:}
Returning to the monopolist's problem, given a pricing function  $v$ and corresponding competitor $u \in \mathcal{U}$ in  \eqref{PA}, define $\nu =y^*_\#\mu$, representing the distribution of products sold.  Then it is well known that $y^*$ is an optimal map for the optimal transport problem \eqref{eqn: OT primal} with surplus $b$ and marginals $\mu$ and $\nu$,  while $u$ and $v$ solve its dual \eqref{eqn: OT dual}~\cite{FigalliKimMcCann11}.
Thus, any feasible competitor in, and, in particular, any solution to, the monopolist's problem induces a solution to an optimal transport problem.

\begin{prop}\label{OT to PA discrete}
     Let $u\in\mathcal{U}$ be a solution of the monopolist's problem \eqref{PA} with data $(\mu,Y_N,c)$. If the optimal transport problem $(\mu,y^*_\#\mu,b)$ is discretely nested and $y^*_\#\mu(\{y_i\})>0$ for all $\underline{M}\le i\le \overline{M},$ and \(y^*_\#\mu(\{y_i\}) = 0\) otherwise, for some $0\le \underline{M}\le \overline{M}\le N$, then $u$ is a discretely nested solution of \eqref{PA}.
\end{prop}
\begin{proof}
    Let $(u,v)$ be the solution of the dual problem \eqref{eqn: OT dual} of $(\mu,y^*_\#\mu,b).$  By Theorem \ref{nestchar}, we conclude that $k^N(y_i)=v(y_{i+1})-v(y_i)$ for all $0\le i<N.$ When  $\underline{M}\le i<\overline{M},$ we get $\nu(\{y_{k}:\, i< k\le j\})>0$ for all $j>i,$ and as $(\mu,y^*_\#\mu,b)$ is discretely nested, we get $X_{\le}^N(y_{i},v(y_{i+1})-v(y_i))\subset X_{<}^N(y_{j},v(y_{j+1})-v(y_j)).$ When $i\ge \overline{M},$ we get $X=X_{\le}^N(y_{i},v(y_{i+1})-v(y_i))= X_{<}^N(y_{j},v(y_{j+1})-v(y_j)).$ Similarly, we get $\emptyset=X_{\le}^N(y_{i},v(y_{i+1})-v(y_i))= X_{<}^N(y_{j},v(y_{j+1})-v(y_j))$ whenever $i<j<\underline{M}.$ Thus, $u$ is discretely nested solution of \eqref{PA}.
\end{proof}

\begin{prop}\label{OT to PA}
    Let $u\in\mathcal{U}$ be a solution of the monopolist's problem \eqref{PA} with data $(\mu,Y,c)$. If the optimal transport problem $(\mu,y^*_\#\mu,b)$ is nested and the support of $y^*_\#\mu$ is connected, then $u$ is a nested solution of \eqref{PA}.
\end{prop}

\begin{proof}

Assume that $(\mu,y^*_\#\mu,b)$ is nested where $y^*$ is defined as above.  Note that $(u,v)$  is the solution of the dual problem \eqref{eqn: OT dual} of $(\mu,y^*_\#\mu,b)$  where $v(y)=\max_{x\in X}\{b(x,y)-u(x)\}.$    Following \cite{ChiapporiMcCannPass17}, we introduce the \emph{$b$-subdifferential} of $v$ at $y$, defined by 
\(
\partial_b v(y) := \Bigl\{x \in X : \, b(x,y)-v(y) \;\ge\; b(x,y')-v(y') \quad \text{for all } y'\in Y \Bigr\}.
\)  
It follows that the matching $y^*$ which sends
$x \in X_\le(y,k_+(y)) \setminus X_<(y,k_-(y)) \;=\; \partial_b v(y)$
to $y$ is the Monge solution of $(\mu,y^*_\#\mu,b)$. From the first  order optimality condition of optimal transport, we get that $\frac{\partial b}{\partial y}(x,y^*(x))\in \partial v(y^*(x))$ where $\partial v(y)$ is the subdifferential of $v$ at $y.$ Let $k\in[k_-(y),k_+(y)],$ then there exists $x_k\in  X_\le(y,k_+(y))\setminus X_<(y,k_-(y))$ such that $\frac{\partial b}{\partial y}(x_k,y)=k.$ Thus, $[k_-(y),k_+(y)] \subset \partial v(y)$ for all $y\in Y.$ 

Since 
\[
\partial v(y)
= \operatorname{con}\left\{
\frac{\partial b}{\partial y}(x,y) :\, x \in \partial_b v(y)
\right\}
\]
(see Theorem~10.31 of~\cite{RockafellarWets1998}), 
where $\operatorname{con}(\cdot)$ denotes the convex hull, 
it follows that
\[
\partial v(y)
= \bigl[
\min_{x \in \partial_b v(y)} \tfrac{\partial b}{\partial y}(x,y),\,
\max_{x \in \partial_b v(y)} \tfrac{\partial b}{\partial y}(x,y)
\bigr].
\]
we deduce that $[k_-(y),k_+(y)] = \partial v(y)=[v_-'(y),v'_+(y)].$ 

As the support of \(y^*_\#\mu\) is connected, we have 
\(y^*_\#\mu\) supported on \([\underline{s}, \overline{s}]\).  
Whenever \(y \le \underline{s}\), we obtain 
\(X_{<}(y, k_-(y)) = X_{\le}(y', k_+(y')) = \emptyset\) 
for all \(y' < y\) by the definition of \(k_{\pm}\).  
Similarly, we get \(X_{<}(y, k_-(y)) = X_{\le}(y', k_+(y')) = X\) 
whenever \(\overline{s} \le y' < y\).  
Now, when \(\underline{s} \le y' < \overline{s}\), we have \(\nu((y',y)) > 0\) for all $y'<y$, 
which implies \(X_{\le}(y', v'_+(y')) \subset X_{<}(y, v'_-(y))\), 
and therefore \(u\) is nested.  

\end{proof}

\section{Proofs}
Proof of Theorem \ref{nestchar}:
\begin{proof}
It is clear from the construction that the mapping $T$, which maps each $X_i$ to $y_i$, pushes $\mu$ forward to $\nu$. The other conclusions will follow from Kantorovich duality if we can show $u(x) + v(y) \geq b(x, y)$ for all $x \in X, y \in Y_N$, with equality when $T(x) = y$.

Set $v(y_0):=v_0 = 0$ and $v(y_i):=v_i = \sum_{k=0}^{i-1}(b(a_k, y_{k+1}) - b(a_k, y_k))$ for $i \geq 1$; note that this is well-defined since $x \mapsto b(x, y_{k+1}) - b(x, y_k)$ is constant along $X^N_{=}(y_k, k^N(y_k))$. We need to show that
\[
u_i(x) := b(x, y_i) - v_i \geq b(x, y_j) - v_j := u_j(x)
\]
for all $j$ when $x \in X_i$. Note that for these $v_i$, we have
\[
b(x, y_i) - v_i = b(x, y_{i+1}) - v_{i+1}
\]
along $X^N_{=}(y_i, k^N(y_i))$, and therefore $u_i = b(x, y_i) - v_i > b(x, y_{i-1}) - v_{i-1} = u_{i-1}$ throughout $X_i \subseteq X^N_{\leq}(y_i, k^N(y_i)) \setminus X^N_{\leq}(y_{i-1}, k^N(y_{i-1}))$. Now, the discrete nestedness condition also implies that $X^N_{\leq}(y_{i-1}, k^N(y_{i-1})) \subset X^N_{\leq}(y_i, k^N(y_i))$ and $X_i \subset X \setminus X^N_{\leq}(y_{i-1}, k^N(y_{i-1}))$, where $u_{i-1} > u_{i-2}$. Hence, in $X_i$ we have
\[
u_i > u_{i-2}.
\]
Continuing in this way, we can show that throughout $X_i$,
\[
u_i \geq u_{i-1} > u_{i-2} > \dots > u_j
\]
for all $j < i$. A similar argument shows $u_i \geq u_j$ for $j > i$, completing the proof.
\end{proof}
We next prove Corollary \ref{do not intersect}:
\begin{proof}
    From Theorem \ref{nestchar}, we know that the indifference curves of the solutions are the level sets $X_=(y_i,k^N(y_i)).$ For all $i<j,$ we have $X_=(y_i,k^N(y_i))\subseteq X_\le(y_i,k^N(y_i))\subset X_<(y_j,k^N(y_j))=X_\le(y_j,k^N(y_j))\setminus X_=(y_j,k^N(y_j)),$ which implies $X_=(y_i,k^N(y_i))\cap X_=(y_j,k^N(y_j))=\emptyset$ completing the proof.
\end{proof}
We next prove the assertions in Example \ref{ex1}:

\begin{proof}
    It is easy to check that $c$ and $F$ satisfy conditions \ref{condition2} and \ref{condition3}. We will prove condition \ref{condition4} is satisfied. Let $F(y)=Ay^2,$ then 
    \[\frac{1}{2}\frac{F(y_{i+1})-F(y_i)}{y_{i+1}-y_i}\Big(2+\frac{\frac{F(y_{i+1})-F(y_i)}{y_{i+1}-y_i}}{\frac{F(y_{i})-F(y_{i-1})}{y_{i}-y_{i-1}}}\Big)=A\frac{y_{i+1}+y_i}{2}\Big(2+\frac{y_{i+1}+y_i}{y_{i}+y_{i-1}}\Big).\] It is sufficient to prove that $A\frac{y_{i+1}+y_i}{2}\Big(2+\frac{y_{i+1}+y_i}{y_{i}+y_{i-1}}\Big)<1.$ We have 
    \[\begin{array}{ll}
    A\frac{y_{i+1}+y_i}{2}\Big(2+\frac{y_{i+1}+y_i}{y_{i}+y_{i-1}}\Big)&\le\frac{y_{i+1}+y_i}{6}\Big(2+\frac{y_{i+1}+y_i}{y_{i}+y_{i-1}}\Big)\vspace{3pt}\\
    &\le\frac{2i+1}{6N}\Big(3+\frac{y_{i+1}-y_{i-1}}{y_{i}+y_{i-1}}\Big)\vspace{3pt}\\
    &\le\frac{2i+1}{6N}\Big(3+\frac{2\cos(\theta_{i-1})}{(2i-1)\cos(\theta_{i-1})}\Big),
    
    \end{array}\]
    where $\theta_r$ is the angle between the vector $(y_{r+1}-y_r,F(y_{r+1})-F(y_r))=(\frac{\cos(\theta_r)}{N},\frac{\sin(\theta_r)}{N})$ and the $x_1-$axis. Note that the last inequality comes from the fact that $F$ is convex and then $y_{i+1}-y_{i-1}=\frac{1}{N}(\cos(\theta_{i-1})+\cos(\theta_i))\le\frac{2\cos(\theta_{i-1})}{N}.$ Also, we have $(2i-1)\cos(\theta_{i-1})\le y_i+y_{i-1},$ as $\cos(\theta_r)$ is decreasing in $r$ from the convexity of $F.$ Hence,
    
    \[\begin{array}{ll}
    A\frac{y_{i+1}+y_i}{2}\Big(2+\frac{y_{i+1}+y_i}{y_{i}+y_{i-1}}\Big)&\le\frac{2i+1}{6N}\Big(3+\frac{2}{2i-1}\Big)=\frac{1}{6N}\Big(3(2i+1)+\frac{4i+2}{2i-1}\Big).
    
    \end{array}\]
    When $i=0,1$ the claim is satisfied.  Since $g(s)=3(2s+1)+\frac{4s+2}{2s-1}$ is increasing for all $s\ge2,$ we get $g(i)\le g(N-1),$ and then
    \[\begin{array}{ll}
    A\frac{y_{i+1}+y_i}{2}\Big(2+\frac{y_{i+1}+y_i}{y_{i}+y_{i-1}}\Big)&\le\frac{1}{6N}\Big(3(2(N-1)+1)+\frac{4(N-1)+2}{2(N-1)-1}\Big)=\frac{12N^2-18N-(2N-7)}{12N^2-18N}<1
    
    \end{array}\]
    when $N>3$ which completes the proof.
\end{proof}
Next we prove Lemma \ref{partition}:
\begin{proof}
    Assume there exists $k$ such that $\overline{X_{i_k}}\cap (([0,1]\times\{0\}) \cup(\{1\}\times[0,1]))=\emptyset.$ Since $z_j-z_i$ is in the direction of a line with slope $\overline{s},$ where  $0<\overline{s}<F'(\Tilde{y}),$   the boundary of $X_{i_k}$ in the interior of $X$ consists of segments with slope $-\frac{1}{s}< 0$ for some $0<s<F'(\Tilde{y}).$ Hence, the region $\overline{X_{i_k}}$ contains a point $e=(e_1,e_2)$ with $e_2=\min\{x_2:\, (x_1,x_2)\in \overline{X_{i_k}},\,  0<x_1<1 \}$ which is contained in $\overline{X}.$ This implies that $e$ is the intersection  of two boundary segments of $X_{i_k}$ that lie on  the graphs of $L_1(x_2)=e_1-s_1(x_2-e_2)$ and $L_2(x_2)=e_1-s_2(x_2-e_2)$ such that  $-\frac{1}{s_1}>-\frac{1}{s_2}$ and  $L_1(x_2)<L_2(x_2)$ when $x_2>e_2.$  
   Then, $s_1>s_2$  and since $F$ is an increasing convex function, we deduce that $s_1$ corresponds to the slope of $z_i-z_j$ and  $s_2$ to the slope of $z_i-z_p$ where $j>p,$ which means that part of the graph of $L_1$ is the indifference segment between $X_i$ and $X_j$ and part of the graph of $L_2$ is the indifference segment between $X_i$ and $X_p.$ However, for small enough $\varepsilon>0$ the line $x_2=e_2+\varepsilon$ intersects the two segments and since $u$ is convex, $u(x_1,e_2+\varepsilon)$  is convex in $x_1$. Then,  $\frac{\partial}{\partial x_1}(u(x_1,e_2+\varepsilon))$  is increasing. But, since $L_1(x_2)<L_2(x_2),$ $\frac{\partial}{\partial x_1}(u(x_1,e_2+\varepsilon))$ starts from $y_j$ and decreases to $y_p$ between $L_1(e_2+\varepsilon)-\delta$ and $L_2(e_2+\varepsilon)+\delta$ for small enough $\delta>0,$  which is a contradiction.

For the second part, we extend $u$ by continuity to $\overline{X}.$ Since $u(x_1,0)$ and $u(1,x_2)$ are convex functions, $\frac{\partial u}{\partial x_1}(x_1,0)$ is increasing when $0\le x_1\le1$ from $y_{i_0}$ to $y_{i_{k0}}$ and $\frac{\partial u}{\partial x_2}(1,x_2)$ is increasing from $F(y_{i_{k0}})$ to  $F(y_{i_p})$ for some $z_{i_{k_0}},z_{i_{p}}\in (z_{i_{k}}).$ Hence, $\overline{X}_{i_{k+1}}\cap\overline{X}_{i_k}\cap \Big(([0,1]\times\{0\}) \cup(\{1\}\times[0,1])\Big)\neq \emptyset,$ using the first part. 
\end{proof}
Theorem \ref{between} says that, roughly stated, if, given some pricing plan,  two goods are purchased and one between them is not, the plan cannot be optimal.  The next three lemmas establish this fact in different cases, depending on where the indifference line between consumers choosing these goods intersects the boundary.
\begin{lem}\label{between1}
   Let $ u \in \mathcal{U} $, and assume that $ c $ and $ F $ satisfy condition \ref{condition2}. Suppose there exist indices $ k < j - 1 $ such that $ \mu(X_k) > 0 $ and $ \mu(X_j) > 0 $, while $ \mu(X_i) = 0 $ for all $ k < i < j $. Additionally, assume that $ X_k $ and $ X_j $ are adjacent, and the set of indifference points between $ X_k $ and $ X_j $ intersects the segment $ (0,1) \times \{0\} $. Under these conditions, $ u $ cannot be a solution to the monopolist's problem.

\end{lem}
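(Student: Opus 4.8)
The plan is to argue by contradiction: assuming $u$ is optimal, I will construct an explicit perturbation of the pricing plan that strictly increases the monopolist's profit. The setup is that goods $z_k$ and $z_j$ are both purchased, no good strictly between them is purchased, the regions $X_k$ and $X_j$ are adjacent, and their common indifference segment meets the bottom edge $(0,1)\times\{0\}$. The key geometric fact I would exploit is that along this indifference segment the linear function $x \mapsto x\cdot(z_j - z_k) - (v_j - v_k)$ vanishes, and since $z_j - z_k$ points in a direction of positive slope, the segment has negative slope, so it separates $X_k$ (below-left, where $z_k$ is preferred) from $X_j$ (above-right). The idea is to insert the intermediate good $z_i$ for some $k < i < j$ by \emph{lowering its price} $v_i$ from whatever value currently makes it non-purchased down to a value at which a thin sliver of consumers near the $X_k$--$X_j$ indifference segment switches to buying $z_i$.

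The main computation I would carry out is a first-variation estimate. Let $\epsilon > 0$ be the amount by which we drop $v_i$ below the "critical" level (the largest $v_i$ for which $X_i = \emptyset$); the newly-created region $X_i^\epsilon$ is a thin strip of width proportional to $\epsilon$ along the old indifference line, with $\mu(X_i^\epsilon) \sim C\epsilon$ for a constant $C > 0$ depending on the density lower bound $\alpha$ and the slopes involved. Consumers in $X_i^\epsilon$ formerly bought either $z_k$ or $z_j$; the change in profit has three contributions: (i) the revenue from selling $z_i$ at its new price minus its cost, on $X_i^\epsilon$; (ii) the lost revenue from no longer selling $z_k$ (resp. $z_j$) on the part of $X_i^\epsilon$ carved out of $X_k$ (resp. $X_j$), net of the recovered cost $c(z_k)$ (resp. $c(z_j)$). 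Writing everything out, the profit change is, to leading order in $\epsilon$, a sum of two boundary integrals over the pieces of the old indifference segment, with integrand controlled by the quantity $\frac{c(z_j) - c(z_i)}{F(y_j) - F(y_i)} - \frac{c(z_i) - c(z_k)}{F(y_i) - F(y_k)}$ — exactly the expression appearing in hypothesis \ref{condition2}. Since by convexity of $c$ relative to $F$ (which is what \ref{condition2} asserts, for the triple $z_k < z_i < z_j$) this quantity is positive, and since the fact that the indifference segment meets $(0,1)\times\{0\}$ and has negative slope guarantees the strip $X_i^\epsilon$ genuinely has positive measure on \emph{both} sides, the leading-order profit change is strictly positive for small $\epsilon$. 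This contradicts optimality of $u$.

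The step I expect to be the main obstacle is making the geometry of the strip $X_i^\epsilon$ precise and showing its measure is genuinely of order $\epsilon$ (not $o(\epsilon)$) with the correct sign of the profit increment — this is where the hypothesis that the $X_k$--$X_j$ indifference segment hits the bottom edge is essential, since it ensures the two new indifference lines (between $X_i$ and $X_k$, and between $X_i$ and $X_j$) emanate from a common point on $\partial X$ and open up into $X$, so that $X_i^\epsilon$ is a nondegenerate triangle-like region rather than pinching off to measure zero. One must also check that no \emph{other} consumers' choices change (i.e. the perturbation is genuinely local), which follows because lowering a single price $v_i$ can only enlarge $X_i$ at the expense of its geometric neighbors, and here the relevant neighbors are exactly $X_k$ and $X_j$. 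The remaining cases — where the indifference segment meets the right edge $\{1\}\times[0,1]$, or meets the boundary elsewhere, or where $X_k$ and $X_j$ are not adjacent — are precisely what the subsequent lemmas handle, so here I only need the bottom-edge, adjacent case.
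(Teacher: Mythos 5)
Your overall strategy is the same as the paper's: lower the price of the skipped good $z_i$ so that a small region of consumers near the point where the $X_k$--$X_j$ indifference segment meets the bottom edge switches to $z_i$, then show the profit strictly increases using \ref{condition2}. However, there is a genuine gap at the decisive step, and a geometric error feeding into it. First, the new region is not a ``thin strip along the old indifference line'' of measure $\sim C\epsilon$: the two new indifference lines (between $z_i$ and $z_k$, and between $z_i$ and $z_j$) meet at the single triple-indifference point $a$ on the old segment, so the new region is a wedge with apex $a$ opening toward the bottom edge, a triangle of measure $\sim C a_2^2$ (where $a_2$ is the height of the apex, comparable to your $\epsilon$). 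Both sub-pieces $X_k\cap X_i^a$ and $X_j\cap X_i^a$ scale like $a_2^2$.

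Second, and more importantly, you assert that ``writing everything out'' the leading-order profit change is controlled by the quantity in \ref{condition2}, but this is exactly the step that requires proof. The profit change is
\[
\big[c(z_j)-c(z_i)-a\cdot(z_j-z_i)\big]\mu(X_j\cap X_i^a)+\big[a\cdot(z_i-z_k)-(c(z_i)-c(z_k))\big]\mu(X_k\cap X_i^a),
\]
and the revenue terms $a\cdot(z_j-z_i)$, $a\cdot(z_i-z_k)$ are $O(1)$ with $a_1$ bounded away from $0$, so they multiply measures of the \emph{same} order $a_2^2$ as the cost terms; nothing about them is automatically small. The paper's proof hinges on an exact algebraic identity: the widths of the two sub-triangles at each height are in the ratio $(r_j-r_i)/(r_i-r_k)$ satisfying $(y_i-y_k)-(y_j-y_i)\frac{r_j-r_i}{r_i-r_k}=0$, so the revenue contributions cancel to order $a_2^2$, leaving a remainder of order $a_2^3$ controlled by $\|f_{x_1}\|_\infty$ (this is why the density is assumed to have bounded gradient). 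Only after this cancellation does the cost term, whose $a_2^2$ coefficient is shown positive by a further manipulation using \ref{condition2} together with the convexity of $F$, dominate. Without exhibiting this cancellation your leading-order sign is undetermined, so the argument as sketched does not close.
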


\begin{proof}

          Suppose that $k<i<j$ and $\mu(X_i)=0.$  We will show that lowering the price of good $y_i$ leads to increased profits.  

          Lowering the price of $z_i$ by $\delta=a\cdot(z_j-z_i)-(v(z_j)-v(z_i))$ results in a new region of customers  $X_i^a$, with positive mass, choosing $z_i$ under the lowered price, where  $a\cdot z_j-v(z_j)=a\cdot z_i-v(z_i)+\delta=a\cdot z_k-v(z_k)$.  That is, $a=(a_1,\,a_2)\in X$ is the point on the original indifference segment between $X_k$ and $X_j$ which is also the intersection of the indifference segment between $X_k^a$ and $X_i^a$ and the indifference segment between $X_i^a$ and $X_j^a$, where $X_p^a$ is the set of customers choosing $z_p$ after lowering the price of $z_i$ (note that by Lemma \ref{partition},  $X_i^a$  is adjacent to $X_k^a$ and $X_j^a$ ). Let $u_a$ be the new payoff function.

          Since $u_a=u$ everywhere except on $ X_i^a,$ and since  $u_a(x)=(x-a)\cdot z_i+u(a)$ on $X_i^a=(X_k\cap X_i^a)\cup(X_j\cap X_i^a),$ and $u(x)=(x-a)\cdot z_k+u(a)$ on $ X_k\cap X_i^a$ and $u(x)=(x-a)\cdot z_j+u(a)$ on $ X_j\cap X_i^a,$ we can  evaluate the difference in profit in terms of $a$ for small enough $a_2$ as follows
        \[\begin{array}{ll}
            
       \mathcal{P}(u_a)-\mathcal{P}(u)&=
       \int_{X_i^a}(x\cdot(Du_a-Du)-(u_a-u)-(c(Du_a)-c(Du)))f(x)\,dx\vspace{3pt}\\
       &= -(a\cdot(z_j-z_i)-(c(z_j)-c(z_i))\mu( X_j\cap X_i^a)\vspace{3pt}\\
       &\quad +(a\cdot(z_i-z_k)-(c(z_i)-c(z_k))\mu( X_k\cap X_i^a)\vspace{3pt}\\
&= (-a_2(F(y_j)-F(y_i))-a_1(y_j-y_i)+(c(z_j)-c(z_i))\mu( X_j\cap X_i^a)\vspace{3pt}\\
 & \quad +(a_2(F(y_i)-F(y_k))+a_1(y_i-y_k)
      -(c(z_i)-c(z_k))\mu( X_k\cap X_i^a).
        \end{array}\]
     
      In the above expression,  the term $-a_2((F(y_j)-F(y_i))\mu( X_j\cap X_i^a)-(F(y_i)-F(y_k))\mu( X_k\cap X_i^a))$ is of higher order in $a_2$ than the other terms, so for small enough $a_2,$ to study the sign of the difference, it is sufficient to study the sign of
      \begin{equation}\label{c eq}\begin{array}{ll}
      &a_1(-(y_j-y_i)\mu( X_j\cap X_i^a)+(y_i-y_k)\mu( X_k\cap X_i^a))\\
      &+(c(z_j)-c(z_i))\mu( X_j\cap X_i^a)-(c(z_i)-c(z_k))\mu( X_k\cap X_i^a).\end{array}\end{equation}

We have \[\begin{array}{ll}
I_1&=-(y_j-y_i)\mu( X_j\cap X_i^a)+(y_i-y_k)\mu( X_k\cap X_i^a)\vspace{10pt}\\
&=\int_0^{a_2}\Big(\int_{r_i}^{r_j}-(y_j-y_i)f(x)\,dx_1+\int_{r_k}^{r_i}(y_i-y_k)f(x)\,dx_1\Big)\,dx_2,
\end{array}\]
 where $r_i=\frac{F(y_j)-F(y_k)}{y_j-y_k}(a_2-x_2)+a_1,$ $r_j=\frac{F(y_j)-F(y_i)}{y_j-y_i}(a_2-x_2)+a_1,$ and $r_k=\frac{F(y_i)-F(y_k)}{y_i-y_k}(a_2-x_2)+a_1.$     

      Changing the variable in the first integral we get 

      \[I_1=\int_0^{a_2}\Big(\int_{r_k}^{r_i}-(y_j-y_i)\frac{r_j-r_i}{r_i-r_k}f(\beta(x_1),x_2)+(y_i-y_k)f(x_1,x_2)\,dx_1\Big)\,dx_2,\]
where $\beta(x_1)=\frac{r_j-r_i}{r_i-r_k}(x_1-r_k)+r_i.$ 
This integral is equal to

 \[\begin{array}{ll}
 I_1&=\int_0^{a_2}\Big(\int_{r_k}^{r_i}-(y_j-y_i)\frac{r_j-r_i}{r_i-r_k}(f(\beta(x_1),x_2)-f(x_1,x_2))\vspace{10pt}\\
 &\quad +\Big((y_i-y_k)-(y_j-y_i)\frac{r_j-r_i}{r_i-r_k}\Big)f(x_1,x_2)\,dx_1\Big)\,dx_2.
 \end{array}\]

Now by a straightforward calculation, we get 
\[(y_i-y_k)-(y_j-y_i)\frac{r_j-r_i}{r_i-r_k}=\frac{(r_i-r_k)(y_i-y_k)-(y_j-y_i)(r_j-r_i)}{r_i-r_k}=0.\]

Then, 
\[\begin{array}{ll}

I_1 &=(y_i-y_k)\int_0^{a_2}\int_{r_k}^{r_i}\Big(-f(\beta(x_1),x_2)+f(x_1,x_2)\,dx_1\Big)\,dx_2\vspace{3pt}\\
&\ge -(y_i-y_k)\int_0^{a_2}\int_{r_k}^{r_i}\|f_{x_1}\|_\infty(\beta(x_1)-x_1)\,dx_1\,dx_2>K_1a_2^3

\end{array}\]
as $\beta(x_1)-x_1$ is linear in $a_2$ and we are integrating over a triangle with an area that is quadratic in $a_2.$

Now, for the second part of (\ref{c eq}), after changing the variable in the first term we get
\[\begin{array}{ll}
I_2&=(c(z_j)-c(z_i))\mu( X_j\cap X_i^a)-(c(z_i)-c(z_k))\mu( X_k\cap X_i^a)\vspace{3pt}\\

&=\int_0^{a_2}\Big(\int_{r_k}^{r_i}(c(z_j)-c(z_i))\frac{r_j-r_i}{r_i-r_k}(f(\beta(x_1),x_2)-f(x_1,x_2))+((c(z_j)-c(z_i))\frac{r_j-r_i}{r_i-r_k}\vspace{3pt}\\
&\quad -(c(z_i)-c(z_k))) f(x_1,x_2)\,dx_1\Big)\,dx_2,\vspace{3pt}\\
&\geq K_2a_2^3+\int_0^{a_2}\frac{\alpha}{r_i-r_k}((c(z_j)-c(z_i))(r_j-r_i)-(c(z_i)-c(z_k))(r_i-r_k))\int_{r_k}^{r_i}\,dx_1dx_2\vspace{3pt}\\
&=K_2a_2^3+K_3a_2^2,
\end{array}\] 
where the term inside the integral in the last line,
\[\begin{array}{ll}
&\frac{(c(z_j)-c(z_i))(r_j-r_i)-(c(z_i)-c(z_k))(r_i-r_k)}{r_i-r_k}\vspace{3pt}\\
&=\frac{(a_2-x_2)\Big((c(z_j)-c(z_i))\Big(\frac{F(y_j)-F(y_i)}{y_j-y_i}-\frac{F(y_j)-F(y_k)}{y_j-y_k}\Big)-(c(z_i)-c(z_k))\Big(\frac{F(y_j)-F(y_k)}{y_j-y_k}-\frac{F(y_i)-F(y_k)}{y_i-y_k}\Big)\Big)}{(a_2-x_2)\Big(\frac{F(y_j)-F(y_k)}{y_j-y_k}-\frac{F(y_i)-F(y_k)}{y_i-y_k}\Big)}.\\
&=\frac{\Big((c(z_j)-c(z_i))\Big(\frac{F(y_j)-F(y_i)}{y_j-y_i}-\frac{F(y_j)-F(y_k)}{y_j-y_k}\Big)-(c(z_i)-c(z_k))\Big(\frac{F(y_j)-F(y_k)}{y_j-y_k}-\frac{F(y_i)-F(y_k)}{y_i-y_k}\Big)\Big)}{\Big(\frac{F(y_j)-F(y_k)}{y_j-y_k}-\frac{F(y_i)-F(y_k)}{y_i-y_k}\Big)},
\end{array}\]
is constant and  
\[\begin{array}{ll}
K_3=&\hspace{-0.35cm}\frac{\alpha}{2}\Big((c(z_j)-c(z_i))\Big(\frac{F(y_j)-F(y_i)}{y_j-y_i}-\frac{F(y_j)-F(y_k)}{y_j-y_k}\Big)\vspace{3pt}\\
&-(c(z_i)-c(z_k))\Big(\frac{F(y_j)-F(y_k)}{y_j-y_k}-\frac{F(y_i)-F(y_k)}{y_i-y_k}\Big)\Big).
\end{array}\] 
  We claim that $K_3$ is positive. We have 
\begin{equation}\label{H1}\hspace{-6pt}\begin{array}{ll}
&\hspace{-10pt}(c(z_j)-c(z_i))\Big(\frac{F(y_j)-F(y_i)}{y_j-y_i}-\frac{F(y_j)-F(y_k)}{y_j-y_k}\Big)-(c(z_i)-c(z_k))\Big(\frac{F(y_j)-F(y_k)}{y_j-y_k}-\frac{F(y_i)-F(y_k)}{y_i-y_k}\Big)\vspace{3pt}\\
=&\hspace{-8pt}(c(z_j)-c(z_i))\frac{(F(y_j)-F(y_i))(y_j-y_k)-(F(y_j)-F(y_i)+F(y_i)-F(y_k))(y_j-y_i)}{(y_j-y_i)(y_j-y_k)}\vspace{3pt}\\
&\hspace{-8pt}-(c(z_i)-c(z_k))\frac{(F(y_j)-F(y_i)+F(y_i)-F(y_k))(y_i-y_k)-(F(y_i)-F(y_k))(y_j-y_k)}{(y_i-y_k)(y_j-y_k)}\vspace{3pt}\\
=&\hspace{-8pt}\frac{(F(y_j)-F(y_i))(y_i-y_k)-(F(y_i)-F(y_k))(y_j-y_i)}{(y_j-y_k)}\Big(\frac{c(z_j)-c(z_i)}{y_j-y_i}-\frac{c(z_i)-c(z_k)}{y_i-y_k}\Big)>0,
\end{array}
\end{equation}
 since $\frac{F(y_j)-F(y_i)}{y_j-y_i}>\frac{F(y_i)-F(y_k)}{y_i-y_k}$ and by \ref{condition2} we get
$0<\frac{c(z_j)-c(z_i)}{F(y_j)-F(y_i)}-\frac{c(z_i)-c(z_k)}{F(y_i)-F(y_k)}
=\frac{c(z_j)-c(z_i)}{F'(\overline{y}_{ji})(y_j-y_i)}-\frac{c(z_i)-c(z_k)}{F'(\overline{y}_{ik})(y_i-y_k)}<\frac{1}{F'(\overline{y}_{ik})}\big(\frac{c(z_j)-c(z_i)}{y_j-y_i}-\frac{c(z_i)-c(z_k)}{y_i-y_k}\big)$
for some $y_j\ge\overline{y}_{ji}\ge y_i$ and $y_i\ge \overline{y}_{ik}\ge y_k,$ which proves our claim.

Hence, $\mathcal{P}(u_a)-\mathcal{P}(u)
\ge o(a_2^4)+K_1a_2^3+K_2a_2^3+K_3a_2^2,$ and from the order of the terms we conclude that expression \eqref{c eq} has the same sign as $K_3>0,$ for sufficiently small $a_2,$ which completes the proof.
\end{proof}

\begin{lem}\label{between2}
     Let $ u \in \mathcal{U} $, and assume that $ c $ and $ F $ satisfy condition \ref{condition2}. Suppose there exist indices $ k < j - 1 $ such that $ \mu(X_k) > 0 $ and $ \mu(X_j) > 0 $, while $ \mu(X_i) = 0 $ for all $ k < i < j $. Additionally, assume that $ X_k $ and $ X_j $ are adjacent, and the set of indifference points between $ X_k $ and $ X_j $ intersects the segment $\{1\}\times(0,1).$ Then, $u$ is not a solution of the monopolist's problem. 
\end{lem}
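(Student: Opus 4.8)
\textbf{Proof proposal for Lemma \ref{between2}.}

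The plan is to mirror the perturbation strategy of Lemma \ref{between1}, again lowering the price of an unsold good $z_i$ with $k<i<j$ and $\mu(X_i)=0$, but adapted to the geometry near the right-hand edge $\{1\}\times(0,1)$ rather than the bottom edge. As before, let $a=(a_1,a_2)\in X$ be the point on the original indifference segment between $X_k$ and $X_j$ where this segment meets $\partial X$ — now with $a_1=1$ — and lower the price of $z_i$ by $\delta=a\cdot(z_j-z_i)-(v(z_j)-v(z_i))$ so that a new region $X_i^a$ of positive mass, adjacent to both $X_k^a$ and $X_j^a$ by Lemma \ref{partition}, appears. The new payoff function $u_a$ agrees with $u$ outside $X_i^a$, equals $(x-a)\cdot z_i+u(a)$ on $X_i^a$, and the profit difference is again
\[
\mathcal{P}(u_a)-\mathcal{P}(u)= -(a\cdot(z_j-z_i)-(c(z_j)-c(z_i)))\mu(X_j\cap X_i^a)+(a\cdot(z_i-z_k)-(c(z_i)-c(z_k)))\mu(X_k\cap X_i^a).
\]

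The key difference is the parametrization of the small region $X_i^a$. Since the relevant boundary edge is now vertical, I would slice $X_i^a$ by horizontal lines is less natural; instead I would parametrize by the distance $1-x_1=:s$ from the right edge, so $X_i^a$ is (to leading order) a triangle of area quadratic in $\varepsilon$, where $\varepsilon$ bounds how far into $X$ the perturbed region reaches. The three relevant indifference lines through $a$ — the $X_k/X_j$ line with slope determined by $z_j-z_k$, the $X_k^a/X_i^a$ line with slope from $z_i-z_k$, and the $X_i^a/X_j^a$ line with slope from $z_j-z_i$ — all pass through $a=(1,a_2)$, and I would write $X_k\cap X_i^a$ and $X_j\cap X_i^a$ as integrals over these triangular slivers, exactly as in Lemma \ref{between1} but with the roles of the two coordinates interchanged. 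Performing the analogous change of variables $\beta$ that maps one sliver onto the other, the $f$-independent part of $I_1$ again vanishes identically by the same linear-algebra identity (the slope of the middle line is the appropriate convex combination of the outer two), leaving a term controlled by $\|f_{x_2}\|_\infty$ of one order higher in $\varepsilon$; the cost term $I_2$ again produces a leading term proportional to the quantity in \eqref{H1}, which is positive by \ref{condition2}. Hence the lowest-order term in $\mathcal{P}(u_a)-\mathcal{P}(u)$ is a positive constant times $\varepsilon^2$, contradicting optimality.

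I expect the main obstacle to be purely bookkeeping: keeping the orientation and the sign conventions straight when the perturbation region hugs the vertical edge instead of the horizontal one, and verifying that the "higher-order in the small parameter" claims still hold — in particular that the term which was of higher order in $a_2$ in Lemma \ref{between1} (the one with coefficient $F(y_j)-F(y_i)$ etc.) is still subleading here, which should follow because along each indifference line the coordinate transverse to the edge is linear in the coordinate along it. One should also confirm that the new region $X_i^a$ genuinely lies inside $X=(0,1)^2$ for small $\varepsilon$, i.e. that $a_2\in(0,1)$ strictly, which is where the hypothesis that the indifference points meet $\{1\}\times(0,1)$ (the open segment) is used; the degenerate corner cases $a_2\in\{0,1\}$ are handled by Lemma \ref{between1} or its mirror. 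Finally, I would note that only \ref{condition2} is needed here (as in Lemma \ref{between1}), since the sign of \eqref{H1} is all that drives the argument; conditions \ref{condition3}--\ref{condition4} enter only in the no-intersection result, Theorem \ref{thm: no intersection}.
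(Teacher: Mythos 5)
Your overall strategy (lower the price of the skipped good $z_i$ at a point $a$ on the $X_k$/$X_j$ indifference segment and let $a$ slide toward the right edge) is exactly the paper's, but the leading-order bookkeeping contains a genuine gap. When you slice the sliver vertically (parametrizing by $x_1\in[a_1,1]$), the cancellation identity changes sides: what vanishes identically is the $f$-constant part of the \emph{$F$-difference} term $-a_2(F(y_j)-F(y_i))\mu(X_j\cap X_i^a)+a_2(F(y_i)-F(y_k))\mu(X_k\cap X_i^a)$ (one checks $(F(y_j)-F(y_i))(r_j-r_i)=(F(y_i)-F(y_k))(r_i-r_k)$ for the vertical-slicing $r$'s), \emph{not} the $y$-difference term $-a_1(y_j-y_i)\mu(X_j\cap X_i^a)+a_1(y_i-y_k)\mu(X_k\cap X_i^a)$ that played the role of $I_1$ in Lemma \ref{between1}. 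The latter is now a genuine $O((1-a_1)^2)$ contribution with coefficient $a_1\to1$, so it can be neither discarded as higher order nor killed by cancellation; your proposal leaves it unaccounted for. Note also that the $F$-difference term is subleading only because of this cancellation --- its coefficient $a_2$ is $O(1)$ here, unlike in Lemma \ref{between1} where the analogous coefficient was the small parameter --- so your stated reason for its subleading-ness is not the right one.

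The paper handles the non-cancelling term by writing $a_1=1-(1-a_1)$: the $(1-a_1)$ piece is $O((1-a_1)^3)$, and the remaining ``$1$'' piece is absorbed into the cost terms, so the leading-order coefficient becomes $[(c(z_j)-c(z_i))-(y_j-y_i)](r_j-r_i)-[(c(z_i)-c(z_k))-(y_i-y_k)](r_i-r_k)$, as in \eqref{c eq2}. Its positivity requires \emph{two} facts: an \eqref{H1}-type inequality from \ref{condition2} for the cost part, and a separate estimate $(y_i-y_k)(r_i-r_k)-(y_j-y_i)(r_j-r_i)>0$, which the paper derives from the convexity of $F$. Your proposal invokes only the first. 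The conclusion is salvageable (both contributions turn out to be positive), but as written the argument misidentifies which term cancels and omits the second positivity estimate, so the decisive sign claim is not justified. (A minor point: $a$ should be an interior point of the indifference segment with $a_1\to1$, not the boundary point with $a_1=1$, since the small parameter is $1-a_1$.)
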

 \begin{proof}
Suppose that   $k<i<j$ and $\mu(X_i)=0.$ Let $u_a$ be the utility function defined as in Lemma \ref{between1}. We evaluate the difference in profit as follows
\[\hspace{-6pt}\begin{array}{ll} &\mathcal{P}(u_a)-\mathcal{P}(u) \vspace{3pt}\\
      =&\hspace{-6pt} (-a_2(F(y_j)-F(y_i))-a_1(y_j-y_i)+(c(z_j)-c(z_i))\mu( X_j\cap X_i^a)\vspace{3pt}\\
      &\hspace{-6pt} +(a_2(F(y_i)-F(y_k))+a_1(y_i-y_k)
      -(c(z_i)-c(z_k))\mu( X_k\cap X_i^a)\vspace{3pt}\\
  =&\hspace{-6pt}(-a_2(F(y_j)-F(y_i))+(1-a_1)(y_j-y_i)-(y_j-y_i)+(c(z_j)-c(z_i))\mu( X_j\cap X_i^a)\vspace{3pt}\\
      &\hspace{-6pt}+(a_2(F(y_i)-F(y_k))-(1-a_1)(y_i-y_k)+(y_i-y_k)-(c(z_i)-c(z_k))\mu( X_k\cap X_i^a)\vspace{3pt}\\
      =&\hspace{-6pt}(1-a_1)((y_j-y_i)\mu( X_j\cap X_i^a)-(y_i-y_k)\mu( X_k\cap X_i^a))\vspace{3pt}\\
      
      &\hspace{-6pt}+a_2(-(F(y_j)-F(y_i))\mu( X_j\cap X_i^a)+(F(y_i)-F(y_k))\mu( X_k\cap X_i^a))\vspace{3pt}\\
      &\hspace{-6pt}+((c(z_j)-c(z_i))-(y_j-y_i))\mu( X_j\cap X_i^a)-((c(z_i)-c(z_k))-(y_i-y_k))\mu( X_k\cap X_i^a).
      \end{array}\]
 As $a_1\to1,$ $\mathcal{P}(u_a)-\mathcal{P}(u)$ has the same sign as 
\[\hspace{-0.45cm}\begin{array}{ll}
&\quad a_2(-(F(y_j)-F(y_i))\mu( X_j\cap X_i^a)+(F(y_i)-F(y_k))\mu( X_k\cap X_i^a))\vspace{3pt}\\
&\quad +((c(z_j)-c(z_i))-(y_j-y_i))\mu( X_j\cap X_i^a)-((c(z_i)-c(z_k))-(y_i-y_k))\mu( X_k\cap X_i^a)\vspace{3pt}\\
&=a_2I_1+I_2
\end{array}\]
We have
\[\begin{array}{ll}
I_1&=-\int_{a_1}^1\int_{r_i}^{r_j}(F(y_j)-F(y_i))f(x)\,dx_2-\int_{r_k}^{r_i}(F(y_i)-F(y_k))f(x)\,dx_2\,dx_1\vspace{3pt}\\
&=-\int_{a_1}^1\int_{r_k}^{r_i}(F(y_j)-F(y_i))\frac{r_j-r_i}{r_i-r_k}(f(x_1,\beta(x_2))-f(x_1,x_2))\vspace{3pt}\\
&\quad +((F(y_j)-F(y_i))\frac{r_j-r_i}{r_i-r_k}-(F(y_i)-F(y_k))f(x_1,x_2)\,dx_2\,dx_1\vspace{3pt}\\
&\ge-\int_{a_1}^1\int_{r_k}^{r_i}(F(y_j)-F(y_i))\frac{r_j-r_i}{r_i-r_k}\|f_{x_2}\|_\infty(\beta(x_2)-x_2)\,dx_2\,dx_1\vspace{3pt}\\
&=K_1(1-a_1)^3
\end{array}\]
 where  $r_i=-\frac{y_j-y_k}{F(y_j)-F(y_k)}(x_1-a_1)+a_2,$  $r_j=-\frac{y_j-y_i}{F(y_j)-F(y_i)}(x_1-a_1)+a_2,$ and $r_k=-\frac{y_i-y_k}{F(y_i)-F(y_k)}(x_1-a_1)+a_2,$
and we  change the variable in the first integral and we get $\beta(x_2)=\frac{r_j-r_i}{r_i-r_k}(x_2-r_k)+r_i.$ Note that 
\[(F(y_j)-F(y_i))\frac{r_j-r_i}{r_i-r_k}-(F(y_i)-F(y_k))=0.\]
Moving to $I_2,$  we have 
\begin{equation}\label{c eq2}\hspace{-6pt}
\begin{array}{ll}

I_2\hspace{-8pt}&=\int_{a_1}^1\int_{r_i}^{r_j}((c(z_j)-c(z_i))-(y_j-y_i))f(x)\,dx_2\vspace{3pt}\\
&\quad -\int_{r_k}^{r_i}((c(z_i)-c(z_k))-(y_i-y_k))f(x)\,dx_2\,dx_1 \vspace{3pt}\\
&=\int_{a_1}^1\int_{r_k}^{r_i}((c(z_j)-c(z_i))-(y_j-y_i))\frac{r_j-r_i}{r_i-r_k}(f(x_1,\beta(x_2))-f(x_1,x_2))\vspace{3pt}\\
& \quad+((c(z_j)-c(z_i))-(y_j-y_i))\frac{r_j-r_i}{r_i-r_k}-((c(z_i)-c(z_k))-(y_i-y_k)))f(x_1,x_2)\,dx_2\,dx_1\vspace{3pt}\\
&\ge K_2(1-a_1)^3+\alpha((c(z_j)-c(z_i))-(y_j-y_i))\frac{r_j-r_i}{r_i-r_k}\vspace{3pt}\\
&\hspace{-8pt}\quad -((c(z_i)-c(z_k))-(y_i-y_k)))K_3(1-a_1)^2,
\end{array}
\end{equation}
where $K_3$ is a positive number . Hence, it is sufficient to study the sign of  
\[\frac{((c(z_j)-c(z_i))-(y_j-y_i))(r_j-r_i)-((c(z_i)-c(z_k))-(y_i-y_k))(r_i-r_k))}{r_i-r_k}.\]
We have 
\[\begin{array}{ll}
&\frac{(y_i-y_k)(r_i-r_k)-(y_j-y_i)(r_j-r_i)}{x_1-a_1}\vspace{3pt}\\
&=(y_i-y_k)\Big(\frac{y_i-y_k}{F(y_i)-F(y_k)}-\frac{y_j-y_k}{F(y_j)-F(y_k)}\Big)-(y_j-y_i)\Big(\frac{y_j-y_k}{F(y_j)-F(y_k)}-\frac{y_j-y_i}{F(y_j)-F(y_i)}\Big)\vspace{3pt}\\
&= \Big(\frac{(y_i-y_k)(F(y_j)-F(y_i))-(y_j-y_i)(F(y_i)-F(y_k))}{F(y_j)-F(y_k)}\Big)\Big(\frac{y_i-y_k}{F(y_i)-F(y_k)}-\frac{y_j-y_i}{F(y_j)-F(y_i)}\Big)>0.

\end{array}\]

For the second part, similarly to \ref{H1}, we have 
\[\hspace{-6pt}\begin{array}{ll}

&\frac{(c(z_j)-c(z_i))(r_j-r_i)-(c(z_i)-c(z_k))(r_i-r_k)}{x_1-a_1}\vspace{3pt}\\
=&\hspace{-8pt}(c(z_j)-c(z_i))\Big(\frac{y_j-y_k}{F(y_j)-F(y_k)}-\frac{y_j-y_i}{F(y_j)-F(y_i)}\Big)-(c(z_i)-c(z_k))\Big(\frac{y_i-y_k}{F(y_i)-F(y_k)}-\frac{y_j-y_k}{F(y_j)-F(y_k)}\Big)\vspace{3pt}\\
=&\hspace{-8pt}\frac{(F(y_j)-F(y_i))(y_i-y_k)-(F(y_i)-F(y_k))(y_j-y_i)}{F(y_j)-F(y_k)}\Big(\frac{c(z_j)-c(z_i)}{F(y_j)-F(y_i)}-\frac{c(z_i)-c(z_k)}{F(y_i)-F(y_k)}\Big)>0
\end{array}\]
from the condition \ref{condition2} on $F$ and $c.$ Hence, $a_2I_1+I_2>0$ which completes our proof.
\end{proof}

\begin{lem}\label{between3}
     Let $ u \in \mathcal{U} $, and assume that $ c $ and $ F $ satisfy condition \ref{condition2}. Suppose there exist indices $ k < j - 1 $ such that $ \mu(X_k) > 0 $ and $ \mu(X_j) > 0 $, while $ \mu(X_i) = 0 $ for all $ k < i < j $. Additionally, assume that $ X_k $ and $ X_j $ are adjacent, and the set of indifference points between $ X_k $ and $ X_j $  passes through the point $(1,0).$ Then, $u$ is not a solution of the monopolist's problem. 
\end{lem}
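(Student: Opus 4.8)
The plan is to re-run the perturbation from Lemmas~\ref{between1} and \ref{between2}. Assume $k<i<j$ with $\mu(X_i)=0$, and lower the price of $z_i$ by $\delta(a):=a\cdot(z_j-z_i)-(v(z_j)-v(z_i))\ge 0$, where $a=(a_1,a_2)$ runs along the original $X_k$--$X_j$ indifference segment, producing a payoff $u_a$ and a new region $X_i^a$ of positive mass choosing $z_i$. By hypothesis this segment passes through the corner $(1,0)$, so, writing $\epsilon:=1-a_1$ and letting $m_{kj}:=\frac{y_j-y_k}{F(y_j)-F(y_k)}$, one checks the segment is $\{x_2=m_{kj}(1-x_1)\}$ and we may take $a=(1-\epsilon,\,m_{kj}\epsilon)$ with $\epsilon>0$ as small as we like. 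Then $X_i^a$ is a small polygon inside $(1-\epsilon,1)\times(0,m_{kj}\epsilon)$, bounded above by the new $X_i^a$--$X_j^a$ indifference line through $a$ of slope $-m_j$ with $m_j:=\frac{y_j-y_i}{F(y_j)-F(y_i)}$, below by the new $X_i^a$--$X_k^a$ line through $a$ of slope $-m_k$ with $m_k:=\frac{y_i-y_k}{F(y_i)-F(y_k)}$ and by the edge $\{x_2=0\}$, and it is split by the original $X_k$--$X_j$ line into $X_j\cap X_i^a$ (above) and $X_k\cap X_i^a$ (below). Convexity of $F$ gives $m_j<m_{kj}<m_k$, and consequently only $X_k\cap X_i^a$ meets the clip $\{x_2=0\}$.

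Exactly as in Lemmas~\ref{between1}--\ref{between2}, the change in profit is
\[
\mathcal{P}(u_a)-\mathcal{P}(u)=\big[(c(z_j)-c(z_i))-a\cdot(z_j-z_i)\big]\mu(X_j\cap X_i^a)+\big[a\cdot(z_i-z_k)-(c(z_i)-c(z_k))\big]\mu(X_k\cap X_i^a).
\]
In contrast to Lemmas~\ref{between1}--\ref{between2}, both $X_j\cap X_i^a$ and $X_k\cap X_i^a$ here have Lebesgue area of exact order $\epsilon^2$ and the $y$-contributions do not cancel at leading order, so no delicate second-order expansion or change of variables is needed. Since $a\to(1,0)$ and $f$ is Lipschitz with $f\ge\alpha>0$, replacing $a$ by $(1,0)$ in the brackets and $\mu(\cdot)$ by $f(1,0)\,|\cdot|$ (each an error of relative order $\epsilon$) gives
\[
\mathcal{P}(u_a)-\mathcal{P}(u)=f(1,0)\Big[\big((c(z_j)-c(z_i))-(y_j-y_i)\big)\,|X_j\cap X_i^a|-\big((c(z_i)-c(z_k))-(y_i-y_k)\big)\,|X_k\cap X_i^a|\Big]+o(\epsilon^2),
\]
where an elementary computation of the two areas yields $|X_j\cap X_i^a|=\tfrac12(m_{kj}-m_j)\epsilon^2$ and $|X_k\cap X_i^a|=\tfrac{m_{kj}(m_k-m_{kj})}{2m_k}\epsilon^2$ (the latter from integrating the $\{x_2=0\}$-clipped width).

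It remains to show the bracketed quantity is positive, contradicting optimality of $u$. Dividing through by $\tfrac{m_{kj}(m_k-m_{kj})}{2m_k}\epsilon^2>0$ and using that $\tfrac{m_k(m_{kj}-m_j)}{m_{kj}(m_k-m_{kj})}$ simplifies to $R:=\frac{(y_i-y_k)(F(y_j)-F(y_k))}{(F(y_j)-F(y_i))(y_j-y_k)}$, it suffices to check
\[
(c(z_j)-c(z_i))R-(c(z_i)-c(z_k))>(y_j-y_i)R-(y_i-y_k).
\]
The right-hand side equals $(y_i-y_k)\left(\tfrac{(y_j-y_i)(F(y_j)-F(y_k))}{(F(y_j)-F(y_i))(y_j-y_k)}-1\right)\le 0$, since $\tfrac{F(y_j)-F(y_k)}{y_j-y_k}\le\tfrac{F(y_j)-F(y_i)}{y_j-y_i}$ by convexity of $F$. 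For the left-hand side, \ref{condition2} gives $c(z_j)-c(z_i)>\tfrac{F(y_j)-F(y_i)}{F(y_i)-F(y_k)}(c(z_i)-c(z_k))$, so $(c(z_j)-c(z_i))R>\tfrac{(y_i-y_k)(F(y_j)-F(y_k))}{(F(y_i)-F(y_k))(y_j-y_k)}(c(z_i)-c(z_k))\ge c(z_i)-c(z_k)$, the last step again by convexity of $F$ (namely $\tfrac{F(y_j)-F(y_k)}{y_j-y_k}\ge\tfrac{F(y_i)-F(y_k)}{y_i-y_k}$) together with $c(z_i)-c(z_k)\ge 0$. Hence the left-hand side is strictly positive and the right-hand side nonpositive, so the inequality holds and $\mathcal{P}(u_a)>\mathcal{P}(u)$ for $a$ close enough to $(1,0)$.

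The only genuinely fiddly step is computing the area of the clipped region $X_k\cap X_i^a$: the intersection with $\{x_2=0\}$ breaks the symmetry between the two pieces of $X_i^a$ that was available in Lemmas~\ref{between1} and \ref{between2}. Once these areas are in hand, the positivity reduces --- via \ref{condition2} and the convexity of $F$ --- to inequalities of exactly the flavour of \eqref{H1}.
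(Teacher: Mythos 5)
Your proposal is correct, and it uses the same perturbation as the paper (lowering the price of $z_i$ so that the new region $X_i^a$ has its apex at a point $a$ on the $X_k$--$X_j$ indifference segment, approaching the corner), but it closes the argument by a genuinely different route. The paper's proof of Lemma \ref{between3} reduces to the sign analysis of expression \eqref{exp} and then, depending on the signs of $y_j-y_i-(c(z_j)-c(z_i))$ and $y_i-y_k-(c(z_i)-c(z_k))$, extends the density $f$ beyond $[0,1]^2$ so as to enlarge one of the two pieces of $X_i^a$ into a full (unclipped) triangle, thereby falling back on the estimates \eqref{c eq} and \eqref{c eq2} from Lemmas \ref{between1} and \ref{between2}. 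You instead exploit the corner geometry directly: both pieces of $X_i^a$ are exact triangles of area $\Theta(\epsilon^2)$ whose areas you compute in closed form ($\tfrac12(m_{kj}-m_j)\epsilon^2$ and $\tfrac{m_{kj}(m_k-m_{kj})}{2m_k}\epsilon^2$, both of which I have checked), and the positivity of the leading coefficient reduces to a single algebraic inequality in the ratio $R=\frac{(y_i-y_k)(F(y_j)-F(y_k))}{(F(y_j)-F(y_i))(y_j-y_k)}$, which follows cleanly from \ref{condition2}, the convexity of $F$, and monotonicity of $c$. A pleasant feature of your computation is that, unlike in Lemmas \ref{between1}--\ref{between2} where the $y$-contributions cancel at leading order and one must go to the $c$-terms at the next order, here the asymmetric clipping makes the $y$-contribution itself nonnegative at order $\epsilon^2$, so no higher-order expansion or change of variables is needed; this buys a shorter, self-contained argument that avoids extending $f$ outside the domain. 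The only caveat, which applies equally to the paper's own proof, is the implicit assumption that for small $\epsilon$ the new region satisfies $X_i^a\subseteq \overline{X_k}\cup\overline{X_j}$, i.e., that no third region accumulates at the corner $(1,0)$; this is handled at the same level of rigor as in Lemma \ref{between1} via Lemma \ref{partition}.
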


\begin{proof}

Let $k<i<j$ such that $\mu(X_i)=0.$ We perturb $u$ similarly to  Lemmas \ref{case1}, \ref{case2}, to get

\[\begin{array}{ll} &\mathcal{P}(u_a)-\mathcal{P}(u) \vspace{3pt}\\
      =& (-a_2(F(y_j)-F(y_i))-a_1(y_j-y_i)+(c(z_j)-c(z_i)))\mu( X_j\cap X_i^a)
      \vspace{3pt}\\
      &+(a_2(F(y_i)-F(y_k))+a_1(y_i-y_k)-(c(z_i)-c(z_k)))\mu( X_k\cap X_i^a)\vspace{3pt}\\
      
      =&(-a_2(F(y_j)-F(y_i))-(1-a_2\frac{F(y_j)-F(y_k)}{y_j-y_k})(y_j-y_i)+(c(z_j)-c(z_i)))\mu( X_j\cap X_i^a)\vspace{3pt}\\
      &+(a_2(F(y_i)-F(y_k))+(1-a_2\frac{F(y_j)-F(y_k)}{y_j-y_k})(y_i-y_k)-(c(z_i)-c(z_k)))\mu( X_k\cap X_i^a)\vspace{3pt}\\
      \end{array}\]
    where $a_1=1-a_2\frac{F(y_j)-F(y_k)}{y_j-y_k}$ as $a$ moves on the indifference line between $X_k$ and $X_j.$ For small enough $a_2,$ it is sufficient to study the sign of 
    \begin{equation}\label{exp}-(y_j-y_i-(c(z_j)-c(z_i)))\mu( X_j\cap X_i^a)+(y_i-y_k-(c(z_i)-c(z_k)))\mu( X_k\cap X_i^a)).\end{equation}
\begin{enumerate}

\item Assume $y_j-y_i-(c(z_j)-c(z_i))>0.$ We extend $f$ to $[0,2]^2$ such that $\alpha\le f\le \|f\|_\infty$ and let $X_{i,j}^a=(X_j\cap X_i^a)\cup B^a$ where $B^a=\{(x_1,x_2)\in [0,2]^2:\, x_1>1 \text{ and } x_2<-\frac{y_j-y_i}{F(y_j)-F(y_i)}(x_1-a_1)+a_2\}.$ Then,
\[\begin{array}{ll}
&-(y_j-y_i-(c(z_j)-c(z_i)))\mu( X_j\cap X_i^a)+(y_i-y_k-(c(z_i)-c(z_k)))\mu( X_k\cap X_i^a)\vspace{3pt}\\
\ge&-(y_j-y_i-(c(z_j)-c(z_i)))\mu( X_{j,i}^a)+(y_i-y_k-(c(z_i)-c(z_k)))\mu( X_k\cap X_i^a)
\end{array}\] which is similar to expression \eqref{c eq} and similarly we prove 
\[-(y_j-y_i-(c(z_j)-c(z_i)))\mu( X_{j,i}^a)+(y_i-y_k-(c(z_i)-c(z_k)))\mu( X_k\cap X_i^a)>0.\]

\item Assume that $y_j-y_i-(c(z_j)-c(z_i))\le0.$ If $y_i-y_k-(c(z_i)-c(z_k))>0,$ then expression \eqref{exp}  is positive. If $y_i-y_k-(c(z_i)-c(z_k))\le0,$ we extend $f$ to $[0,1]\times[-2,2]$ such that $\alpha\le f\le \|f\|_\infty$ and let $X_{i,k}^a=(X_k\cap X_i^a)\cup B_a$ where $B_a=\{(x_1,x_2)\in [0,1]\times[-2,2]:\, x_2<0 \text{ and } x_2>-\frac{y_i-y_k}{F(y_i)-F(y_k)}(x_1-a_1)+a_2\}.$
Then,
\[\begin{array}{ll}
&-(y_j-y_i-(c(z_j)-c(z_i)))\mu( X_j\cap X_i^a)+(y_i-y_k-(c(z_i)-c(z_k)))\mu( X_k\cap X_i^a))\vspace{3pt}\\
\ge&-(y_j-y_i-(c(z_j)-c(z_i)))\mu(X_j\cap X_i^a)+(y_i-y_k-(c(z_i)-c(z_k)))\mu( X_{i,k}^a)
\end{array}\] which is similar to expression \eqref{c eq2} and similarly we prove 
\[-(y_j-y_i-(c(z_j)-c(z_i)))\mu(X_j\cap X_i^a)+(y_i-y_k-(c(z_i)-c(z_k)))\mu( X_{i,k}^a)>0.\]
\end{enumerate}
This proves that expression \eqref{exp} is positive which implies $\mathcal{P}(u_a)-\mathcal{P}(u)>0$ for small enough $a_2,$ and hence $u$ is not a solution.
\end{proof}

Now we prove Theorem \ref{between}:
\begin{proof}
    Let $X_p$ and $X_s$ regions of $u$ such that they have positive masses. Suppose that  there exists $i$ where $p<i<s$ and $X_i$ has zero mass. Then, using Lemma \ref{partition} the assumptions in one of the above Lemmas \ref{between1}, \ref{between2}, \ref{between3} are satisfied, and so $u$ is not a solution, which proves the theorem. 
\end{proof}
We next state and prove a lemma about the structure arising from utility functions leading to intersecting indifference curves.
\begin{lem}\label{triangle}
   Let $u\in \mathcal{U}$ and suppose that two segments of indifference points intersect in $X.$ Then, there exists a region $X_i$ which shares boundary segments with only two adjacent regions and these segments intersect in $X.$ Moreover, at least one of the following is true:
   \begin{enumerate}
       \item The boundary segments intersect $[0,1]\times\{0\}$.
       \item  The boundary segments intersect $\{1\}\times[0,1]$. 
\item One of the boundary segments intersects $[0,1]\times\{0\}$ and the other intersects $\{1\}\times[0,1]$.

   \end{enumerate}
\end{lem}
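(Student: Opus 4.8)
The plan is to exploit the geometric rigidity of the indifference segments. Recall that every indifference segment between regions $X_p$ and $X_q$ lies on a line perpendicular to $z_p - z_q$; since $F$ is increasing and convex, the directions $z_p - z_q = (y_p - y_q,\, F(y_p)-F(y_q))$ all have slope in $(0, F'(\tilde y))$, so every indifference segment has slope $-1/s$ with $0 < s < F'(\tilde y)$ — in particular, a strictly negative slope bounded away from $0$ and $-\infty$. Moreover, along any horizontal line $x_2 = \text{const}$ the function $x_1 \mapsto u(x_1,\cdot)$ is convex, so its derivative $\partial_{x_1} u$, which equals $y$-coordinate of $Du$, is nondecreasing in $x_1$; combined with the fact that $Du$ takes the constant value $z_i$ precisely on $X_i$, this forces the regions $X_i$ to be encountered in order of increasing index as $x_1$ increases along such a line. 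An analogous statement holds along vertical lines using convexity in $x_2$. This is exactly the mechanism already used in the proof of Lemma \ref{partition}.

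First I would take a point $q \in X$ where two indifference segments cross, say a segment of $\overline{X_a}\cap\overline{X_b}$ and a segment of $\overline{X_c}\cap\overline{X_d}$. By the slope/monotonicity discussion above, near $q$ the plane is divided by these two lines into four sectors, and in each sector $Du$ is constant on an open set — but the ordering constraint (indices increase left-to-right, and also bottom-to-top) severely restricts which index labels can appear, and in fact forces one of the four sectors at $q$ to be a bounded region $X_i$ whose only boundary in the interior of $X$ consists of the two crossing segments; intuitively, $X_i$ is the "wedge" pinched between the two lines on the side where the index is squeezed to a single value. This is the content of the first assertion. To make this precise I would argue by contradiction exactly as in Lemma \ref{partition}: if no such pinched region existed, one could find a horizontal or vertical segment crossing the configuration along which $\partial_{x_1}u$ (resp. $\partial_{x_2}u$) is forced to decrease, contradicting convexity of $u$.

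Next, with $X_i$ the pinched region bounded by the two crossing segments meeting at $q$, I would observe that $X_i$ is bounded and its topological boundary is covered by (a) the two indifference segments and (b) possibly a portion of $\partial X = \partial([0,1]^2)$. The two segments emanate from $q$ in two directions; extending each until it exits $[0,1]^2$, each must hit $\partial([0,1]^2)$. Since both segments have strictly negative slope bounded in $(-\infty,0)$, and since the region $X_i$ is the wedge pinched between them on one side of $q$, the endpoints of the two segments on $\partial X$ lie on the boundary of $\overline{X_i}$. The negative-slope constraint rules out the top side $[0,1]\times\{1\}$ for the lower of the two segments and rules out the left side $\{0\}\times[0,1]$ for the upper one in the relevant orientation; a short case analysis of which side of $q$ the wedge $X_i$ lies on, together with the sign of the slopes, leaves exactly the three listed possibilities: both endpoints on the bottom side, both on the right side, or one on the bottom and one on the right (the corner $(1,0)$ being shared by the bottom and right sides is absorbed into these cases).

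The main obstacle I anticipate is the first step — rigorously pinning down the combinatorics of which region becomes the pinched triangle $X_i$, since a priori several regions $X_k$ with different indices could accumulate near $q$ and one must rule out degenerate configurations (e.g. three or more indices meeting at $q$, or a crossing of two segments that both separate the \emph{same} pair of regions). I expect the convexity-of-$u$ argument along horizontal and vertical probe lines, run carefully in all orientations as in Lemma \ref{partition}, to handle these, but bookkeeping the orientations is where the real work lies; the boundary-intersection trichotomy in the second step should then follow by the slope bounds with only a routine case check.
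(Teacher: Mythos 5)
Your overall strategy (find a region pinched between two crossing segments, then use the negative-slope bounds and Lemma \ref{partition} to locate the boundary endpoints) is in the right spirit, but there is a genuine gap at the step you yourself flag as the main obstacle, and it is not merely bookkeeping. Working at an \emph{arbitrary} crossing point $q$, your local sector analysis can at best identify a region $X_i$ that is pinched between the two segments \emph{near} $q$; it does not show that the entire interior boundary of $X_i$ consists of only those two segments. The wedge below $q$ could be further subdivided: the two segments emanating from $q$ could each terminate at \emph{other} intersection points before reaching $\partial X$, so that $X_i$ is adjacent to three or more regions and its boundary segments do not run all the way to $[0,1]\times\{0\}$ or $\{1\}\times[0,1]$. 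In that case both the ``only two adjacent regions'' clause and the trichotomy fail for your chosen $X_i$.

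The missing idea is an extremal selection: choose the intersection point with the \emph{smallest second coordinate}. The paper then shows (by the same convexity-of-$u$ probe argument along horizontal lines that you invoke, plus the fact that at least three segments meet at any intersection point by convexity of the $X_k$) that two of the segments through this point have it as their common left endpoint with negative slopes, so some region $X_i$ has this point as the unique maximizer of $x_2$ over $\overline{X_i}$. Because every indifference segment has strictly negative slope, any further vertex of $\partial X_i$ inside $X$ would be an intersection point with strictly smaller second coordinate, contradicting minimality; hence $X_i$ has exactly two interior boundary segments, and these descend uninterrupted until they hit the bottom or right side of the square by Lemma \ref{partition}. Adding this extremal choice (or an equivalent one) is necessary to close your argument; without it the trichotomy in your second step is not justified.
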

\begin{proof} Consider the intersection $(\overline{x}_1,\overline{x}_2)$ with the smallest second component. We claim that there exists a region $X_i$ such that  $(\overline{x}_1,\overline{x}_2)\in\overline{X_i}$ and $\overline{x}_2=\max\{x_2:\, (x_1,x_2)\in \overline{X_i} \text{ for some } 0\le x_1\le1\}$.  We will prove that two of the segments that pass through $(\overline{x}_1,\overline{x}_2)$ have $(\overline{x}_1,\overline{x}_2)$ as their left end point.

    Suppose that two of the segments have $(\overline{x}_1,\overline{x}_2)$ as the right end point. If one of the segments has a positive slope, this would imply that there exists $z_p$ such that $z_i-z_p$ has negative slope which contradicts the fact that $F$ is increasing. Hence, both segments have negative slopes.
    
    From the convexity of the sets $(X_k),$ we conclude that there are two segments that are boundary segments of some set  $X_j$ with   $(\overline{x}_1,\overline{x}_2)$ as the point with the lowest second component in its closure. By a similar argument to the one in the proof of Lemma \ref{partition} we can find $\beta\in[0,1]$ such that $\frac{\partial u}{\partial x_1}(x_1,\beta)$ decreases in some neighborhood of $\overline{x}_1$ which contradicts the convexity of $u.$ This implies that we have up to one segment with $(\overline{x}_1,\overline{x}_2)$ as the right endpoint. From the convexity of sets $(X_s),$ there are at least three segments that pass through each intersection point and we conclude that there exist two segments with $(\overline{x}_1,\overline{x}_2)$ as the left endpoint.
    
    Then, two of the segments have $(\overline{x}_1,\overline{x}_2)$ as the left end point with negative slopes which implies the existence of $X_i$ with $(\overline{x}_1,\overline{x}_2)$ as the point with the greatest second component in its closure. $X_i$ has only two adjacent regions, because if not the boundary of $X_i$ would have at least three segments in $X$, and so we can find another intersection with lower second component than $(\overline{x}_1,\overline{x}_2).$ Hence, by Lemma \ref{partition}, both segments decrease until they intersect $([0,1]\times\{0\}) \cup(\{1\}\times[0,1])$ giving us one of the three cases stated above.
    \end{proof}
Our proof of Theorem \ref{thm: no intersection}, that indifference curves cannot intersect is split into several cases, based on where the line segments in the region described in the preceding lemma intersect the boundary.
\begin{lem}\label{case1}
         Suppose that $u \in \mathcal{U}$ and $X_i =(Du)^{-1}(z_i)$ shares a boundary with only two other regions, $X_{i-1}$ and $X_{i+1}$.  Suppose that the two indifference curves intersect within $\overline{X}$, and that both intersect $[0,1]\times\{0\}$.  Then, under hypothesis   \ref{condition3}, $u$ is not a solution to the monopolist's problem.   
    \end{lem}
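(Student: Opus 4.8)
The plan is to exhibit a one‑parameter family of feasible competitors to $u$ along which $\mathcal{P}$ is, under \ref{condition3}, strictly increased, so that $u$ cannot be a solution. By Lemma~\ref{triangle} we may assume $X_i$ is a triangle whose apex $e=(e_1,e_2)$ is the highest point of $\overline{X_i}$, with base on $[0,1]\times\{0\}$ and sides the indifference segments shared with $X_{i-1}$ and $X_{i+1}$, of slopes $-1/s_-$ and $-1/s_+$, where $s_-:=\tfrac{F(y_i)-F(y_{i-1})}{y_i-y_{i-1}}<s_+:=\tfrac{F(y_{i+1})-F(y_i)}{y_{i+1}-y_i}$ by convexity of $F$; since $X_i$ abuts only $X_{i-1}$ and $X_{i+1}$, above $e$ these two regions are separated by an indifference segment of slope $-1/s_m$ with $s_m:=\tfrac{F(y_{i+1})-F(y_{i-1})}{y_{i+1}-y_{i-1}}\in(s_-,s_+)$.

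For small $\epsilon$, put $a:=e+\epsilon(-s_m,1)$ — the point at height $e_2+\epsilon$ on the $X_{i-1}$--$X_{i+1}$ indifference line — and change $v(z_i)$ to $v(z_i)-\delta(\epsilon)$ with $\delta(\epsilon):=(e-a)\cdot(z_i-z_{i-1})=\epsilon\,(y_i-y_{i-1})(s_m-s_-)=\epsilon\,(y_{i+1}-y_i)(s_+-s_m)$, obtaining $u_a\in\mathcal{U}$, still feasible for \eqref{PA}. For $\epsilon>0$ the price of $z_i$ drops, the region $X_i^a$ of buyers of $z_i$ is again a triangle with apex $a$ containing $X_i$, and writing $\Delta X^-$ (resp.\ $\Delta X^+$) for the part of $X_i^a\setminus X_i$ lying in the old $X_{i-1}$ (resp.\ $X_{i+1}$), a short computation shows the integrand of $\mathcal{P}$ changes by the \emph{constant} $-\delta(\epsilon)$ on $X_i$, by $A-\delta(\epsilon)$ on $\Delta X^-$, and by $-B-\delta(\epsilon)$ on $\Delta X^+$, where $A:=(v(z_i)-c(z_i))-(v(z_{i-1})-c(z_{i-1}))$ and $B:=(v(z_{i+1})-c(z_{i+1}))-(v(z_i)-c(z_i))$; hence
\[
\mathcal{P}(u_a)-\mathcal{P}(u)=-\delta(\epsilon)\,\mu(X_i)+(A-\delta(\epsilon))\,\mu(\Delta X^-)-(B+\delta(\epsilon))\,\mu(\Delta X^+),
\]
and raising $v(z_i)$ (the case $\epsilon<0$) gives the same expansion to the orders used below.

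Now expand in $\epsilon$. With $\delta(\epsilon)=c_\delta\epsilon$ ($c_\delta>0$) and $\mu(\Delta X^\pm)=w_\pm\epsilon+O(\epsilon^2)$, $w_\pm>0$, the first-order coefficient is $-c_\delta\mu(X_i)+Aw_--Bw_+$; if this is nonzero, the appropriate sign of $\epsilon$ improves $\mathcal{P}$ and we are done (without \ref{condition3}; this is just the usual first-order condition on the apex, cf.\ the decoupled equations of Section~\ref{subsect: explicit solution}). Otherwise $\mathcal{P}(u_a)-\mathcal{P}(u)=K\epsilon^2+o(\epsilon^2)$ and it suffices to show $K>0$, for then every small $\epsilon\ne0$ improves $\mathcal{P}$. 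Here $K$ is a \emph{negative} term $-c_\delta(w_-+w_+)$ (the price change acting on the $O(\epsilon)$ slivers) plus the $\epsilon^2$-part of $A\mu(\Delta X^-)-B\mu(\Delta X^+)$, which is governed by: the ``top triangle'' near $e$ of area $\sim\epsilon^2$, split by the slope-$(-1/s_m)$ line in the ratio $(s_m-s_-):(s_+-s_m)$; the taper of the two slivers; and the oscillation of $f$ across them, bounded by the affine change of variables matching the slivers exactly as in the proof of Lemma~\ref{between1}, which generates $\|f_{x_1}\|_\infty$ and the weight $1+\tfrac{F(y_{i+1})-F(y_i)}{y_{i+1}-y_i}+\tfrac{c(z_{i+1})-c(z_i)}{y_{i+1}-y_i}$. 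Using $f\ge\alpha$ on the favourable terms, $K$ is at least a positive constant times $\alpha\,\big(\tfrac{c(z_{i+1})-c(z_i)}{y_{i+1}-y_i}-\tfrac{c(z_i)-c(z_{i-1})}{y_i-y_{i-1}}\big)\big/\big(\tfrac{F(y_{i+1})-F(y_i)}{y_{i+1}-y_i}-\tfrac{F(y_i)-F(y_{i-1})}{y_i-y_{i-1}}\big)$ minus the $\|f\|_\infty$- and $\|f_{x_1}\|_\infty$-terms above, so \ref{condition3} is exactly the inequality that forces $K>0$. The main obstacle is precisely this last bookkeeping — carrying all the $\epsilon^2$-contributions (the two halves of the top triangle, the sliver taper, the $f$-oscillation via the change of variables) and bounding them sharply enough that \ref{condition3} suffices; it is somewhat heavier than the corresponding step in Lemma~\ref{between1} because here the perturbed region has positive first-order measure, so the first-order analysis genuinely precedes the decisive second-order one.
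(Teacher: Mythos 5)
Your setup is right and matches the paper's: the competitor is a pure price change of good $z_i$, and your exact identity $\mathcal{P}(u_a)-\mathcal{P}(u)=-\delta\,\mu(X_i)+(A-\delta)\mu(\Delta X^-)-(B+\delta)\mu(\Delta X^+)$ with $A=v_i-c_i-(v_{i-1}-c_{i-1})$, $B=v_{i+1}-c_{i+1}-(v_i-c_i)$ is precisely the paper's formula \eqref{optcond1} after noting that $x\cdot(z_i-z_{i-1})=v_i-v_{i-1}$ along $L^0_{i-1}$ and $x\cdot(z_i-z_{i+1})=-(v_{i+1}-v_i)$ along $L^0_{i}$. The gap is in what you do with it. The entire content of the lemma --- and the only place \ref{condition3} enters --- is to show that the \emph{first-order} coefficient $-\mu(X_i)+\frac{A}{|z_i-z_{i-1}|}\int_{L^0_{i-1}}f\,d\mathcal H^{1}-\frac{B}{|z_i-z_{i+1}|}\int_{L^0_{i}}f\,d\mathcal H^{1}$ is strictly \emph{positive}, so that the ``otherwise'' branch of your dichotomy is vacuous. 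The mechanism is the one thing your proposal never uses at first order: because the two indifference segments meet at a point $a=(a_1,a_2)$ of $\overline X$ and both run down to $[0,1]\times\{0\}$, the region $X_i$ is a triangle of area $O(a_2^2)$, hence $\mu(X_i)\le \|f\|_\infty\frac{a_2^2}{2}\bigl(\frac{F(y_{i+1})-F(y_i)}{y_{i+1}-y_i}-\frac{F(y_i)-F(y_{i-1})}{y_i-y_{i-1}}\bigr)$, while the two boundary integrals, after pairing them and controlling the oscillation of $f$ by $\|f_{x_1}\|_\infty$ times the horizontal gap between the two segments, leave a strictly positive term $\alpha\,a_2\bigl(\frac{c(z_{i+1})-c(z_i)}{y_{i+1}-y_i}-\frac{c(z_i)-c(z_{i-1})}{y_i-y_{i-1}}\bigr)$ of order $a_2$, not $a_2^2$. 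Hypothesis \ref{condition3} is exactly the inequality that lets the linear-in-$a_2$ term dominate the quadratic ones (using $a_2^2\le a_2$).

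By deferring \ref{condition3} to a putative second-order coefficient $K$, you have relocated all of the work to a step that is neither carried out nor, as described, correct: at second order in $\epsilon$ the expansion contains the genuinely negative term $-c_\delta(w_-+w_+)$ with no small parameter to beat it, and the quantity $\alpha\bigl(\frac{c(z_{i+1})-c(z_i)}{y_{i+1}-y_i}-\frac{c(z_i)-c(z_{i-1})}{y_i-y_{i-1}}\bigr)$ that you claim appears in $K$ in fact lives in the first-order coefficient (it is the $O(a_2)$ part of $Aw_--Bw_+$), so it cannot rescue the second order. There is no reason to expect $K>0$ at a critical point --- indeed the uniqueness argument in Appendix~\ref{uniqueness} relies on $\frac{\partial^2\mathcal P}{\partial t_i^2}\le 0$ at maximizers. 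To repair the proof, drop the dichotomy, expand the three terms of the first-order coefficient in the apex height $a_2$ as above, and verify that \ref{condition3} forces it to be strictly positive; that is the paper's argument.
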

\begin{proof}We will show that lowering the prices of good $y_i$ by $\epsilon$ strictly increases profits.  Let $v_j = \max_{x \in X}x \cdot z_j - u(x)$ be the pricing schedule corresponding to $u$ and change $v_i \rightarrow v_i -\epsilon$, while leaving the other prices $v_j, j \neq i$ unchanged.

Letting $X_i^\epsilon$ be the region of consumers who choose good $z_i$ under the new price schedule.  We then have
\[\begin{array}{ll}
    \mathcal P(v^\epsilon) = \mathcal P(v)&-\int_{X_i^\epsilon} \epsilon f(x) dx\\
    &+ \int_{X_{i}^\epsilon \cap X_{i+1}}[x \cdot (z_{i}-z_{i+1}) - (u_{i}+\epsilon-u_{i+1}) -c(z_{i}) +c(z_{i+1}) ]f(x)dx \\
    &+ \int_{X_{i}^\epsilon \cap X_{i-1}} [x \cdot (z_{i}-z_{i-1}) - (u_{i}+\epsilon-u_{i-1}) -c(z_{i}) +c(z_{i-1})]
    f(x) dx 
\end{array}\]
where $u_j=u(x)$ for all $x\in X_j.$
 We differentiate this expression with respect to $\epsilon$.  Note that as $\epsilon$ varies, the region $X_i^\epsilon$ expands outward along its boundary curves 
$$
 L_i^\epsilon=X^N_=(y_i, v_{i+1} +\epsilon -v_{i})\cap \overline{ X^\epsilon_i} =\{x: x\cdot (z_{i+1}-z_i) =v_{i+1} +\epsilon -v_{i}\} \cap \overline{ X^\epsilon_i} \subseteq \overline{  X_{i}^\epsilon} \cap \overline{X_{i+1}}
 $$ 
 and 
 $$
 L_{i-1}^\epsilon= X^N_=(y_{i-1}, v_{i} -\epsilon -v_{i-1})\cap \overline{ X^\epsilon_i} =\{x: x\cdot (z_{i}-z_{i-1}) =v_{i} -\epsilon -v_{i-1}\} \cap \overline{ X^\epsilon_i} \subseteq\overline{  X_{i}^\epsilon} \cap \overline{X_{i-1}}
 $$
 with outward unit normal speeds $\frac{1}{|z_i-z_{i+1}|}$ and $\frac{1}{|z_i-z_{i-1}|}$, respectively.  A standard formula from the calculus of moving boundaries then yields 
 \[\begin{array}{ll}
     &\frac{d}{d\epsilon} \mathcal P(v^\epsilon)  =\vspace{3pt}\\
     &-\int_{X_i^\epsilon}f(x)dx -\int_{L^\epsilon_i }\epsilon f(x)\frac{1}{|z_i-z_{i+1}|}d\mathcal H^{m-1}(x) 
     -\int_{L^\epsilon_{i-1}}\epsilon f(x)\frac{1}{|z_i-z_{i-1}|}d\mathcal H^{m-1}(x) \vspace{3pt}\\
     &-\int_{X_{i}^\epsilon \cap X_{i+1}}f(x)dx -\int_{X_{i}^\epsilon \cap X_{i-1}}f(x)dx\vspace{3pt}\\
     &+\int_{L^\epsilon_{i}}[x \cdot (z_{i}-z_{i+1}) - (u_{i}+\epsilon-u_{i+1}) -c(z_{i}) +c(z_{i+1}) ]f(x)\frac{1}{|z_i-z_{i+1}|}d\mathcal H^{m-1}(x)\vspace{3pt}\\
     &+
     \int_{L^\epsilon_{i-1}}[x \cdot (z_{i}-z_{i-1}) - (u_{i}+\epsilon-u_{i-1}) -c(z_{i}) -+(z_{i-1}) ]f(x)\frac{1}{|z_i-z_{i-1}|}d\mathcal H^{m-1}(x)
 \end{array}
 \]
Noting that $u_i=u_{i+1}$ and $u_i =u_{i-1}$ along the appropriate respective indifference curve, and that the volumes of the regions $X_{i}^\epsilon \cap X_{i+1}$ and $X_{i}^\epsilon \cap X_{i-1}$  dwindle to $0$ as $\epsilon \rightarrow 0$, we set $\epsilon =0$ to obtain
\begin{eqnarray}\label{optcond1}
         \frac{d}{d\epsilon}\Big|_{\epsilon=0} \mathcal P(v^\epsilon)  =
         &-&\int_{X_i}f(x)dx\label{derivative} \\
      &+&\int_{L^0_{i}}[x \cdot (z_{i}-z_{i+1})-c(z_{i}) +c(z_{i+1}) ]f(x)\frac{1}{|z_i-z_{i+1}|}d\mathcal H^{m-1}(x) \nonumber\\
&+&     \int_{L^0_{i-1}}[x \cdot (z_{i}-z_{i-1})-c(z_i) + c(z_{i-1}) ]f(x)\frac{1}{|z_i-z_{i-1}|}d\mathcal H^{m-1}(x) \nonumber
\end{eqnarray}
Now, let $a=(a_1,a_2)$ be the intersection point of the indifference regions $X^N_=(y_{i-1}, v_{i} - v_{i-1})$ and $X^N_=(y_i, v_{i+1} - v_{i})$.

Since both indifference curves reach axis $ [0,1]\times\{0\} $ by assumption, we can parametrize them  $(x^{i-1}_1(x_2),x_2)$ and $(x^i_1(x_2),x_2)$ by $x_ 2 \in [0,a_2]$ and since the line segment $X^N_=(y_i, v_{i+1} - v_{i})$ is orthogonal to $z_{i+1}-z_i =( y_{i+1},F(y_{i+1})) -(y_{i},F(y_{i})) $, the slope of $x_1^i$ is $\frac{F(y_{i+1})-F(y_i)}{y_{i+1} -y_i}$, and $1$-dimensional Hausdorff measure (ie, arclength) along it is given by $d\mathcal H^{m-1}(x) = \sqrt{\big(\frac{F(y_{i+1})-F(y_i)}{y_{i+1} -y_i}\big)^2+1}dx_2$, so that 
\[\hspace{-0.4cm}\begin{array}{ll}
&\int_{L_i^0}[x \cdot (z_{i}-z_{i+1})-c(z_{i}) +c(z_{i+1}) ]f(x)\frac{1}{|z_i-z_{i+1}|}d\mathcal H^{m-1}(x)\vspace{3pt}\\
&=[a \cdot (z_{i}-z_{i+1})-c(z_{i}) +c(z_{i+1}) ]\frac{1}{|z_i-z_{i+1}|}\sqrt{\big(\frac{F(y_{i+1})-F(y_i)}{y_{i+1} -y_i}\big)^2+1}\int_0^{a_2} f(x^i_1(x_2),x_2)dx_2\vspace{3pt}\\
&=[a \cdot (z_{i}-z_{i+1})-c(z_{i}) +c(z_{i+1}) ]\frac{1}{y_{i+1}-y_{i}}\int_0^{a_2} f(x^i_1(x_2),x_2)dx_2\vspace{3pt}\\
&= [-a_1 -a_2 \frac{F(y_{i+1})-F(y_i)}{y_{i+1}-y_{i}} +\frac{c(z_{i+1}) -c(z_{i})}{y_{i+1}-y_{i}}]\int_0^{a_2} f(x^i_1(x_2),x_2)dx_2,
\end{array}\]
where  $a\cdot(z_{i+1}-z_{i})=x\cdot(z_{i+1}-z_{i})$ along $L_i^0.$
Similarly,
\begin{eqnarray*}
\int_{L^0_{i-1}}[x \cdot (z_{i}-z_{i-1})-c(z_i) + c(z_{i-1}) ]f(x)\frac{1}{|z_i-z_{i-1}|}d\mathcal H^{m-1}(x)\\
=[a_1 +a_2 \frac{F(y_{i})-F(y_{i-1})}{y_{i}-y_{i-1}} -\frac{c(z_{i}) -c(z_{i-1})}{y_{i}-y_{i-1}}]\int_0^{a_2} f(x_1^{i-1}(x_2),x_2)dx_2.
\end{eqnarray*}

 We can therefore rewrite \eqref{derivative} as 
\begin{equation}\label{eqn: derivative computation}
\begin{array}{ll}
         &\frac{d}{d\epsilon}\Big|_{\epsilon=0} \mathcal P(v^\epsilon)  =\vspace{3pt}\\
         &-\int_{X_i}f(x)dx -a_1\Big[\int_0^{a_2} f(x^i_1(x_2),x_2)dx_2-\int_0^{a_2} f(x_1^{i-1}(x_2),x_2)dx_2\Big] \vspace{3pt}\\
&-a_2\Big[\frac{F(y_{i+1})-F(y_i)}{y_{i+1}-y_{i}} \int_0^{a_2} f(x^i_1(x_2),x_2)dx_2 - \frac{F(y_{i})-F(y_{i-1})}{y_{i}-y_{i-1}}\int_0^{a_2} f(x_1^{i-1}(x_2),x_2)dx_2\Big]      \vspace{3pt}\\
&+     \frac{c(z_{i+1}) -c(z_{i})}{y_{i+1}-y_{i}}\int_0^{a_2} f(x^i_1(x_2),x_2)dx_2 -\frac{c(z_{i}) -c(z_{i-1})}{y_{i}-y_{i-1}}\int_0^{a_2} f(x_1^{i-1}(x_2),x_2)dx_2. 
\end{array}
\end{equation}

We bound the first term of \eqref{eqn: derivative computation} below by 
\begin{equation}\label{X_i mass bound}
-\int_{X_i}f(x)dx\ge -\|f\|_\infty\frac{a_2^2}{2}\Big(\frac{F(y_{i+1})-F(y_i)}{y_{i+1}-y_i}-\frac{F(y_i)-F(y_{i-1})}{y_i-y_{i-1}}\Big).
\end{equation}

Turning to the second term in \eqref{eqn: derivative computation}, since $a_1\le1,$ we have

\[
        \begin{array}{ll}
        &-a_1\Big(\int_0^{a_2}f(x_1^{i}(x_2),x_2)-f(x_1^{i-1}(x_2),x_2)dx_2\Big)\vspace{3pt}\\
        &\ge - \|f_{x_1}\|_\infty\int_0^{a_2}\Big(\frac{F(y_{i+1})-F(y_i)}{y_{i+1}-y_i}-\frac{F(y_{i})-F(y_{i-1})}{y_{i}-y_{i-1}}\Big)(a_2-x_2)dx_2\vspace{3pt}\\
        &\ge-\|f_{x_1}\|_\infty\Big(\frac{F(y_{i+1})-F(y_i)}{y_{i+1}-y_i}-\frac{F(y_{i})-F(y_{i-1})}{y_{i}-y_{i-1}}\Big)\frac{a_2^2}{2}.
        \end{array}\]
For the third term in \eqref{eqn: derivative computation}, we have 
\[\begin{array}{ll}
          &-a_2\Big(\frac{F(y_{i+1})-F(y_i)}{y_{i+1}-y_i}\int_0^{a_2}f(x_1^{i}(x_2),x_2)dx_2-\frac{F(y_i)-F(y_{i-1})}{y_i-y_{i-1}}\int_0^{a_2}f(x_1^{i-1}(x_2),x_2)dx_2\Big)\vspace{3pt}\\
        =&-a_2\Bigg(\frac{F(y_{i+1})-F(y_i)}{y_{i+1}-y_i}\int_0^{a_2}(f(x_1^{i}(x_2),x_2)-f(x_1^{i-1}(x_2),x_2))dx_2\vspace{3pt}\\
        &+\Big(\frac{F(y_{i+1})-F(y_i)}{y_{i+1}-y_i}-\frac{F(y_i)-F(y_{i-1})}{y_i-y_{i-1}}\Big)\int_0^{a_2}f(x_1^{i-1}(x_2),x_2)dx_2\Bigg)\vspace{3pt}\\
        \ge&- \frac{F(y_{i+1})-F(y_i)}{y_{i+1}-y_i}\|f_{x_1}\|_\infty\Big(\frac{F(y_{i+1})-F(y_i)}{y_{i+1}-y_i}-\frac{F(y_{i})-F(y_{i-1})}{y_{i}-y_{i-1}}\Big)\frac{a_2^2}{2}\vspace{3pt}\\
        &-\|f\|_\infty a_2^2\Big(\frac{F(y_{i+1})-F(y_i)}{y_{i+1}-y_i}
        -\frac{F(y_i)-F(y_{i-1})}{y_i-y_{i-1}}\Big),
        \end{array}\]
and we use $a_2^3\le a_2^2$ as $a_2\le1.$

 For the last term in \eqref{eqn: derivative computation}, we have
\[\hspace{-6pt}\begin{array}{ll}
    &\hspace{-8pt}\frac{c(z_{i+1})-c(z_i)}{y_{i+1}-y_i}\int_0^{a_2}f(x_1^{i}(x_2),x_2)dx_2-\frac{c(z_{i})-c(z_{i-1})}{y_i-y_{i-1}}\int_0^{a_2}f(x_1^{i-1}(x_2),x_2)dx_2\vspace{3pt}\\
    =&\hspace{-8pt}\frac{c(z_{i+1})-c(z_i)}{y_{i+1}-y_i}\int_0^{a_2}(f(x_1^{i}(x_2),x_2)-f(x_1^{i-1}(x_2),x_2))dx_2\vspace{3pt}\\
    &\hspace{-8pt}+\Big(\frac{c(z_{i+1})-c(z_i)}{y_{i+1}-y_i}-\frac{c(z_{i})-c(z_{i-1})}{y_i-y_{i-1}}\Big)\int_0^{a_2}f(x_1^{i-1}(x_2),x_2)dx_2\vspace{3pt}\\
    \ge&\hspace{-8pt}-\|f_{x_1}\|_\infty\frac{c(z_{i+1})-c(z_i)}{y_{i+1}-y_i}\Big(\frac{F(y_{i+1})-F(y_i)}{y_{i+1}-y_i}-\frac{F(y_{i})-F(y_{i-1})}{y_{i}-y_{i-1}}\Big)\frac{a_2^2}{2}\hspace{-3pt}+\hspace{-3pt}\alpha a_2\Big(\frac{c(z_{i+1})-c(z_i)}{y_{i+1}-y_i}-\frac{c(z_{i})-c(z_{i-1})}{y_i-y_{i-1}}\Big)
\end{array}\]

Using these bounds on (\ref{derivative}), we get
\[\hspace{-0.3cm}\begin{array}{ll}
&\frac{d}{d\epsilon}\Big|_{\epsilon=0} \mathcal P(v^\epsilon) \ge\vspace{3pt}\\
&\hspace{-0.3cm} -a_2^2\Big(\frac{F(y_{i+1})-F(y_i)}{y_{i+1}-y_i}-\frac{F(y_i)-F(y_{i-1})}{y_i-y_{i-1}}\Big)\Big(\frac{3}{2}\|f\|_\infty+\frac{\|f_{x_1}\|_\infty}{2}\Big(1+\frac{F(y_{i+1})-F(y_i)}{y_{i+1}-y_i}+\frac{c(z_{i+1})-c(z_i)}{y_{i+1}-y_i}\Big)\Big)\vspace{3pt}\\
&\hspace{-0.3cm}+\alpha a_2\Big(\frac{c(z_{i+1})-c(z_i)}{y_{i+1}-y_i}-\frac{c(z_{i})-c(z_{i-1})}{y_i-y_{i-1}}\Big)>0 \vspace{3pt}\\
\end{array}\]
by the fact that $a_2^2\le a_2$ and condition \ref{condition3}. This means decreasing the price of the $i$th good leads to a strictly larger profit.  Therefore, the original pricing schedule cannot be optimal.\end{proof}
\begin{lem}\label{case2} Suppose that $u \in \mathcal{U}$ and $X_i =(Du)^{-1}(z_i)$ shares a boundary with only two other regions, $X_{i-1}$ and $X_{i+1}$.  Suppose that the two indifference curves intersect within $\overline{X}$, and that both intersect $\{1\} \times [0,1]$.  Then, under hypothesis  \ref{condition4}, $u$ is not a solution to the monopolist's problem.   
\end{lem}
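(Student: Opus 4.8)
The plan is to mirror the proof of Lemma~\ref{case1}, interchanging the roles of the two coordinate directions $x_1$ and $x_2$ (equivalently, of $y$ and $F(y)$ in the preference $b(x,z)=x_1y+x_2F(y)$). As there, I would take $v_j=\max_{x\in X}x\cdot z_j-u(x)$ to be the pricing schedule associated with $u$ and perturb it by $v_i\to v_i-\epsilon$, leaving the other prices fixed, and let $X_i^\epsilon$ be the region of consumers who then choose $z_i$. The calculus-of-moving-boundaries computation that produced \eqref{derivative} is insensitive to where the indifference segments meet $\partial X$, so it applies verbatim: $\frac{d}{d\epsilon}\big|_{\epsilon=0}\mathcal P(v^\epsilon)$ equals $-\int_{X_i}f(x)\,dx$ plus the two boundary integrals over $L_i^0\subseteq\overline{X_i}\cap\overline{X_{i+1}}$ and $L_{i-1}^0\subseteq\overline{X_i}\cap\overline{X_{i-1}}$, the indifference segments bounding $X_i$, which meet at their common endpoint $a=(a_1,a_2)$. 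The aim is to show this derivative is strictly positive under hypothesis \ref{condition4}, so that $u$ cannot be optimal.

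The one structural change is the parametrization. Since by hypothesis both $L_i^0$ and $L_{i-1}^0$ meet $\{1\}\times[0,1]$ and issue downward from $a$ with slopes $-\frac{y_{i+1}-y_i}{F(y_{i+1})-F(y_i)}$ and $-\frac{y_i-y_{i-1}}{F(y_i)-F(y_{i-1})}$ respectively (the former flatter, by convexity of $F$), I would write them as graphs $x_2=x_2^i(x_1)$ and $x_2=x_2^{i-1}(x_1)$ over $x_1\in[a_1,1]$, with $x_2^i(a_1)=x_2^{i-1}(a_1)=a_2$. Along $L_i^0$ the quantity $x\cdot(z_{i+1}-z_i)$ is constant, equal to $a\cdot(z_{i+1}-z_i)$, and a short arclength computation gives $\frac{1}{|z_i-z_{i+1}|}\,d\mathcal H^{m-1}=\frac{1}{F(y_{i+1})-F(y_i)}\,dx_1$ along it, and similarly for $L_{i-1}^0$. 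Substituting, the derivative becomes the exact analogue of \eqref{eqn: derivative computation} with $x_1$ and $x_2$ exchanged: a combination of $-\int_{X_i}f$, the coordinates $a_1$ and $a_2$, the two slopes above, the cost ratios $\frac{c(z_{i+1})-c(z_i)}{F(y_{i+1})-F(y_i)}$ and $\frac{c(z_i)-c(z_{i-1})}{F(y_i)-F(y_{i-1})}$, and the one-dimensional integrals $\int_{a_1}^1 f(x_1,x_2^i(x_1))\,dx_1$ and $\int_{a_1}^1 f(x_1,x_2^{i-1}(x_1))\,dx_1$.

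The remaining work is the estimate, again parallel to Lemma~\ref{case1}. On $[a_1,1]$ the set $X_i$ lies between the two graphs, with vertical thickness $x_2^i(x_1)-x_2^{i-1}(x_1)=\big(\frac{y_i-y_{i-1}}{F(y_i)-F(y_{i-1})}-\frac{y_{i+1}-y_i}{F(y_{i+1})-F(y_i)}\big)(x_1-a_1)\ge0$; this bounds $\int_{X_i}f$, and, through $Df\in L^\infty$ with the factor $\|f_{x_2}\|_\infty$ (this is where $f_{x_2}$, rather than $f_{x_1}$, enters), it bounds $\int_{a_1}^1 f(x_1,x_2^i)\,dx_1-\int_{a_1}^1 f(x_1,x_2^{i-1})\,dx_1$ by a multiple of $(1-a_1)^2$. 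Every resulting error term is $O\!\big((1-a_1)^2\big)$, whereas \ref{condition2} gives $\frac{c(z_{i+1})-c(z_i)}{F(y_{i+1})-F(y_i)}>\frac{c(z_i)-c(z_{i-1})}{F(y_i)-F(y_{i-1})}$, which together with $\int_{a_1}^1 f(x_1,x_2^{i-1})\,dx_1\ge\alpha(1-a_1)$ produces a strictly positive term of order $(1-a_1)$. Bounding each error term from below, using $a_1,a_2\le1$ and $(1-a_1)^2\le(1-a_1)$, the derivative is at least $(1-a_1)$ times a constant that is positive precisely when \ref{condition4} holds; since the derivative is then positive, $u$ cannot be a solution.

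I expect the main obstacle to be the constant-chasing in this last step: one must track each of the individual lower bounds carefully so that they assemble into exactly the inequality \ref{condition4} --- in particular, the factor $1+\big(\frac{c(z_{i+1})-c(z_i)}{F(y_{i+1})-F(y_i)}-\frac{c(z_i)-c(z_{i-1})}{F(y_i)-F(y_{i-1})}\big)\big/\big(\frac{y_i-y_{i-1}}{F(y_i)-F(y_{i-1})}-\frac{y_{i+1}-y_i}{F(y_{i+1})-F(y_i)}\big)$ on the right-hand side of \ref{condition4} records the balance between the cost gap in the numerator and the slope gap $x_2^i-x_2^{i-1}$ in the denominator. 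A secondary point is to check that the parametrization is legitimate: Lemma~\ref{triangle} places $a$ in the interior of $X$, and the hypothesis of the present lemma ensures that both segments stay in the square until they reach $\{1\}\times[0,1]$, so the integrals over $x_1\in[a_1,1]$ are well defined.
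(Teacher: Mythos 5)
Your setup matches the paper's: same price perturbation $v_i\to v_i-\epsilon$, same moving-boundary derivative \eqref{optcond2}, same parametrization of the two indifference segments as graphs over $x_1\in[a_1,1]$ with $\tfrac{1}{|z_i-z_{i+1}|}d\mathcal H^{m-1}=\tfrac{1}{F(y_{i+1})-F(y_i)}dx_1$, and the same term-by-term estimates producing errors of order $(1-a_1)^2$ controlled by $\|f\|_\infty,\|f_{x_2}\|_\infty$ against a positive term of order $(1-a_1)$ coming from $\alpha$ times the slope gap plus the cost gap (the latter positive by \ref{condition2}). Up to that point the proposal is essentially the paper's argument.

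The gap is in the final assembly. You propose to pass from the quadratic error terms to linear ones via $(1-a_1)^2\le(1-a_1)$. That reduction yields a sufficient condition of the form
\[
\tfrac{1}{2}\Big(\tfrac{y_i-y_{i-1}}{F(y_i)-F(y_{i-1})}-\tfrac{y_{i+1}-y_i}{F(y_{i+1})-F(y_i)}\Big)\big(3\|f\|_\infty+\|f_{x_2}\|_\infty(\cdots)\big)<\alpha\Big(\text{slope gap}+\text{cost gap}\Big),
\]
i.e.\ condition \ref{condition4} with the left-hand side $\tfrac{F(y_{i+1})-F(y_i)}{y_{i+1}-y_i}$ replaced by $1$. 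Since \ref{condition4} only asserts that the right-hand side exceeds $\tfrac{F(y_{i+1})-F(y_i)}{y_{i+1}-y_i}$ (which is typically much smaller than $1$; in Example \ref{ex1} it is of order $A\le 1/3$), your bound is strictly stronger than the stated hypothesis and the lemma does not follow from it. The missing ingredient is the geometric inequality \eqref{tan ineq}: because $L_i^0$ exits through $\{1\}\times[0,1]$ rather than the bottom edge, comparing it with the parallel line through $(1,0)$ gives
\[
1-a_1\;\le\;\frac{F(y_{i+1})-F(y_{i})}{y_{i+1}-y_{i}},
\]
and it is \emph{this} bound, applied to one factor of $(1-a_1)$ in each quadratic error term, that produces exactly the factor $\tfrac{F(y_{i+1})-F(y_i)}{y_{i+1}-y_i}$ on the left of \ref{condition4}. (This is also precisely where the hypothesis that both curves reach the right edge is used quantitatively, not merely to justify the parametrization.) With \eqref{tan ineq} inserted in place of $(1-a_1)^2\le(1-a_1)$, and the elementary bound $3\|f\|_\infty\le\big(2+\tfrac{F(y_{i+1})-F(y_i)}{y_{i+1}-y_i}/\tfrac{F(y_i)-F(y_{i-1})}{y_i-y_{i-1}}\big)\|f\|_\infty$, your argument closes and coincides with the paper's.
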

\begin{proof}
    We perturb similarly to the proof of Lemma \ref{case1} to get 
    \begin{eqnarray}\label{optcond2}
         \frac{d}{d\epsilon}\Big|_{\epsilon=0} \mathcal P(v^\epsilon)  =
         &-&\int_{X_i}f(x)dx\label{derivative2} \\
      &+&\int_{L^0_{i}}[x \cdot (z_{i}-z_{i+1})-c(z_{i}) +c(z_{i+1}) ]f(x)\frac{1}{|z_i-z_{i+1}|}d\mathcal H^{m-1}(x) \nonumber\\
&+&     \int_{L^0_{i-1}}[x \cdot (z_{i}-z_{i-1})-c(z_i) + c(z_{i-1}) ]f(x)\frac{1}{|z_i-z_{i-1}|}d\mathcal H^{m-1}(x). \nonumber
\end{eqnarray}
    Since both indifference curves reach axis $\{1\} \times [0,1]$ by assumption, we can parametrize them  $(x_1,x^{i-1}_2(x_1))$ and $(x_1,x^i_2(x_1))$ by $x_ 1 \in [1-a_1,1]$ and since the line segment $X^N_=(y_i, v_{i+1} - v_{i})$ is orthogonal to $z_{i+1}-z_i =( y_{i+1},F(y_{i+1})) -(y_{i},F(y_{i})) $, the slope of $x_2^i$ is $-\frac{y_{i+1} -y_i}{F(y_{i+1})-F(y_i)}$, and  $d\mathcal H^{m-1}(x) = \sqrt{\big(\frac{y_{i+1} -y_i}{F(y_{i+1})-F(y_i)}\big)^2+1}dx_2$, so that  
    \begin{eqnarray*}
&&\int_{L_i^0}[x \cdot (z_{i}-z_{i+1})-c(z_{i}) +c(z_{i+1}) ]f(x)\frac{1}{|z_i-z_{i+1}|}d\mathcal H^{m-1}(x)\\
&=&[a \cdot (z_{i}-z_{i+1})-c(z_{i}) +c(z_{i+1}) ]\frac{1}{F(y_{i+1})-F(y_{i})}\int_{1-a_1}^{1} f(x_1,x^i_2(x_1))dx_2\\
&=& [-a_1\frac{y_{i+1} -y_i}{F(y_{i+1})-F(y_i)} -a_2  +\frac{c(z_{i+1}) -c(z_{i})}{F(y_{i+1})-F(y_{i})}]\int_{1-a_1}^{1} f(x_1,x^i_2(x_1))dx_2
\end{eqnarray*}

Similarly we get 
\begin{eqnarray*}
\int_{L^0_{i-1}}[x \cdot (z_{i}-z_{i-1})-c(z_i) + c(z_{i-1}) ]f(x)\frac{1}{|z_i-z_{i-1}|}d\mathcal H^{m-1}(x)\\
=[a_1\frac{y_{i} -y_{i-1}}{F(y_{i})-F(y_{i-1})} +a_2  -\frac{c(z_{i}) -c(z_{i-1})}{F(y_{i})-F(y_{i-1})}]\int_{1-a_1}^{1} f(x_1,x^{i-1}_2(x_1))dx_2.
\end{eqnarray*}
We rewrite equation (\ref{derivative2}) as follows
\begin{equation}\label{derivative3}
\begin{array}{ll}
         &\frac{d}{d\epsilon}\Big|_{\epsilon=0} \mathcal P(v^\epsilon)  =\vspace{3pt}\\
         &-\int_{X_i}f(x)dx \vspace{3pt}\\
      &+ (1-a_1)\Big(\frac{y_{i+1} -y_i}{F(y_{i+1})-F(y_i)}
\int_{1-a_1}^{1} f(x_1,x^i_2(x_1))dx_2- \frac{y_{i} -y_{i-1}}{F(y_{i})-F(y_{i-1})}
\int_{1-a_1}^{1} f(x_1,x^{i-1}_2(x_1))dx_2\Big) \vspace{3pt}\\
&-    a_2\Big(\int_{1-a_1}^{1} f(x_1,x^i_2(x_1))dx_2-
\int_{1-a_1}^{1} f(x_1,x^{i-1}_2(x_1))dx_2\Big) \vspace{3pt}\\
&-  \Big(\frac{y_{i+1} -y_i}{F(y_{i+1})-F(y_i)}
\int_{1-a_1}^{1} f(x_1,x^i_2(x_1))dx_2-\frac{y_{i} -y_{i-1}}{F(y_{i})-F(y_{i-1})}
\int_{1-a_1}^{1} f(x_1,x^{i-1}_2(x_1))dx_2\Big)\vspace{3pt}\\
&+ \Big(\frac{c(z_{i+1}) -c(z_{i})}{F(y_{i+1})-F(y_{i})}\int_{1-a_1}^{1} f(x_1,x^i_2(x_1))dx_2
-\frac{c(z_{i}) -c(z_{i-1})}{F(y_{i})-F(y_{i-1})}\int_{1-a_1}^{1} f(x_1,x^{i-1}_2(x_1))dx_2\Big)
\end{array}\end{equation}

For the first term of (\ref{derivative3}), we have 
 \begin{equation}\label{mass bound2}
 -\int_{X_i} f(x)dx\ge - \|f\|_\infty \frac{(1-a_1)^2}{2}\Big(\frac{y_i-y_{i-1}}{F(y_i)-F(y_{i-1})}-\frac{y_{i+1}-y_i}{F(y_{i+1})-F(y_i)}\Big).
 \end{equation}

For the second term, we have 
\[\begin{array}{ll}

&\hspace{-3pt}-(1-a_1)\Big(\frac{y_{i}-y_{i-1}}{F(y_{i})-F(y_{i-1})}\int_{a_1}^{1}f(x_1,x_2^{i-1}(x_1))dx_1-\frac{y_{i+1}-y_i}{F(y_{i+1})-F(y_{i})}\int_{a_1}^{1}f(x_1,x_2^{i}(x_1))dx_1\Big)\vspace{3pt}\\
=&\hspace{-3pt}-(1-a_1)\Big(\frac{y_{i}-y_{i-1}}{F(y_{i})-F(y_{i-1})}\int_{a_1}^{1}(f(x_1,x_2^{i-1}(x_1))-f(x_1,x_2^{i}(x_1)))dx_1\vspace{3pt}\\
&\hspace{-3pt}+\Big(\frac{y_{i}-y_{i-1}}{F(y_{i})-F(y_{i-1})}-\frac{y_{i+1}-y_i}{F(y_{i+1})-F(y_{i})}\Big)\int_{a_1}^{1}f(x_1,x_2^{i}(x_1))dx_1\Big)\vspace{3pt}\\

\ge&\hspace{-3pt}-\frac{y_{i}-y_{i-1}}{F(y_{i})-F(y_{i-1})}\frac{(1-a_1)^2}{2}\|f_{x_2}\|_\infty\Big(\frac{y_i-y_{i-1}}{F(y_i)-F(y_{i-1})}-\frac{y_{i+1}-y_i}{F(y_{i+1})-F(y_i)}\Big)\vspace{3pt}\\
&\hspace{-3pt}-\Big(\frac{y_{i}-y_{i-1}}{F(y_{i})-F(y_{i-1})}-\frac{y_{i+1}-y_i}{F(y_{i+1})-F(y_{i})}\Big)\|f\|_\infty(1-a_1)^2
\end{array}\]
and we use the fact that $1-a_1\le1.$

As $a_2\le1,$ we have 
\[\begin{array}{ll}
&-a_2\Big(\int_{a_1}^{1}f(x_1,x_2^{i}(x_1))dx_1-\int_{a_1}^{1}f(x_1,x_2^{i-1}(x_1))dx_1\Big)\ge \vspace{3pt}\\
&-\frac{(1-a_1)^2}{2}\|f_{x_2}\|_\infty\Big(\frac{y_i-y_{i-1}}{F(y_i)-F(y_{i-1})}-\frac{y_{i+1}-y_i}{F(y_{i+1})-F(y_i)}\Big).
\end{array}\]
 Next, we have 
 \[\begin{array}{ll}
     & -\frac{y_{i+1}-y_i}{F(y_{i+1})-F(y_{i})}\int_{a_1}^{1}f(x_1,x_2^{i}(x_1))dx_1+\frac{y_{i}-y_{i-1}}{F(y_{i})-F(y_{i-1})}\int_{a_1}^{1}f(x_1,x_2^{i-1}(x_1))dx_1\vspace{3pt} \\
    
     =&-\Big(\frac{y_{i+1}-y_i}{F(y_{i+1})-F(y_{i})}-\frac{y_{i}-y_{i-1}}{F(y_{i})-F(y_{i-1})}\Big)\int_{a_1}^{1}f(x_1,x_2^{i}(x_1))dx_1\vspace{3pt}\\
     &+\frac{y_{i}-y_{i-1}}{F(y_{i})-F(y_{i-1})}\int_{a_1}^{1}(f(x_1,x_2^{i-1}(x_1))-f(x_1,x_2^{i}(x_1)))dx_1\vspace{3pt} \\
     
     \ge&\hspace{-0.25cm}\alpha(1-a_1)\Big(\frac{y_{i}-y_{i-1}}{F(y_{i})-F(y_{i-1})}-\frac{y_{i+1}-y_i}{F(y_{i+1})-F(y_{i})}\Big)\vspace{3pt}\\
     &-\frac{y_{i}-y_{i-1}}{F(y_{i})-F(y_{i-1})}\frac{(1-a_1)^2}{2}\|f_{x_2}\|_\infty\Big(\frac{y_i-y_{i-1}}{F(y_i)-F(y_{i-1})}-\frac{y_{i+1}-y_i}{F(y_{i+1})-F(y_i)}\Big).
\end{array}\]

From the last term, we get 
    \[\begin{array}{ll}
    &\frac{c(z_{i+1})-c(z_i)}{F(y_{i+1})-F(y_i)}\int_{a_1}^{1}f(x_1,x_2^{i}(x_1))dx_1-\frac{c(z_{i})-c(z_{i-1})}{F(y_i)-F(y_{i-1})}\int_{a_1}^{1}f(x_1,x_2^{i-1}(x_1))dx_1\vspace{3pt}\\
   = &\Big(\frac{c(z_{i+1})-c(z_i)}{F(y_{i+1})-F(y_i)}\int_{a_1}^{1}(f(x_1,x_2^{i}(x_1))-f(x_1,x_2^{i-1}(x_1)))dx_1\vspace{3pt}\\
   &+\Big(\frac{c(z_{i+1})-c(z_i)}{F(y_{i+1})-F(y_i)}-\frac{c(z_{i})-c(z_{i-1})}{F(y_i)-F(y_{i-1})}\Big)\int_{a_1}^{1}f(x_1,x_2^{i-1}(x_1))dx_1\Big)\vspace{3pt}\\
   \ge&-\frac{c(z_{i+1})-c(z_i)}{F(y_{i+1})-F(y_i)}\frac{(1-a_1)^2}{2}\|f_{x_2}\|_\infty\Big(\frac{y_i-y_{i-1}}{F(y_i)-F(y_{i-1})}-\frac{y_{i+1}-y_i}{F(y_{i+1})-F(y_i)}\Big)\vspace{3pt}\\
   &+\alpha(1-a_1)\Big(\frac{c(z_{i+1})-c(z_i)}{F(y_{i+1})-F(y_i)}-\frac{c(z_{i})-c(z_{i-1})}{F(y_i)-F(y_{i-1})}\Big).
    \end{array}\]

Hence,

\[\hspace{-0.25cm}\begin{array}{ll}
\frac{d}{d\epsilon}\Big|_{\epsilon=0} \mathcal P(v^\epsilon)\ge&-\frac{(1-a_1)^2}{2}\Big(\frac{y_i-y_{i-1}}{F(y_i)-F(y_{i-1})}-\frac{y_{i+1}-y_i}{F(y_{i+1})-F(y_i)}\Big)\times\vspace{3pt}\\
&\Big(3\|f\|_\infty+\|f_{x_2}\|_\infty\Big(1+2\frac{y_{i}-y_{i-1}}{F(y_{i})-F(y_{i-1})}+\frac{c(z_{i+1})-c(z_i)}{F(y_{i+1})-F(y_i)}\Big)\Big)\vspace{3pt}\\
&+(1-a_1)\alpha\Big(\frac{y_{i}-y_{i-1}}{F(y_{i})-F(y_{i-1})}-\frac{y_{i+1}-y_i}{F(y_{i+1})-F(y_{i})}+\frac{c(z_{i+1})-c(z_i)}{F(y_{i+1})-F(y_i)}-\frac{c(z_{i})-c(z_{i-1})}{F(y_i)-F(y_{i-1})}\Big).
\end{array}\]
Now, consider the the triangle formed by $(1,0),\,(1,1)$ and $D=(\beta,1)$ where $D$ is the intersection between $[0,1]\times\{1\}$ and the line  of equation $x_2=-\frac{y_{i+1}-y_{i}}{F(y_{i+1})-F(y_{i})}(x_1-1)$ which is parallel to $L_{i}^0$ and passes through $(1,0).$ The lower angle of the triangle is equal to the angle $\theta_{i}$ between $z_{i+1}-z_{i}$ and the $x_1-$axis, which means that $\tan(\theta_{i})=\frac{F(y_{i+1})-F(y_{i})}{y_{i+1}-y_{i}}=\frac{1-\beta}{1}$ and then 
\begin{equation}\label{tan ineq}
\frac{F(y_{i+1})-F(y_{i})}{y_{i+1}-y_{i}}=1-\beta\ge 1-a_1.
\end{equation}

Hence,

\[\begin{array}{ll}\hspace{-0.2cm}
\frac{d}{d\epsilon}\Big|_{\epsilon=0} \mathcal P(v^\epsilon)\ge&-\frac{1}{2}\Big(\frac{F(y_{i+1})-F(y_{i})}{y_{i+1}-y_{i}}\Big)(1-a_1)\Big(\frac{y_i-y_{i-1}}{F(y_i)-F(y_{i-1})}-\frac{y_{i+1}-y_i}{F(y_{i+1})-F(y_i)}\Big)\times \vspace{3pt}\\
&\Big(3\|f\|_\infty+\|f_{x_2}\|_\infty\Big(1+2\frac{y_{i}-y_{i-1}}{F(y_{i})-F(y_{i-1})}+\frac{c(z_{i+1})-c(z_i)}{F(y_{i+1})-F(y_i)}\Big)\Big)\vspace{3pt}\\
&+(1-a_1)\alpha\Big(\frac{y_{i}-y_{i-1}}{F(y_{i})-F(y_{i-1})}-\frac{y_{i+1}-y_i}{F(y_{i+1})-F(y_{i})}+\frac{c(z_{i+1})-c(z_i)}{F(y_{i+1})-F(y_i)}-\frac{c(z_{i})-c(z_{i-1})}{F(y_i)-F(y_{i-1})}\Big)
\end{array}\]
 which is positive by condition \ref{condition4} and the fact that $3\|f\|_\infty\hspace{-3pt}\le\hspace{-0.1cm} \Big( 2+\frac{\frac{F(y_{i+1})-F(y_{i})}{y_{i+1}-y_{i}}}{\frac{F(y_i)-F(y_{i-1})}{y_i-y_{i-1}}}\Big)\hspace{-0.1cm}\|f\|_\infty$. Therefore, $u$ is not a solution.
\end{proof}

\begin{lem}\label{case3}
     Suppose that $u \in \mathcal{U}$ and $X_i =(Du)^{-1}(z_i)$ shares a boundary with only two other regions, $X_{i-1}$ and $X_{i+1}$.  Suppose that the two indifference curves intersect within $\overline{X}$, and one of them intersects $[0,1]\times\{0\}$ and the other intersects $\{1\} \times [0,1]$.  Then, under hypotheses \ref{condition3} and \ref{condition4}, $u$ is not a solution to the monopolist's problem.   
\end{lem}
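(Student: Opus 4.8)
The plan is to run the same price-perturbation argument used for Lemmas~\ref{case1} and~\ref{case2}. Write $a=(a_1,a_2)$ for the point where the two indifference segments meet, $L^0_{i-1}$ for the segment separating $X_{i-1}$ from $X_i$, and $L^0_i$ for the one separating $X_i$ from $X_{i+1}$. Lowering the price of $z_i$ to $v_i-\epsilon$, letting $X_i^\epsilon\supseteq X_i$ be the enlarged set of buyers of $z_i$, and differentiating at $\epsilon=0$ via the moving-boundary formula yields exactly the expression \eqref{optcond1}: a term $-\int_{X_i}f\,dx$ together with the two boundary integrals of $[\,x\cdot(z_i-z_{i\pm1})-c(z_i)+c(z_{i\pm1})\,]f$ over $L^0_{i-1}$ and $L^0_i$.

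The first genuinely new step is to pin down which segment exits through which edge. Since $F$ is convex, $L^0_i$ (orthogonal to $z_{i+1}-z_i$) is flatter than $L^0_{i-1}$ (orthogonal to $z_i-z_{i-1}$) — it has the larger $|dx_1/dx_2|$ — and, because $x\in X_i$ means $x\cdot(z_i-z_{i-1})\ge v_i-v_{i-1}$ and $x\cdot(z_{i+1}-z_i)\le v_{i+1}-v_i$, the region $X_i$ lies between the two segments with $L^0_{i-1}$ on the left. If $L^0_i$ met the bottom edge $[0,1]\times\{0\}$, then $L^0_{i-1}$, lying to its left and moving rightward even more slowly as $x_2$ decreases, would meet the bottom edge at a smaller abscissa, hence also inside $[0,1]$, contradicting the hypothesis that the two segments exit through different edges. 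Thus in the present case $L^0_{i-1}$ necessarily exits through $[0,1]\times\{0\}$ and $L^0_i$ through $\{1\}\times[0,1]$.

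Parametrizing $L^0_{i-1}$ by $x_2\in[0,a_2]$ as in Lemma~\ref{case1} and $L^0_i$ by $x_1\in[a_1,1]$ as in Lemma~\ref{case2}, the two boundary integrals evaluate to the $(i-1)$-side ``bottom-type'' term of Lemma~\ref{case1} plus the $i$-side ``right-type'' term of Lemma~\ref{case2}, so that
\[
\frac{d}{d\epsilon}\Big|_{0}\mathcal P(v^\epsilon)=-\int_{X_i}f\,dx+\Big[a_1+a_2 m_{i-1}-\tfrac{c(z_i)-c(z_{i-1})}{y_i-y_{i-1}}\Big]\!\int_0^{a_2}\! f(x_1^{i-1},x_2)\,dx_2+\Big[\tfrac{c(z_{i+1})-c(z_i)}{F(y_{i+1})-F(y_i)}-a_2-\tfrac{a_1}{m_i}\Big]\!\int_{a_1}^1\! f(x_1,x_2^{i})\,dx_1,
\]
with $m_{i-1}=\tfrac{F(y_i)-F(y_{i-1})}{y_i-y_{i-1}}$ and $m_i=\tfrac{F(y_{i+1})-F(y_i)}{y_{i+1}-y_i}$. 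I would then bound the pieces as in the two pure cases: the mass $\int_{X_i}f$ of the quadrilateral $X_i$ is estimated by splitting it along the horizontal line $x_2=d_2$, where $d_2$ is the ordinate at which $L^0_i$ leaves $\{1\}\times[0,1]$, into an upper triangle with apex $a$ (a Lemma~\ref{case1}-shaped piece) and a lower trapezoid against the right edge (a Lemma~\ref{case2}-shaped piece). Using the identities $a_2 m_{i-1}=b_1-a_1\le 1-a_1$ (because $L^0_{i-1}$ meets the bottom edge at $x_1=b_1\le1$) and $(1-a_1)/m_i=a_2-d_2\le a_2$ (because $L^0_i$ meets the right edge at $x_2=d_2\ge0$), together with $m_i\ge 1-a_1$ from \eqref{tan ineq}, the parameters $a_2$ and $1-a_1$ are interlocked, so every negative contribution — the mass term and the $f$-oscillation terms controlled by $\|f\|_\infty,\|f_{x_1}\|_\infty,\|f_{x_2}\|_\infty$ exactly as in \eqref{X_i mass bound}--\eqref{eqn: derivative computation} and \eqref{mass bound2}--\eqref{derivative3} — is of quadratic order, while the surviving positive contributions are the linear-order terms $\alpha a_2\big(\tfrac{c(z_{i+1})-c(z_i)}{y_{i+1}-y_i}-\tfrac{c(z_i)-c(z_{i-1})}{y_i-y_{i-1}}\big)$ from the $(i-1)$-side and $\alpha(1-a_1)\big(\tfrac{y_{i}-y_{i-1}}{F(y_i)-F(y_{i-1})}-\tfrac{y_{i+1}-y_i}{F(y_{i+1})-F(y_i)}+\tfrac{c(z_{i+1})-c(z_i)}{F(y_{i+1})-F(y_i)}-\tfrac{c(z_i)-c(z_{i-1})}{F(y_i)-F(y_{i-1})}\big)$ from the $i$-side. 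Conditions~\ref{condition3} and~\ref{condition4} say precisely that these dominate, whence $\tfrac{d}{d\epsilon}|_{0}\mathcal P(v^\epsilon)>0$ and $u$ is not optimal.

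The main obstacle is the asymmetric bookkeeping: because the two indifference segments leave $X$ through different edges, $X_i$ is a quadrilateral rather than the triangle of the pure cases, and the two boundary integrals are naturally scaled by two different small parameters, $a_2$ and $1-a_1$. One must interlock them via $a_2 m_{i-1}\le 1-a_1$ and $(1-a_1)/m_i\le a_2$, estimate the trapezoidal part of the mass carefully, and invoke \emph{both} \ref{condition3} and \ref{condition4} at once; checking that the resulting ``mixed'' leading term cannot change sign is the delicate point.
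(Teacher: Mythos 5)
Your setup is correct and matches the paper's: the same price perturbation, the same first-order expression \eqref{optcond1}, and the correct identification that $L^0_{i-1}$ (the steeper segment) must be the one exiting through $[0,1]\times\{0\}$ while $L^0_i$ exits through $\{1\}\times[0,1]$. The interlocking inequalities $a_2\,m_{i-1}\le 1-a_1\le a_2\,m_i$ are also right. But the core of the argument is missing, and the step you flag as ``the delicate point'' is in fact the entire content of the lemma. The positive linear terms you invoke do not come ``from the $(i-1)$-side'' and ``from the $i$-side'' separately: in Lemma \ref{case1} the term $\alpha a_2\big(\tfrac{c(z_{i+1})-c(z_i)}{y_{i+1}-y_i}-\tfrac{c(z_i)-c(z_{i-1})}{y_i-y_{i-1}}\big)$ is produced by subtracting the \emph{two} boundary integrals over a \emph{common} parameter range $[0,a_2]$, and likewise the positive term in Lemma \ref{case2} pairs both integrals over $[a_1,1]$. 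In the mixed case the two integrals live on incomparable ranges, so you cannot extract both linear terms from the single pair of boundary integrals without double counting, and the individual coefficients $a\cdot(z_i-z_{i-1})-(c(z_i)-c(z_{i-1}))$ and $a\cdot(z_{i+1}-z_i)-(c(z_{i+1})-c(z_i))$ have uncontrolled signs, so neither integral can simply be discarded as ``positive''.

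The paper resolves this by a case analysis on precisely those two signs: whichever integrand has the favorable sign permits \emph{extending} the corresponding indifference segment (together with an extension of $f$ keeping $\alpha\le \overline f\le\|f\|_\infty$) beyond $\partial X$, so that both integrals are again over comparable ranges and the configuration reduces to Lemma \ref{case1} or Lemma \ref{case2}; extending can only decrease the lower bound because the extended integrand is nonpositive there. In the remaining sub-case (both signs unfavorable for a direct reduction), one must also bound the mass of the quadrilateral $X_i$ sharply: the paper splits it along the vertical line $x_1=d$ into a triangle $S$ and a trapezoid $T$ and shows $T/S\le\big(\tfrac{F(y_{i+1})-F(y_i)}{y_{i+1}-y_i}-\tfrac{F(y_i)-F(y_{i-1})}{y_i-y_{i-1}}\big)\big/\tfrac{F(y_i)-F(y_{i-1})}{y_i-y_{i-1}}$; this extra factor is exactly what the ratio $\tfrac{F(y_{i+1})-F(y_i)}{y_{i+1}-y_i}\big/\tfrac{F(y_i)-F(y_{i-1})}{y_i-y_{i-1}}$ appearing in \ref{condition4} is there to absorb. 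Your horizontal splitting of $X_i$ and the interlocking inequalities alone do not substitute for these steps, so as written the proposal does not establish $\tfrac{d}{d\epsilon}\big|_{0}\mathcal P(v^\epsilon)>0$.
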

\begin{proof}
 We perturb similarly to the proof of Lemma \ref{case1} to get 
    \[\begin{array}{ll}
         &\frac{d}{d\epsilon}\Big|_{\epsilon=0} \mathcal P(v^\epsilon)  =-\int_{X_i}f(x)dx \vspace{3pt}\\
      &+\int_{L^0_{i}}[-a_1(y_{i+1}-y_i)-a_2(F(y_{i+1})-F(y_i)) +c(z_{i+1})-c(z_{i}) ]f(x)\frac{1}{|z_i-z_{i+1}|}d\mathcal H^{m-1}(x) \vspace{3pt}\\
&+    \int_{L^0_{i-1}}[a_1(y_i-y_{i-1})+a_2(F(y_{i})-F(y_{i-1}))-c(z_i) + c(z_{i-1}) ]f(x)\frac{1}{|z_i-z_{i-1}|}d\mathcal H^{m-1}(x). 
\end{array}\]
\begin{enumerate}
    \item If $ a_1(y_{i}-y_{i-1})+a_2(F(y_{i})-F(y_{i-1})) -(c(z_{i})-c(z_{i-1}))\le0,$ then
    \[ \begin{array}{ll}
        & \frac{d}{d\epsilon}\Big|_{\epsilon=0} \mathcal P(v^\epsilon)  \ge\vspace{3pt}\\
         &-\int_{X_i}f(x)dx \vspace{3pt}\\
      &+\int_{L^0_{i}}[-a_1(y_{i+1}-y_i)-a_2(F(y_{i+1})-F(y_i)) +c(z_{i+1})-c(z_{i}) ]f(x)\frac{1}{|z_i-z_{i+1}|}d\mathcal H^{m-1}(x) \vspace{3pt}\\
&+     \int_{L'_{i-1}}[a_1(y_i-y_{i-1})+a_2(F(y_{i})-F(y_{i-1}))-c(z_i) + c(z_{i-1}) ]f(x)\frac{1}{|z_i-z_{i-1}|}d\mathcal H^{m-1}(x). 
\end{array}\]

where $L_{i-1}'$ is the segment connecting $a$ and the intersection between the line passing through $L_{i-1}^0$ and  $\{1\}\times[-\eta,\eta],$ and we extend $f$  to $\overline{f}\ge\alpha$ on $[0,1]\times[-\eta,\eta]$ where $\|\overline{f}\|_\infty=\|f\|_\infty$ and $\|\overline{f}_{x_2}\|_\infty=\|f_{x_2}\|_\infty,$ for large enough $\eta>0.$ And using the following inequality
     \[\mu( X_i)\le \|f\|_\infty\frac{(1-a_1)^2}{2}\Big(\frac{y_i-y_{i-1}}{F(y_i)-F(y_{i-1})}-\frac{y_{i+1}-y_i}{F(y_{i+1})-F(y_i)}\Big),\]

     we get that 
    \[\vspace{-4pt}\begin{array}{ll}
         &\frac{d}{d\epsilon}\Big|_{\epsilon=0} \mathcal P(v^\epsilon)\ge\vspace{3pt}\\  
         &
         - \|f\|_\infty\frac{(1-a_1)^2}{2}\Big(\frac{y_i-y_{i-1}}{F(y_i)-F(y_{i-1})}-\frac{y_{i+1}-y_i}{F(y_{i+1})-F(y_i)}\Big)\vspace{3pt} \\
      &+\int_{L^0_{i}}[-a_1(y_{i+1}-y_i)-a_2(F(y_{i+1})-F(y_i)) +c(z_{i+1})-c(z_{i}) ]f(x)\frac{1}{|z_i-z_{i+1}|}d\mathcal H^{m-1}(x) \vspace{3pt}\\
&+     \int_{L'_{i-1}}[a_1(y_i-y_{i-1})+a_2(F(y_{i})-F(y_{i-1}))-c(z_i) + c(z_{i-1}) ]f(x)\frac{1}{|z_i-z_{i-1}|}d\mathcal H^{m-1}(x). 
\end{array}\]
     which is similar to equation \eqref{optcond2} after using \eqref{mass bound2} in the proof of Lemma \ref{case2} and can be solved using the same argument to get $\frac{d}{d\epsilon}\Big|_{\epsilon=0} \mathcal P(v^\epsilon)>0.$

\item If $ a_1(y_{i}-y_{i-1})+a_2(F(y_{i})-F(y_{i-1})) -(c(z_{i})-c(z_{i-1}))\ge0,$ we have 2 cases.

\begin{enumerate}
\item If  $a_1(y_{i+1}-y_i)+a_2(F(y_{i+1})-F(y_i)) -(c(z_{i+1})-c(z_i))\ge0,$
 then 
\[\begin{array}{ll}
        & \frac{d}{d\epsilon}\Big|_{\epsilon=0} \mathcal P(v^\epsilon)   \ge\\&
         -\|f\|_\infty\Big(\frac{F(y_{i+1})-F(y_i)}{y_{i+1}-y_i}-\frac{F(y_{i})-F(y_{i-1})}{y_{i}-y_{i-1}}\Big)\frac{a_2^2}{2}\vspace{3pt}\\
      &+\int_{L'_{i}}[-a_1(y_{i+1}-y_i)-a_2(F(y_{i+1})-F(y_i)) +c(z_{i+1})-c(z_{i}) ]f(x)\frac{1}{|z_i-z_{i+1}|}d\mathcal H^{m-1}(x) \vspace{3pt}\\
&+    \int_{L^0_{i-1}}[a_1(y_i-y_{i-1})+a_2(F(y_{i})-F(y_{i-1}))-c(z_i) + c(z_{i-1}) ]f(x)\frac{1}{|z_i-z_{i-1}|}d\mathcal H^{m-1}(x). 
\end{array}\]
          where $L_{i}'$ is the segment connecting $a$ and the intersection between the line passing through $L^0_{i}$ and  $[0,2]\times\{0\},$ and we extend $f$ to $\overline{f}\ge\alpha$ on $[0,2]\times[-2,2]$ where $\|\overline{f}\|_\infty=\|f\|_\infty$ and $\|\overline{f}_{x_1}\|_\infty=\|f_{x_1}\|_\infty.$ 
          
          A similar argument to the one in the proof of Lemma \ref{case1} implies that $\frac{d}{d\epsilon}\Big|_{\epsilon=0} \mathcal P(v^\epsilon)>0.$

        \item  If $a_1(y_{i+1}-y_i)+a_2(F(y_{i+1})-F(y_i)) -(c(z_{i+1})-c(z_i))\le0:$

Let $(d,0)$ be the intersection of $L_{i-1}^0$ and the $x_1-$axis, and let $(1,e)$ be the intersection between $L_{i}^0$ and the line $x_1=1.$ Let $T$ be the area of the trapezoid formed by $(d,0),\,(1,e)\, (1,0)$ and $B$ where $B$ is the intersection between $L_{i}^0$ and the line $x_1=d.$ In addition, let $S$ be the area of the triangle formed by $a,\,B$ and $(d,0).$  We find an upper bound on the following ratio knowing that $d=a_1+\frac{F(y_i)-F(y_{i-1})}{y_i-y_{i-1}}a_2,$  $e=a_2-\frac{y_{i+1}-y_i}{F(y_{i+1})-F(y_i)}(1-a_1),$ and $B=(d,a_2-\frac{y_{i+1}-y_i}{F(y_{i+1})-F(y_i)}(d-a_1)),$
          \[\begin{array}{ll}
 
        \frac{T}{S}
        &=  \frac{(1-d)(e+a_2-\frac{y_{i+1}-y_i}{F(y_{i+1})-F(y_i)}(d-a_1))}{(d-a_1)(a_2-\frac{y_{i+1}-y_i}{F(y_{i+1})-F(y_i)}(d-a_1))}\vspace{3pt}\\
&=\frac{y_{i+1}-y_i}{F(y_{i+1})-F(y_i)}\frac{\Big(1-a_1-a_2\frac{F(y_i)-F(y_{i-1})}{y_i-y_{i-1}}\Big)\Big(a_2\frac{F(y_{i+1})-F(y_i)}{y_{i+1}-y_i}+a_1-1+a_2\frac{F(y_{i+1})-F(y_i)}{y_{i+1}-y_i}-a_2\frac{F(y_i)-F(y_{i-1})}{y_i-y_{i-1}}\Big)}{a_2^2\frac{F(y_i)-F(y_{i-1})}{y_i-y_{i-1}}\Big(1-\frac{\frac{F(y_i)-F(y_{i-1})}{y_i-y_{i-1}}}{\frac{F(y_{i+1})-F(y_i)}{y_{i+1}-y_i}}\Big)}\vspace{3pt}\\
&=\frac{\Big(1-a_1-a_2\frac{F(y_i)-F(y_{i-1})}{y_i-y_{i-1}}\Big)\Big(a_2\frac{F(y_{i+1})-F(y_{i})}{y_{i+1}-y_{i}}-a_2\frac{F(y_i)-F(y_{i-1})}{y_i-y_{i-1}}\Big)}{a_2^2\frac{F(y_i)-F(y_{i-1})}{y_i-y_{i-1}}\Big(\frac{F(y_{i+1})-F(y_{i})}{y_{i+1}-y_{i}}-\frac{F(y_i)-F(y_{i-1})}{y_i-y_{i-1}}\Big)}\vspace{3pt}\\
&\quad+\frac{\Big(1-a_1-a_2\frac{F(y_i)-F(y_{i-1})}{y_i-y_{i-1}}\Big)\Big(a_1-1+a_2\frac{F(y_{i+1})-F(y_{i})}{y_{i+1}-y_{i}}\Big)}{a_2^2\frac{F(y_i)-F(y_{i-1})}{y_i-y_{i-1}}\Big(\frac{F(y_{i+1})-F(y_{i})}{y_{i+1}-y_{i}}-\frac{F(y_i)-F(y_{i-1})}{y_i-y_{i-1}}\Big)}.\vspace{3pt}\\
          \end{array}\]

          As $(1-d)=1-a_1-a_2\frac{F(y_i)-F(y_{i-1})}{y_i-y_{i-1}}\le a_2\Big(\frac{F(y_{i+1})-F(y_{i})}{y_{i+1}-y_{i}}-\frac{F(y_i)-F(y_{i-1})}{y_i-y_{i-1}}\Big),$ we get
\[\begin{array}{ll}
\frac{T}{S}\le&\frac{\Big(1-a_1-a_2\frac{F(y_i)-F(y_{i-1})}{y_i-y_{i-1}}\Big)}{a_2\frac{F(y_i)-F(y_{i-1})}{y_i-y_{i-1}}}
+\frac{\Big(1-a_1-a_2\frac{F(y_i)-F(y_{i-1})}{y_i-y_{i-1}}\Big)\Big(a_1-1+a_2\frac{F(y_{i+1})-F(y_{i})}{y_{i+1}-y_{i}}\Big)}{a_2\frac{F(y_i)-F(y_{i-1})}{y_i-y_{i-1}}\Big(1-a_1-a_2\frac{F(y_i)-F(y_{i-1})}{y_i-y_{i-1}}\Big)},\vspace{3pt}\\
=&\frac{\frac{F(y_{i+1})-F(y_{i})}{y_{i+1}-y_{i}}-\frac{F(y_i)-F(y_{i-1})}{y_i-y_{i-1}}}{\frac{F(y_i)-F(y_{i-1})}{y_i-y_{i-1}}}.

          \end{array}\]
   Then 
   \[\hspace{-0.4cm}\begin{array}{ll}
   \mu( X_i)&\le\|f\|_\infty(T+S)\\
   &\le \|f\|_\infty\Big(\frac{\frac{F(y_{i+1})-F(y_{i})}{y_{i+1}-y_{i}}-\frac{F(y_i)-F(y_{i-1})}{y_i-y_{i-1}}}{\frac{F(y_i)-F(y_{i-1})}{y_i-y_{i-1}}}S+S\Big)\vspace{3pt}\\
   &=\|f\|_\infty\Big(\frac{\frac{F(y_{i+1})-F(y_{i})}{y_{i+1}-y_{i}}-\frac{F(y_i)-F(y_{i-1})}{y_i-y_{i-1}}}{\frac{F(y_i)-F(y_{i-1})}{y_i-y_{i-1}}}+1\Big)\frac{(d-a_1)^2}{2}\Big(\frac{y_i-y_{i-1}}{F(y_i)-F(y_{i-1})}-\frac{y_{i+1}-y_i}{F(y_{i+1})-F(y_i)}\Big)\vspace{3pt}\\
   &\le\|f\|_\infty\Big(\frac{\frac{F(y_{i+1})-F(y_{i})}{y_{i+1}-y_{i}}}{\frac{F(y_i)-F(y_{i-1})}{y_i-y_{i-1}}}\Big)\frac{(d-a_1)^2}{2}\Big(\frac{y_i-y_{i-1}}{F(y_i)-F(y_{i-1})}-\frac{y_{i+1}-y_i}{F(y_{i+1})-F(y_i)}\Big).\vspace{3pt}\\

   \end{array}\]

   Then, 
    \[\hspace{-0.8cm}\begin{array}{ll}
         &\frac{d}{d\epsilon}\Big|_{\epsilon=0} \mathcal P(v^\epsilon)  \ge\\
         &-\|f\|_\infty\Big(\frac{\frac{F(y_{i+1})-F(y_{i})}{y_{i+1}-y_{i}}}{\frac{F(y_i)-F(y_{i-1})}{y_i-y_{i-1}}}\Big)\frac{(d-a_1)^2}{2}\Big(\frac{y_i-y_{i-1}}{F(y_i)-F(y_{i-1})}-\frac{y_{i+1}-y_i}{F(y_{i+1})-F(y_i)}\Big)\vspace{3pt}\\
      &+\int_{L'_{i}}[-a_1(y_{i+1}-y_i)-a_2(F(y_{i+1})-F(y_i)) +c(z_{i+1})-c(z_{i}) ]f(x)\frac{1}{|z_i-z_{i+1}|}d\mathcal H^{m-1}(x) \vspace{3pt}\\
&+   \int_{L^0_{i-1}}[a_1(y_i-y_{i-1})+a_2(F(y_{i})-F(y_{i-1}))-c(z_i) + c(z_{i-1}) ]f(x)\frac{1}{|z_i-z_{i+1}|}d\mathcal H^{m-1}(x). 
\end{array}\]
   where $L_{i}'$ is the segment between $a$ and the point $B.$ By a similar argument to the one in the proof of Lemma \ref{case2}, we obtain  
\[
\begin{aligned}
&\frac{d}{d\epsilon} \Big|_{\epsilon=0} \mathcal{P}(v^\epsilon) \vspace{3pt}\\
\geq & -\frac{1}{2} \Big(\frac{F(y_{i+1}) - F(y_i)}{y_{i+1} - y_i} \Big)(d - a_1) 
\Big(\frac{y_i - y_{i-1}}{F(y_i) - F(y_{i-1})} - \frac{y_{i+1} - y_i}{F(y_{i+1}) - F(y_i)} \Big) \times \\[10pt]
& \Big( 3\|f\|_\infty + \|f_{x_2}\|_\infty \Big(1 + 2\frac{y_i - y_{i-1}}{F(y_i) - F(y_{i-1})} + \frac{c(z_{i+1}) - c(z_i)}{F(y_{i+1}) - F(y_i)} \Big) \Big)+ (d - a_1)\times \\[5pt]
&  \alpha \Big( \frac{y_i - y_{i-1}}{F(y_i) - F(y_{i-1})} - \frac{y_{i+1} - y_i}{F(y_{i+1}) - F(y_i)} 
+ \frac{c(z_{i+1}) - c(z_i)}{F(y_{i+1}) - F(y_i)} - \frac{c(z_i) - c(z_{i-1})}{F(y_i) - F(y_{i-1})} \Big) > 0,
\end{aligned}
\]
where we use the inequality  
\[
d - a_1 < \frac{F(y_{i+1}) - F(y_i)}{y_{i+1} - y_i},
\]  
which follows from an argument similar to the one used to establish \eqref{tan ineq}. This inequality ensures the positivity of $\frac{d}{d\epsilon} \Big|_{\epsilon=0} \mathcal{P}(v^\epsilon),$ as guaranteed by \ref{condition4}.
   \end{enumerate}
\end{enumerate}\end{proof}

We are now prepared to prove Theorem \ref{disc}.  With the lemmas from the previous section in hand, the proof is relatively straightforward.

\begin{proof}
Suppose that $u$ is a solution; we first verify that no two indifference segments intersect in $X$.  If two indifference curves \emph{do} intersect, Lemma \ref{triangle} yields a region $X_i$ sharing a boundary with only two adjacent regions, both of which intersect the lower right part of the boundary, and by Theorem \ref{between} and Lemma \ref{partition}, it must be adjacent to $X_{i-1}$ and $X_{i+1}$.  Lemmas \ref{case1}, \ref{case2} and \ref{case3} then yield a contradiction, and so we conclude that, indeed, we cannot have two indifference segments intersecting in $X$.  

It remains to show that this no intersection property implies nestedness.  We show this now. For this proof alone, closure is taken in $X$, rather than in $\mathbb{R}^m.$

    Let $i_0=\min\{p:\,\mu(X_p)>0\}.$ We have $X_{i_0}$ has only one adjacent region which is $X_{i_0+1}.$  Hence, $X_=^N(y_{i_0},k_{i_0})$ is the indifference segment between $X_{i_0}$ and $X_{i_0+1}$  for some $k_{i_0}.$ By construction, we have $\nu=Du_{\#}\mu,$ meaning  $\nu(y_{i_0})=\mu(X_{i_0}).$ As $X_{i_0}$ is adjacent to $X_{i_0+1},$ there exists $\beta\in[0,1]$ such that $\frac{\partial}{\partial x_1}u(x_1,\beta)$ increases from $y_{i_0}$ to $y_{i_0+1}$ which means $X_{i_0}=X^N_\le(y_{i_0},k_{i_0})$ and hence $k_{i_0}=k^N(y_{i_0}).$ Also, for some $k_{i_0+1}$ the set $X^N_=(y_{i_0+1},k_{i_0+1})$ is the boundary between $X_{i_0+1}$ and $X_{i_0+2}$ and it does not intersect with $X^N_=(y_{i_0},k^N(y_{i_0}))$ in $X.$ Then, $\overline{X_{i_0}\cup X_{i_0+1}}=X^N_\le(y_{i_0+1},k_{i_0+1})$ and $\mu(X_{i_0}\cup X_{i_0+1})=\mu(X^N_\le(y_{i_0+1},k_{i_0+1}))$  and by construction we get 
    \[\nu(\{y_{i_0},y_{i_0+1}\})=\mu(X_{i_0})+\mu(X_{i_0+1})=\mu(X^N_\le(y_{i_0+1},k_{i_0+1}))\] and we get $k_{i_0+1}=k^N(y_{i_0+1}).$

    Proceeding inductively, we get $\overline{\cup_{k=0}^iX_k}= \overline{X^N_\le(y_{i-1},k^N(y_{i-1}))\cup X_{i}}=X^N_\le(y_i,k_i)$ where $X^N_=(y_i,k_i)$ is the boundary between $X_i$ and $X_{i+1}.$ Then 
\[\nu_N(\{y_p:\,0\le p\le i\})=\sum_{k=0}^{i}\mu(X_k)=\mu(X^N_\le(y_{i-1},k^N(y_{i-1})))+\mu(X_{i})=\mu(X^N_\le(y_{i},k_i))\]
and so $k_i=k^N(y_i).$

 Therefore, \[X^N_\le(y_{i},k^N(y_{i}))\subset \overline{\cup_{k=0}^{j-1}X_k}\cup X_j\subseteq X_<(y_{j},k^N(y_{j}))\] for all $i<j$ such that $\mu(X_i),\mu(X_j)>0,$ which implies discrete nestedness of the optimal transport problem between $\mu$ and $\nu$. By Theorem \ref{between} and Proposition \ref{OT to PA discrete}, we get the discrete nestedness of the solution $u$ of \eqref{PA}. By Theorem \ref{nestchar}  we get $y^*(x)=y_i$ for all $x\in X_i.$
\end{proof}

The proof of Lemma \ref{upper_segment} follows.
\begin{proof}
    
Assume there exists $i$ such that $\overline{t_i}<1$ and $x(\overline{t_i})=(0,\overline{t_i}).$ Hence,

\[\frac{\partial}{\partial t_i}(\mathcal{P})_{|t_i=\overline{t_i}}=0,\] which implies that 
\[\hspace{-0.5cm}\begin{array}{ll}

&\quad (F(y_{i+1})-F(y_i))\sum_{k=i+1}^N\mu( X_k)+\frac{\partial}{\partial t_i}(\mu( X_i))(\sum_{k=0}^{i-1}(x(t_{k})\cdot (z_{k+1}-z_{k}))-c(z_i))\vspace{3pt}\\
&\quad +\frac{\partial}{\partial t_i}(\mu( X_{i+1}))(\sum_{k=0}^{i}(x(t_{k})\cdot (z_{k+1}-z_{k}))-c(z_{i+1}))\vspace{3pt}\\
&=(F(y_{i+1})-F(y_i))\sum_{k=i+1}^N\mu( X_k)+\frac{\partial}{\partial t_i}(\mu( X_i))(c(z_{i+1})-c(z_i)-t_i(F(y_{i+1})-F(y_i)))\\
&=0
\end{array}\]
where we use the fact that $\frac{\partial}{\partial t_i}(\mu( X_{i+1}))=-\frac{\partial}{\partial t_i}(\mu( X_{i}))<0.$
Hence,
\[\begin{array}{ll}\overline{t_i}=\frac{(F(y_{i+1})-F(y_i))\frac{\sum_{k=i+1}^N\mu( X_k)}{\frac{\partial}{\partial t_i}(\mu( X_i))}+c(z_{i+1})-c(z_i)}{F(y_{i+1})-F(y_i)}\ge\frac{c(z_{i+1})-c(z_i)}{F(y_{i+1})-F(y_i)}\ge\frac{c(z_1)-c(z_0)}{F(y_1)-F(y_0)}=\frac{c(z_1)}{F(y_1)}>1
\end{array}\] which is a contradiction.
\end{proof}

We present next the proof of Lemma \ref{m}.

\begin{proof} Note that as the price of each good $i>1$ clearly satisfies $v_i \geq c_i>0$ (where $c_i=c(z_i)$), then for $x$ sufficiently close to $0$ we have $\max_{1\le i\le N}\{x\cdot z_i-v_i\}<0 =x\cdot y_0-c(y_0)$; therefore, as positive mass of consumers choose the opt-out good, $\mu(X_0) >0$.

       For $i \geq 1$, suppose the inequality  $b((1,1),z_{i}) -b((1,1),z_{i-1}) -c_{i}+c_{i-1} >0$ holds for some $i$, but it is not bought.  Since the set of purchased goods is known to include $y_0$ and to be consecutive by Lemma \ref{between}, without loss of generality, assume that $i-1$ is the last bought good.  Nestedness of the solution implies that types $x$ near $(1,1)$ buy good $i-1$.  Lower the prices of the $i$th good to: 
$$
v_i =v_{i-1} +c_i-c_{i-1} +\epsilon.
$$
Then profits from good $i$ are higher than that from good $i-1$, so if consumers can be enticed to purchase it instead, profits will go up.  Note that for the highest consumer, we have
$$
 b((1,1),z_{i}) -v_i = b((1,1),z_{i}) -v_{i-1} -c_i+c_{i-1} -\epsilon
 >
 b((1,1),z_{i-1}) -v_{i-1}
$$
for small enough $\epsilon$.  Therefore, agents near $(1,1)$ will buy good $i$, increasing the profits and contradicting optimality of the previous pricing plan.  If this choice of $v_i$ induces an indifference curve with another good, we simply choose a higher $v_i$, so that the curve
$$
 b(x,z_{i}) -v_i = b(x,z_{i}) -v_{i-1}
$$
lies entirely in the region of consumers who originally purchased $i-1$.

Now, on the other hand, suppose that $i$ is the highest good that a consumer buys.  The nested structure implies that consumer $x=(1,1)$ buys it.  This means that the indifference curve between $i-1$ and $i$,
$$
 b(x,z_{i}) -v_i = b(x,z_{i-1}) -v_{i-1}
$$
passes below $(1,1)$.  Now assume the inequality \eqref{eqn: preference at cost} fails, so that
$$
  b((1,1),z_{i}) -b((1,1),z_{i-1}) -c_{i}+c_{i-1} \leq 0
$$
Now note that for $x <(1,1)$ component wise along the indifference curve, we have
$$
v_i-c_i =b(x,z_i) -b(x,z_{i-1})+ v_{i-1}-c_i < b((1,1), z_{i}) -b((1,1), z_{i-1})+ v_{i-1} -c_i \leq v_{i-1} -c_{i-1}.
$$
Therefore, profits from $i-1$ are higher than those from $i$.  Raising prices slightly for good $i$ then increases profits from those buying good $i$ while also pushing some to switch to good $i-1$, without altering the rest of the solution. This contradicts optimality of the original plan.
\end{proof}

The proof of Theorem \ref{cont.nestedness} is broken into several lemmas. 
\begin{lem}\label{conv}
    Let $(u_N)$ be a sequence of solutions of the monopolist's problem \eqref{PA} with data $(\mu, Y_N, c).$ Then, there exists a subsequence $(u_{N_k})$ such that $u_{N_k}\to u$ uniformly as $k\to\infty,$ where $u$ is a solution of the monopolist's problem \eqref{PA} with data $(\mu,Y,c)$.
\end{lem}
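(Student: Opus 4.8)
The plan is to establish compactness of the sequence $(u_N)$ in $C(X)$ and then pass to the limit in the variational characterization. First I would record that each $u_N$, being of the form $u_N(x) = \max_{0\le i\le N} \big(x\cdot z_i - v_N(z_i)\big)$ with the points $z_i$ lying in the fixed bounded set $Y$, is a supremum of affine functions with slopes $z_i \in Y$; hence each $u_N$ is convex and Lipschitz with constant bounded by $\sup_{z\in Y}|z| =: R$, uniformly in $N$. To get uniform boundedness (not just equi-Lipschitz-ness) I would use the participation constraint $u_N \ge b(\cdot,y_0) = x\cdot z_0 = 0$ together with an upper bound: since the monopolist can always charge at least enough to recover cost, optimality forces $v_N(z_i) \le \max_x b(x,z_i) = \sup_x x\cdot z_i$ for the goods actually sold (otherwise nobody buys $z_i$ and it may be discarded), and more simply $u_N(x)\le \max_{z\in Y}(x\cdot z) \le R$. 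So $0\le u_N \le R$ on $X$, and combined with the uniform Lipschitz bound, Arzel\`a--Ascoli yields a subsequence $u_{N_k}\to u$ uniformly, with $u$ convex, $R$-Lipschitz, and $u\ge 0$; in particular the gradient $Du$ exists a.e.\ and takes values in $\overline{\operatorname{conv}(Y)}$, but I would need to argue $Du \in Y$ a.e.\ — this follows because $u$ is a uniform limit of the $u_N$, whose subdifferentials are contained in $Y_{N}\subset Y$, and $Y$ is closed; standard stability of subdifferentials under uniform convergence of convex functions gives $\partial u(x)\subseteq \overline{Y}=Y$ at points of differentiability. Thus $u\in\mathcal{U}$ and $u\ge b(\cdot,y_0)$, so $u$ is admissible for the continuous problem \eqref{PA}.

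The second half is showing $u$ is optimal. I would argue by a two-sided inequality. For the easy direction, $\mathcal{P}_{Y_N}(u_N)\le \mathcal{P}_{Y}(u_N)$ is not quite what I want since $u_N$ need not be admissible for the $Y_N$-problem in the continuous sense... instead: since $Y_N\subseteq Y$, any competitor for the $Y_N$-problem is a competitor for the $Y$-problem, so $\sup_{\mathcal{U}(Y_N)} \mathcal{P} \le \sup_{\mathcal{U}(Y)}\mathcal{P}$, i.e.\ $\mathcal{P}(u_N) \le \sup_{\mathcal{U}(Y)}\mathcal{P} = \mathcal{P}(u^\star)$ for an optimal $u^\star$ of the continuous problem. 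Then I need $\mathcal{P}(u_N)\to \mathcal{P}(u)$, which requires continuity of the functional $\mathcal{P}(u) = \int_X\big(b(x,y^*(x)) - u(x) - c(y^*(x))\big)f(x)\,dx$ along the sequence. Writing $b(x,y^*(x)) - u(x) = v(y^*(x))$ and noting $v(y^*(x)) - c(y^*(x))$ can be re-expressed via the Legendre-type transform, the cleanest route is: $\mathcal{P}(u) = \int_X \big(x\cdot Du(x) - u(x) - c(Du(x))\big)f(x)\,dx$, using $y^*(x)=Du(x)$ a.e. Then $\int_X(x\cdot Du_{N} - u_{N})f\,dx \to \int_X(x\cdot Du - u)f\,dx$ because $u_{N}\to u$ uniformly, $Du_{N}\to Du$ a.e.\ (pointwise a.e.\ convergence of gradients of convex functions converging locally uniformly), the gradients are uniformly bounded by $R$, and $f\in L^1$, so dominated convergence applies; and $\int_X c(Du_{N})f\,dx\to\int_X c(Du)f\,dx$ likewise by continuity of $c$ on the compact set $Y$ and dominated convergence. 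This gives $\mathcal{P}(u)=\lim_k \mathcal{P}(u_{N_k})$.

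For the reverse inequality — that $\mathcal{P}(u)$ is at least as large as $\sup_{\mathcal{U}(Y)}\mathcal{P}$ — I would use density of $\bigcup_N Y_N$ in $Y$: given the optimal $u^\star$ for the continuous problem with associated price schedule $v^\star$ on $Y$, restrict $v^\star$ to $Y_N$ and let $\hat u_N(x)=\max_{z\in Y_N}(x\cdot z - v^\star(z))$; then $\hat u_N\uparrow u^\star$ pointwise (in fact uniformly, since $Y_N$ becomes dense and $v^\star$ can be taken Lipschitz/continuous as a $b$-transform), $\hat u_N$ is admissible for the $Y_N$-problem, so $\mathcal{P}(u_N)=\sup_{\mathcal{U}(Y_N)}\mathcal{P}\ge \mathcal{P}(\hat u_N)$, and $\mathcal{P}(\hat u_N)\to\mathcal{P}(u^\star)$ by the same dominated-convergence argument as above. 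Combining, $\mathcal{P}(u)=\lim\mathcal{P}(u_{N_k})\ge \mathcal{P}(u^\star)=\sup_{\mathcal{U}(Y)}\mathcal{P}$, and together with $\mathcal{P}(u)\le\sup_{\mathcal{U}(Y)}\mathcal{P}$ we conclude $u$ is optimal. The main obstacle I anticipate is the careful handling of the convergence $\mathcal{P}(\hat u_N)\to\mathcal{P}(u^\star)$ and the claim $\partial u\subseteq Y$ a.e.\ for the limit — one must rule out the limit consumer map "jumping off" the curve $Y$ into its convex hull, and rule out mass escaping to the endpoints; the convexity of $F$ and the equal-arclength spacing of the $z_i$ should make the approximation $\hat u_N\to u^\star$ uniform rather than merely pointwise, which is what makes the $\mathcal{P}$-continuity go through cleanly.
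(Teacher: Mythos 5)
Your proof is correct, and its overall skeleton is the same two-sided squeeze the paper uses: extract a uniformly convergent subsequence, show $\mathcal{P}(u_{N_k})\to\mathcal{P}(u)$, bound $\mathcal{P}(u_{N})$ above by the continuous optimum (since $Y_N\subseteq Y$), and bound it below by constructing discrete-feasible competitors approximating the continuous optimizer. The technical implementation differs at every step, though, and is generally more elementary. For compactness the paper works at the level of the product distributions $\nu_N=(Du_N)_\#\mu$, extracts a weakly convergent subsequence, and invokes stability of optimal transport to upgrade this to uniform convergence of the potentials; you instead apply Arzel\`a--Ascoli directly to the equi-Lipschitz, uniformly bounded family $u_N$ (the bound $0\le u_N\le R$ follows most cleanly from $v\ge 0$ and $v(z_0)=0$, which is cleaner than your appeal to optimality). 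For continuity of $\mathcal{P}$ the paper splits the functional into a Kantorovich term, a $\int u_N\,d\mu$ term, and $\int_Y c\,d\nu_N$, handling the last by weak convergence of $\nu_N$; you use a.e.\ convergence of gradients of locally uniformly convergent convex functions plus dominated convergence, which is equivalent but self-contained. For the lower bound the paper discretizes the measure $\overline{\nu}=D\overline{u}_\#\mu$ onto $Y_N$, whereas you restrict the price schedule $v^\star$ to $Y_N$; these are dual views of the same approximation, and yours has the advantage that admissibility of $\hat u_N$ for the $Y_N$-problem is immediate, while the paper must implicitly re-derive the payoff $\overline{u}_N$ from the discretized measure. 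The one point you flag as a worry, $\partial u\subseteq Y$ a.e., is handled correctly by your observation that at a common differentiability point $Du_{N_k}(x)\in Y_{N_k}\subseteq Y$ converges to $Du(x)$ and $Y$ is closed; note the paper does not even need this step explicitly since its limit $u$ is produced as a dual potential for $(\mu,\nu)$ with $\nu$ supported on $Y$.
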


\begin{lem}\label{noatoms}
 Suppose $\nu_N$ converges weakly to $\nu,$ and for each $N,$ the support of $\nu_N$ is consecutive; that is, $\{i:\nu_N(y_i)>0\}=\{i:0\le q_N\le i\le r_N\le N\}$ for some integers $q_N$ and $r_N.$  Let $\underline{y},\overline{y}\in Y$ such that $\nu(\{\underline{y}\})=\nu(\{\overline{y}\})=0$. If the optimal transport problem with marginals $(\mu,\nu_N)$ is discretely nested for all $N,$ then $X_=(\underline{y},k(\underline{y}))$ and $X_=(\overline{y},k(\overline{y}))$ do not intersect in  $X$.
\end{lem}


Together with Lemma \ref{conv} and Proposition \ref{OT to PA}, the next result will easily imply Theorem \ref{cont.nestedness}.
\begin{prop}\label{con.nest}
    Under the assumptions of Lemma \ref{noatoms}, let $(u_N)$ be a sequence of functions such that $u_N\to u$ uniformly, for some function $u,$ where $u_N$ is the solution of the dual problem of $(\mu,\nu_N),$ such that $\nu_N\to \nu$ weakly, for some $\nu.$ Then u solves the dual problem \eqref{eqn: OT dual} of $(\mu,\nu)$ and, if $(\mu,\nu_N)$ is discretely nested for all $N,$ then $(\mu,\nu)$ is nested. Moreover, the support of $\nu$ is connected.
\end{prop}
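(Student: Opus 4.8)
The plan is to split the statement into three parts: (i) $u$ solves the dual problem \eqref{eqn: OT dual} for $(\mu,\nu)$; (ii) $(\mu,\nu)$ is nested; (iii) $\mathrm{supp}\,\nu$ is connected (an interval). For (i), the idea is a standard stability argument for Kantorovich potentials: each $u_N$ is a $b$-convex function (a supremum of the affine-in-$z$ functions $x\mapsto b(x,z)-v_N(z)$ over $z\in\mathrm{supp}\,\nu_N$), and the family $\{u_N\}$ is equi-Lipschitz since $X$ and $Y$ are bounded and $b(x,z)=x\cdot z$ is smooth; combined with the uniform convergence $u_N\to u$ this forces $u$ to be $b$-convex, with $b$-transform $v(z)=\sup_x b(x,z)-u(x)$. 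Weak convergence $\nu_N\rightharpoonup\nu$ together with uniform convergence of the potentials passes the limit through $\int u_N\,d\mu+\int v_N\,d\nu_N$, and lower semicontinuity of the dual functional (or direct comparison with the optimal value, which also converges since $KP$ is stable under weak convergence of marginals) yields that $(u,v)$ is dual-optimal for $(\mu,\nu)$.

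For (iii), I would use that each $\mathrm{supp}\,\nu_N$ is consecutive, $\{i: q_N\le i\le r_N\}$. Passing to subsequences, $z_{q_N}\to\underline{z}_0$ and $z_{r_N}\to\overline{z}_0$ in $Y$; since $\nu_N\rightharpoonup\nu$ and each $\nu_N$ is supported on the arc between $z_{q_N}$ and $z_{r_N}$, the limit $\nu$ is supported on the (closed) sub-arc of $Y$ between $\underline{z}_0$ and $\overline{z}_0$, hence on a connected set. To see $\mathrm{supp}\,\nu$ is the whole sub-arc, I would argue that if $\nu$ assigned zero mass to an open sub-arc $(z',z'')$ strictly inside, then by part (ii)'s ingredients the level curves $X_=(z',k(z'))$ and $X_=(z'',k(z''))$ would have to coincide set-theoretically (both being the boundary between the consumers assigned to either side), contradicting Lemma \ref{noatoms} which says distinct such level curves (at points of zero $\nu$-mass) do not intersect in $X$ — here they would have to intersect everywhere. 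So there is no interior gap and the support is a full sub-arc.

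For (ii), nestedness, fix $\underline z<\overline z$ in $Y$ with $\nu((\underline z,\overline z))>0$; by continuity of $y\mapsto\nu((-\infty,y])$ and density of points of zero $\nu$-mass I may further assume $\nu(\{\underline z\})=\nu(\{\overline z\})=0$, so $k_\pm$ collapse to $k(\underline z),k(\overline z)$. I must show $X_{\le}(\underline z,k(\underline z))\subset X_{<}(\overline z,k(\overline z))$. The key geometric input is Lemma \ref{noatoms}: the two level curves $X_=(\underline z,k(\underline z))$ and $X_=(\overline z,k(\overline z))$ do not intersect in $X$. Each of these curves divides $X$ into a sub-level part and a super-level part, with prescribed $\mu$-mass (namely $\nu((-\infty,\underline z])$ and $\nu((-\infty,\overline z])$ respectively, the former strictly smaller since $\nu((\underline z,\overline z))>0$ and the endpoints carry no mass). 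Since the curves are disjoint in $X$ and each is connected (being a level set of the linear function $x\mapsto x\cdot(\overline z-\underline z)$ type expression — more precisely of $\tfrac{\partial b}{\partial y}(\cdot,\cdot)$ — it is the intersection of $X$ with a hyperplane, here a line in $\mathbb{R}^2$), one of them lies entirely on the sub-level side of the other; the mass inequality $\mu(X_\le(\underline z,k(\underline z)))<\mu(X_\le(\overline z,k(\overline z)))$ then forces $X_\le(\underline z,k(\underline z))$ to be the inner one, i.e. $X_\le(\underline z,k(\underline z))\subseteq X_{<}(\overline z,k(\overline z))$. This is exactly the nestedness inequality. One must also handle $\underline z,\overline z$ carrying $\nu$-atoms by approximating with nearby atomless points from inside (using monotonicity of the level-set inclusions and the already-established continuity properties); since $\nu$ has at most countably many atoms this causes no difficulty.

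The main obstacle is part (ii), and within it the step that two disjoint lines (level sets) in $X$ with nested $\mu$-masses must actually be nested as sets — one has to rule out the degenerate possibility that a level curve fails to separate $X$ (e.g. lies in a corner), which is where the hypothesis $\alpha\le f$ and the explicit definition of $k(\cdot)$ via $\mu$-mass, together with the fact (from Lemma \ref{noatoms}'s proof context and the discrete nestedness of each $\nu_N$) that these curves genuinely reach the boundary $([0,1]\times\{0\})\cup(\{1\}\times[0,1])$ as in Lemma \ref{partition}, must be invoked carefully. A secondary technical point is making the atomless-approximation and the subsequence extractions uniform enough that the final inclusions are genuine set inclusions and not merely inclusions up to $\mu$-null sets — but since $\mu$ charges every open set, null sets that are relatively open are empty, so closures can be controlled.
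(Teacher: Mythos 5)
Your overall strategy coincides with the paper's: part (i) is the standard stability of Kantorovich potentials (which the paper invokes via the argument in Lemma \ref{conv}), and for parts (ii) and (iii) you use Lemma \ref{noatoms} to rule out intersections of the level lines at atomless points, then deduce the set inclusion $X_{\le}(\underline z,k_+(\underline z))\subset X_{<}(\overline z,k_-(\overline z))$ from disjointness of two negatively sloped segments together with the ordering of their $\mu$-masses; the connectedness argument by producing a positive-mass region between the curves bounding a gap is likewise the paper's.

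The one place where your proposal is materially thinner than the paper is precisely the step you dismiss as causing ``no difficulty.'' The map $y\mapsto\nu((-\infty,y])$ is \emph{not} continuous when $\nu$ has atoms, so you cannot simply ``further assume'' $\nu(\{\underline z\})=\nu(\{\overline z\})=0$; the reduction from atoms to non-atoms is the bulk of the paper's proof. What is actually needed, and what the paper carries out, is a one-sided approximation $\overline z_\delta=(\overline y-\delta,F(\overline y-\delta))\uparrow\overline z$ and $\underline z_\varepsilon\downarrow\underline z$ through atomless points, together with a proof that $k(\overline z_{\delta})\to k_-(\overline z)$ (via continuity of $k\mapsto\mu(X_\le(z,k))$, the identity $\mu(X_\le(\overline z_\delta,k(\overline z_\delta)))=\nu([0,\overline z_\delta])\to\nu([0,\overline z))$, and the lower bound $f\ge\alpha$ to identify the limit), and an argument that an intersection of the limiting curves inside the open set $X$ forces an intersection of the approximating curves for small $\delta,\varepsilon$ — the paper does this by fixing a witness $x_0\in X_<(\underline z,k_+(\underline z))\setminus X_\le(\overline z,k_-(\overline z))$ and passing the strict inequalities to the limit. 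Your sketch names the right ingredients (monotonicity, density of atomless points, approximation from inside the interval), so the approach is sound, but as written this step is asserted rather than proved; filling it in would essentially reproduce the paper's argument.
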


Next is the proof of Lemma \ref{conv}.
\begin{proof}
As the sequence of measures $(\nu_N) :=((Du_N)_\#\mu)$ corresponding to the solutions $u_N$ of the monopolist's problem with data $(\mu, Y_N,c),$ all have support within the compact set $Y$, there exists a weak-convergent subsequence $\nu_N\to\nu$ for some probability measure $\nu$ on $Y$.
From the stability of the optimal transport problem  $(\mu,\nu_N)$ with surplus $b(x,y)=x\cdot z(y)$ \cite{Santambrogio}, we get that the corresponding payoff $u_N$ and pricing functions $v_N$ converges uniformly to $u$ and $v$ respectively, the corresponding payoff and pricing functions of the optimal transport problem with marginals $(\mu, \nu)$ (that is, solution to the dual problem \eqref{eqn: OT dual}). Also, we conclude that
\[\mathcal{W}_b(\mu,\nu_N):=\int_X b(x,Du_N(x))d\mu(x)\to \mathcal{W}_b(\mu,\nu)=\int_X b(x,Du(x))d\mu(x).\]

And from the uniform convergence of $(u_N)$ we get that
\[\int_X u_Nd\mu\to\int_X u d\mu.\]

Also, from the weak convergence of $\nu_N,$ we deduce that
\[\int_X c(Du_N)d\mu=\int_Yc(y)d\nu_N\to \int_Y c(y)d\nu=\int_X c(Du)d\mu.\]
Hence,
\[\mathcal{P}(u_N)\to\mathcal{P}(u).\]

Let $\overline{\nu}=D\overline u_\#\mu$ be the corresponding measure of the solution $\overline{u}$ of the monopolist's problem \eqref{PA} with data $(\mu,Y,c).$ There exists a sequence of discrete measures $\overline{\nu}_N$ such that for all $N$ the atoms belongs to $Y_N,$ and $\overline{\nu}_N\to\overline{\nu}.$ Similar to the previous argument we get that $\mathcal{P}(\overline{u}_N)\to\mathcal{P}(\overline{u}),$ where $\overline{u}_N$ are the corresponding payoffs for the optimal transport problem $(\mu,\overline{\nu}_N).$ Hence, \[\mathcal{P}(\overline{u})=\lim_{N\to\infty}\mathcal{P}(\overline{u}_N)\le\lim_{N\to\infty}\mathcal{P}(u_N)=\mathcal{P}(u)\le\mathcal{P}(\overline{u}).\]
Therefore, $u$ is a solution for the monopolist's problem \eqref{PA} with data $(\mu,Y,c).$
\end{proof}
We now prove Lemma \ref{noatoms}.
\begin{proof}
    Let $\underline{y},\overline{y}\in Y$ such that $\underline{y}<
\overline{y},$ and $\underline{y}$ and $\overline{y}$ are not  atoms with respect to $\nu.$ 
Hence,\[\begin{array}{ll}\limsup_{N\to\infty}\nu_N([0,\underline{y}])\le\nu([0,\underline{y}])&=\nu([0,\underline{y}))\\
&\le\liminf_{N\to \infty}\nu_N([0,\underline{y}))\\
&\le \limsup_{N\to \infty}\nu_N([0,\underline{y}))\le\limsup_{N\to\infty}\nu_N([0,\underline{y}]),
\end{array}\]
    where the first and second inequalities comes from the definition of weak-convergence as $[0,\underline{y}]$ and $[0,\underline{y})$ are relatively closed and open in $Y$ respectively, and the equality comes from the fact that $\underline{y}$ is not an atom. Similarly for $\overline{y}$ we get 
\[\lim_{N\to\infty}\nu_N([0,\overline{y}])=\nu([0,\overline{y}])=\nu([0,\overline{y})).\]
    Let  $y_i^N\in Y_N\,$ and $y_j^N\in Y_N$  where for each $N,\,  i=\min\{k\,:\,y_k^N\in[\underline{y},\overline{y}) \}$ and $\,j=\max\{k\,:\,y_k^N\in[\underline{y},\overline{y}) \}$ and $y^N_p<y^N_{p+1}.$     

Note that $y_i^N\to \underline{y},$  $y_j^N \to \overline{y},$  $\frac{F(y_{i}^N)-F(y_{i-1}^N)}{y_{i}^N-y_{i-1}^N}\to F'(\underline{y})$ and $\frac{F(y_{j+1}^N)-F(y_j^N)}{y_{j+1}^N-y_j^N}\to F'(\overline{y})$. 

\hspace{-6pt}Consider the upper points of intersection 
\[d_N\in \overline{X_=^N(y_{i-1}^N,k^N(y_{i-1}^N))}\cap \big(\{0\} \times [0,1] \cup[0,1] \times \{1\}\big).\] There exists a convergent subsequence of $(d_N)$ that converges to $d\in\partial X.$ We consider the set $D=X_<(\underline{y},d\cdot(1,F'(\underline{y})))=\Big\{x\in X:\, (x-d)\cdot(1,F'(\underline{y}))<0\Big\}.$ We  claim that $\mu(D)=\nu([0,\underline{y}]).$ 

Suppose that $\mu(D)<\nu([0,\underline{y}])=\mu(X_\le(\underline{y},k(\underline{y}))),$ then $X_=(\underline{y},d\cdot(1,F'(\underline{y})))\subset X_<(\underline{y},k(\underline{y})).$ Let $d_{\underline{y}}$ be the upper intersection in $\overline{X_=(\underline{y},k(\underline{y}))}\cap \partial X,$ so $d\neq d_{\underline{y}}.$ We claim that there exists $\varepsilon>0$ such that $X^N_\le(y_{i-1}^N,k^N(y_{i-1}^N))\subset X_<(\underline{y},k(\underline{y})-\varepsilon)$ for all $N$ large enough. As $d\in \overline{X_<(\underline{y},k(\underline{y}))}\setminus X_=(\underline{y},k(\underline{y}))$ we get $(d_{\underline{y}}-d)\cdot(1,F'(\underline{y}))>0$ and we  take $0<\varepsilon<(d_{\underline{y}}-d)\cdot(1,F'(\underline{y})),$ then for $x\in X^N_\le(y_{i-1}^N,k^N(y_{i-1}^N)),$ we have 
\[(x-d_N)\cdot\Big(1,\frac{F(y_{i}^N)-F(y_{i-1}^N)}{y_{i}^N-y_{i-1}^N}\Big)\le0.\]
Knowing that $k(\underline{y})=d_{\underline{y}}\cdot(1,F'(\underline{y}))$ we get
\[\begin{array}{ll}\hspace{-0.2cm}
&x\cdot(1,F'(\underline{y}))-k(\underline{y})+\varepsilon\vspace{3pt}\\
&=(x-d_{\underline{y}})\cdot (1,F'(\underline{y}))+\varepsilon \vspace{3pt}\\
&=(x-d_N)\cdot(1,F'(\underline{y}))+(d_N-d_{\underline{y}})\cdot(1,F'(\underline{y}))+\varepsilon\vspace{3pt}\\
&=(x-d_N)\cdot\Big(1,\frac{F(y_{i}^N)-F(y_{i-1}^N)}{y_{i}^N-y_{i-1}^N}\Big)+(x-d_N)\cdot\Big((1,F'(\underline{y}))
-\Big(1,\frac{F(y_{i}^N)-F(y_{i-1}^N)}{y_{i}^N-y_{i-1}^N}\Big)\Big)\vspace{3pt}\\
&\quad +(d_N-d)\cdot(1,F'(\underline{y}))+(d-d_{\underline{y}})\cdot(1,F'(\underline{y}))+\varepsilon\vspace{3pt}\\
&\le (x-d_N)\cdot\Big(1,\frac{F(y_{i}^N)-F(y_{i-1}^N)}{y_{i}^N-y_{i-1}^N}\Big)+(1,1)\cdot\Big((1,F'(\underline{y}))
-\Big(1,\frac{F(y_{i}^N)-F(y_{i-1}^N)}{y_{i}^N-y_{i-1}^N}\Big)\Big)\vspace{3pt}\\
&\quad+(d_N-d)\cdot(1,F'(\underline{y}))+(d-d_{\underline{y}})\cdot(1,F'(\underline{y}))+\varepsilon<0
\end{array}\]

where the second and third terms go to zero and the first is non-positive and the fourth term $(d-d_{\underline{y}})\cdot(1,F'(\underline{y}))+\varepsilon$ is negative by the choice of $\varepsilon.$ Therefore, for large enough $N,$ 
we have $x\cdot(1,F'(\underline{y}))-k(\underline{y})+\varepsilon<0$ and then $x\in X_<(\underline{y},k(\underline{y})-\varepsilon) $ which proves our claim. As $f\ge\alpha>0,$ we have 
\[\begin{array}{ll}
\mu(X_\le(\underline{y},k(\underline{y})))-\mu(X^N_\le(y_{i-1}^N,k^N(y_{i-1}^N)))&=\mu(X_\le(\underline{y},k(\underline{y}))\setminus X^N_\le(y_{i-1}^N,k^N(y_{i-1}^N)))\\
&\ge\mu(X_\le(\underline{y},k(\underline{y}))\setminus X_<(\underline{y},k(\underline{y})-\varepsilon))>0\end{array}\] as $X^N_\le(y_{i-1}^N,k^N(y_{i-1}^N))\subset X_<(\underline{y},k(\underline{y})-\varepsilon)\subset X_\le(\underline{y},k(\underline{y})).$ Hence, \[\lim_{N\to\infty}\mu(X_\le(\underline{y},k(\underline{y})))-\mu(X^N_\le(y_{i-1}^N,k^N(y_{i-1}^N)))>0.\] But, 
\[\lim_{N\to\infty}\mu(X_\le(\underline{y},k(\underline{y})))-\mu(X^N_\le(y_{i-1}^N,k^N(y_{i-1}^N)))=\lim_{N\to\infty}\nu([0,\underline{y}])-\nu_N([0,\underline{y}])=0\] which is a contradiction. Using a similar argument we can prove that $\mu(D)$ cannot be bigger than $\nu([0,\underline{y}]),$ which implies $\mu(D)=\nu([0,\underline{y}]).$  This establishes the claim.

Since $\mu(D)=\lim_{N\to\infty}\nu_N([0,{\underline{y}}])=\nu([0,{\underline{y}}]),$ we get $D=X_<({\underline{y}},k({\underline{y}}))$ and then $\partial D\cap X=X_=(\underline{y},k(\underline{y})).$ Similarly, we can prove that  $E=X_<(\overline{y},k(\overline{y}))$ where $E=\{x\in X:\, (x-e)\cdot(1,F'(\overline{y}))<0\}$ such that $e$ is the limit of a subsequence $(e_N)$ and $e_N\in \overline{X_=^N(y_j^N,k^N(y_j^N))}\cap(\{0\}\times [0,1]\cup [0,1]\times\{1\}).$

Since $(\mu,\nu_N)$ is discretely nested, each point $ x_N \in \mathbb{R}^2 \setminus X $ is the unique intersection of the lines $ X_=^N(y_i^N, k^N(y_i^N)) $ and $ X_=^N(y_j^N, k^N(y_j^N)) $, and hence satisfies the linear system
\begin{equation}\label{eq3}
(x_N - d_N) \cdot (z_i^N - z_{i-1}^N) = 0, \quad (x_N - e_N) \cdot (z_{j+1}^N - z_j^N) = 0
\end{equation}
where $z_r^N=(y_r^N,F(y_r^N))$. This system determines $ x_N $ uniquely since the direction vectors $ z_i^N - z_{i-1}^N $ and $ z_{j+1}^N - z_j^N $ are linearly independent for all large $ N $. As $ N \to \infty $, the data $ d_N, e_N $, and the direction vectors converge to limits $ d, e $ and $ (1, F'(\underline{y})), (1, F'(\overline{y})) $ respectively, (after multiplying equations \eqref{eq3}  by $\frac{1}{y_{i}^{N}-y_{i-1}^{N}}$ and $\frac{1}{y_{j+1}^{N}-y_{j}^{N}}$ respectively) , which are also linearly independent. Hence, the linear systems converge to a limiting system that remains invertible, and it follows that $ x_N \to \underline{x} $, the unique  solution to
\[
(\underline{x} - d) \cdot (1, F'(\underline{y})) = 0, \quad (\underline{x} - e) \cdot (1, F'(\overline{y})) = 0.
\]
Since $ \mathbb{R}^2 \setminus X $ is closed and each $ x_N \in \mathbb{R}^2 \setminus X $, we conclude that  $ \underline{x} \in \mathbb{R}^2 \setminus X $. Therefore,  $X_=(\underline{y},k(\underline{y}))$ and $X_=(\overline{y},k(\overline{y}))$ do not intersect in $X.$

\end{proof}
We turn now to the proof of Proposition \ref{con.nest}.
\begin{proof}
    Let $\underline{y},\overline{y}\in Y$ such that $\underline{y}<\overline{y}$. We will prove that $X_=(\underline{y},k_+(\underline{y}))$ does not intersect $X_=(\overline{y},k_-(\overline{y})).$   Suppose that $X_=(\underline{y},k_+(\underline{y}))$ intersects $X_=(\overline{y},k_-(\overline{y})).$ For all $\delta_0>0,$ there exists $\delta_0>\delta>0$ such that $\overline{y}_\delta$ is not an atom.  We claim that for small enough $\delta,$  $X_=(\overline{y}_\delta,k(\overline{y}_\delta))$ intersects $X_=(\underline{y},k_+(\underline{y})).$ Suppose that for all $\delta>0,$ $X_=(\overline{y}_\delta,k(\overline{y}_\delta))$ does not intersect $X_=(\underline{y},k_+(\underline{y})).$ Let $x_0\in X_<(\underline{y},k_+(\underline{y}))\setminus X_\le(\overline{y},k_-(\overline{y})),$ which means 
    \begin{equation}\label{delta}
    x_0\cdot(1,F'(\underline{y}))<k_+(\underline{y}) \text{ and } x_0\cdot(1,F'(\overline{y}))>k_-(\overline{y}).
    \end{equation} Since $\nu([0,\overline{y}_\delta])\ge\nu([0,\underline{y}])$ for small enough $\delta,$ and as $f\ge\alpha>0,$ we have $X_\le(\underline{y},k_+(\underline{y}))\subseteq X_<(\overline{y}_\delta,k(\overline{y}_\delta)),$ otherwise $X_=(\overline{y}_\delta,k(\overline{y}_\delta))\subset X_\le(\underline{y},k_+(\underline{y}))$ and since $X_=(\overline{y}_\delta,k(\overline{y}_\delta))$ does not intersect $X_=(\underline{y},k_+(\underline{y}))$ and both have negative slopes, we get
    \[\begin{array}{ll}\nu([0,\overline{y}_\delta])=\mu(X_\le(\overline{y}_\delta,k(\overline{y}_\delta)))&=\mu(X_\le(\underline{y},k_+(\underline{y})))-\mu(X_\le(\underline{y},k_+(\underline{y}))\setminus X_\le(\overline{y}_\delta,k(\overline{y}_\delta)) )\\
    &<\mu(X_\le(\underline{y},k_+(\underline{y})))=\nu([0,\underline{y}])
    \end{array}\] which is a contradiction. From the inclusion $X_\le(\underline{y},k_+(\underline{y}))\subseteq X_<(\overline{y}_\delta,k(\overline{y}_\delta)),$ we get that 
    \begin{equation}\label{deltaineq}x_0\cdot (1,F'(\overline{y}_\delta))< k(\overline{y}_\delta)
    \end{equation} for all $\delta$ small enough. There exists a convergent subsequence $(k(\overline{y}_{\delta_p}))$ such that $k(\overline{y}_{\delta_p})\to \beta$ as $\delta_p \rightarrow 0$ for some $\beta.$ Since $\mu(X_\le(\overline{y}_\delta,k(\overline{y}_\delta)))-\nu([0,\overline{y}_\delta])=0$ for all $\delta,$ as $\delta\to 0,$ we get \[\mu(X_\le(\overline{y},\beta))-\nu([0,\overline{y}))=0\] by the continuity of $\mu(X_\le(y,k))$ and the fact that $\nu([0,\overline{y}_\delta])\to \nu([0,\overline{y})).$ But, $\mu(X_\le(\overline{y},k_-(\overline{y})))=\nu([0,\overline{y})),$ which implies $\beta=k_-(\overline{y}),$ and $k(\overline{y}_{\delta_p})\to k_-(\overline{y})$ as $f\ge\alpha.$ Taking the limit in \eqref{deltaineq} as $p\to\infty,$ we get \[x_0\cdot (1,F'(\overline{y}))\le k_-(\overline{y})\] which  contradicts inequality \eqref{delta} and proves our claim. Let $\underline{y}_\varepsilon=(\underline{y}+\varepsilon,F(\underline{y}+\varepsilon))$ such that $\underline{y}_\varepsilon$ is not an atom. Similar to the previous argument we can prove that for small enough $\varepsilon$ we have $X_=(\underline{y}_\varepsilon,k(\underline{y}_\varepsilon))$ intersects $X_=(\overline{y}_\delta,k(\overline{y}_\delta)).$ Since $\overline{y}_\varepsilon$ and $\overline{y}_\delta$ are not atoms, by  Lemma \ref{noatoms}, $X_=(\overline{y}_\delta,k(\overline{y}_\delta))$ does not intersect $X_=(\underline{y}_\varepsilon,k(\underline{y}_\varepsilon)),$ which is a contradiction. Hence, $X_=(\underline{y},k_+(\underline{y}))$ does not intersect $X_=(\overline{y},k_-(\overline{y})).$ 

As both  $X_=(\underline{y},k_+(\underline{y}))$ and $X_=(\overline{y},k_-(\overline{y}))$ have negative slopes, we have two possibilities, either $X_\le(\underline{y},k_+(\underline{y}))\subset X_<(\overline{y},k_-(\overline{y}))$ or $X_\le(\overline{y},k_-(\overline{y}))\subset X_<(\underline{y},k_+(\underline{y})).$ If $X_\le(\overline{y},k_-(\overline{y}))\subset X_<(\underline{y},k_+(\underline{y})),$ this implies that $\nu([0,\overline{y}))< \nu([0,\underline{y}])$ and that is a contradiction. Then $X_\le(\underline{y},k_+(\underline{y}))\subset X_<(\overline{y},k_-(\overline{y})) $ and therefore the optimal transport problem $(\mu,\nu)$ is nested.  

Turning to the assertion about connectedness of the support, denote by $supp(\nu)$ the support of $\nu.$ Suppose that $supp(\nu)$ is not connected, then there exists $\underline{y},\overline{y}\in supp(\nu)$ such that there exists $\zeta\in[\underline{y},\overline{y}]$ where $\zeta\notin supp(\nu).$ This implies that there exist $\zeta_1,\zeta_2$ such that $\zeta\in(\zeta_1,\zeta_2)\subset [\underline{y},\overline{y}]$ and $\nu((\zeta_1,\zeta_2))=0.$ Then, $\nu([0,\zeta_1])=\nu([0,\zeta_2])\le\nu([0,\overline{y}])\le1.$ Since  $X_{\le}(\zeta_1,k_+(\zeta_1))\subset X_<(\zeta_2,k_-(\zeta_2))$ by the previous part, we get $\mu(X_<(\zeta_2,k_-(\zeta_2))\setminus X_{\le}(\zeta_1,k_+(\zeta_1)))>0$ as the density $f\ge\alpha>0$ and $\nu([0,\zeta_2])<1.$ But, $0=\nu((\zeta_1,\zeta_2))=\mu(X_<(\zeta_2,k_+(\zeta_2))\setminus X_{\le}(\zeta_1,k_-(\zeta_1)))$ which is a contradiction. Therefore, $supp(\nu)$ is connected. 
\end{proof}
We next prove Corollary \ref{c1}.
\begin{proof}
    Since the solution $u$ of the monopolist's problem is nested, and its corresponding $\nu$ has a connected support (due to Proposition \ref{con.nest}), using Theorem 4 in \cite{ChiapporiMcCannPass17}, we conclude that the optimal map $Du$ agrees $\mu-a.e.$ with a continuous function.
\end{proof}

\section{Uniqueness of solutions}\label{uniqueness}
We deal with the alternate formulation of the monopolist's problem introduced in Section 5, and proven to be equivalent to the original in Theorem \ref{p(t)}.

The theorem below provides conditions under which the solution is unique. 
\begin{theorem}\label{thm: uniqueness}
Under the assumption in Theorem \ref{p(t)}, if $\|f_{x_1}\|_\infty\le\alpha$ and $\frac{1}{F'(\Tilde{y})^2} f_{x_2}(1,x_2)+\frac{1}{F'(\Tilde{y})}f_{x_1}(1,x_2)\ge\|f_{x_1x_1}\|_\infty,$  then the optimal $(\overline{t}_i)$ are unique and hence the corresponding $u$ is the unique solution of the monopolist's problem.
\end{theorem}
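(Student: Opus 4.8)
The plan is to exploit the decoupling of the first-order conditions noted after Theorem \ref{p(t)}: since $\frac{\partial \mathcal P}{\partial t_i}$ depends only on $t_i$, the Hessian of $\mathcal P$ on $\mathcal B$ is diagonal, so it suffices to locate, for each $i$ separately, a one-dimensional interval on which $t_i \mapsto \mathcal P(t_0,\dots,t_{M-1})$ is strictly concave and which must contain every maximizer; the product of these intervals is convex, and strict concavity there forces uniqueness. First I would compute $\frac{\partial \mathcal P}{\partial t_i}$ explicitly, writing $\mu(X_i)$ and $\mu(X_{i+1})$ as integrals over the regions $X_i,X_{i+1}$ cut out by the indifference line $l_i(t_i) = \{x_2 = -\frac{y_{i+1}-y_i}{F(y_{i+1})-F(y_i)}(x_1 - t_i)+1\}$, and using $\frac{\partial}{\partial t_i}\mu(X_{i+1}) = -\frac{\partial}{\partial t_i}\mu(X_i) = -\cos(\theta_i)\int_{l_i}f\,d\mathcal H^{m-1}$. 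This yields
\[
\frac{\partial\mathcal P}{\partial t_i} = (y_{i+1}-y_i)\int_{D_i(t_i)}f\,dx + \big(c(z_{i+1})-c(z_i) - t_i(y_{i+1}-y_i) - F(y_{i+1})+F(y_i)\big)\int_{l_i(t_i)} f\cos(\theta_i)\,d\mathcal H^{m-1},
\]
where $D_i(t_i) = \{x_2 \ge -\frac{y_{i+1}-y_i}{F(y_{i+1})-F(y_i)}(x_1-t_i)+1\}$.

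Next I would split into the two geometric cases according to whether $l_i(t_i)$ exits $X$ through the bottom edge $[0,1]\times\{0\}$ (equivalently $t_i \le 1 - \frac{F(y_{i+1})-F(y_i)}{y_{i+1}-y_i}$) or through the right edge $\{1\}\times[0,1]$. In the first case, parametrizing the boundary integral over $x_2\in[0,1]$ and differentiating once more, the term $-2(y_{i+1}-y_i)\int_0^1 f\,dx_2$ dominates the term involving $\int_0^1 f_{x_1}\,dx_2$: using $\|f_{x_1}\|_\infty \le \alpha \le \int_0^1 f\,dx_2$, together with $t_i<1$ and $F(y_{i+1})-F(y_i) < c(z_{i+1})-c(z_i)$ (which follows from \ref{condition2} and the standing hypothesis $c(z_1)>F(y_1)$), one gets $\frac{\partial^2\mathcal P}{\partial t_i^2}<0$ on the whole relevant sub-interval $[\underline t_i, 1-\frac{F(y_{i+1})-F(y_i)}{y_{i+1}-y_i}]$, where $\underline t_i := \frac{c(z_{i+1})-c(z_i)-(F(y_{i+1})-F(y_i))}{y_{i+1}-y_i}$ is a lower bound for any critical point (below it $\frac{\partial \mathcal P}{\partial t_i}>0$). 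In the second case, $\frac{\partial^2\mathcal P}{\partial t_i^2}$ need not be negative, so instead I would differentiate a third time; the hypothesis $\frac{1}{F'(\tilde y)^2}f_{x_2}(1,x_2) + \frac{1}{F'(\tilde y)}f_{x_1}(1,x_2) \ge \|f_{x_1x_1}\|_\infty$, combined with $\frac{y_{i+1}-y_i}{F(y_{i+1})-F(y_i)} > \frac{1}{F'(\tilde y)}$ and the sign $t_i(y_{i+1}-y_i)+F(y_{i+1})-F(y_i) - (c(z_{i+1})-c(z_i)) < 0$, gives $\frac{\partial^3\mathcal P}{\partial t_i^3}>0$ on $[1-\frac{F(y_{i+1})-F(y_i)}{y_{i+1}-y_i},1]$. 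Hence $t_i\mapsto \mathcal P$ has at most one inflection point $\hat t_i$ in that interval (set $\hat t_i = 1$ if none), and $\hat t_i$ does not depend on the other $t_j$.

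Finally I would assemble the argument: by Theorem \ref{disc} together with Lemmas \ref{case1}, \ref{case2}, \ref{case3} and Lemma \ref{m}, any maximizer of $\mathcal P$ over $\mathcal B$ corresponds to a genuine (discretely nested) solution and is therefore a local maximizer, so $\frac{\partial^2\mathcal P}{\partial t_i^2}\le 0$ at it for every $i$; combined with the sign of $\frac{\partial \mathcal P}{\partial t_i}$ below $\underline t_i$, this places every maximizer in the set $A = \bigcap_{i=0}^{M-1} A_i$ with $A_i = \{\,\underline t_i < t_i \le \hat t_i\,\}\cap\mathcal B$. On $A_i$ we have shown $\frac{\partial^2\mathcal P}{\partial t_i^2}<0$ (negativity is established on the bottom-edge sub-interval directly, and on the right-edge sub-interval $(\,1-\tfrac{F(y_{i+1})-F(y_i)}{y_{i+1}-y_i},\,\hat t_i\,]$ it follows because $\frac{\partial^2\mathcal P}{\partial t_i^2}$ is increasing and vanishes at $\hat t_i$), so the diagonal Hessian of $\mathcal P$ is negative definite throughout the convex set $A$; thus $\mathcal P$ is strictly concave on $A$, forcing a unique maximizer $(\overline t_i)$, and by Theorem \ref{p(t)} the associated $u$ is the unique solution of \eqref{PA}. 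The main obstacle I anticipate is the right-edge case: there $\frac{\partial^2 \mathcal P}{\partial t_i^2}$ genuinely can be positive, so one must carefully track the third derivative and verify that the inflection point $\hat t_i$ lies above $1-\frac{F(y_{i+1})-F(y_i)}{y_{i+1}-y_i}$ and that no maximizer escapes past it — this is where the somewhat unusual hypothesis on $f_{x_1x_1}$ is needed and where the bookkeeping with the geometry of $l_i(t_i)$ hitting the corner $(1,0)$ must be handled with care.
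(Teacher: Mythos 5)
Your proposal matches the paper's own proof essentially step for step: the same decoupling/diagonal-Hessian observation, the same explicit formula for $\frac{\partial\mathcal P}{\partial t_i}$, the same two geometric cases (bottom edge handled via a direct second-derivative bound on $[\underline t_i,\,1-\tfrac{F(y_{i+1})-F(y_i)}{y_{i+1}-y_i}]$, right edge via positivity of the third derivative and a unique inflection point $\hat t_i$), and the same conclusion that every maximizer lies in the convex set $A=\bigcap_i A_i$ where $\mathcal P$ is strictly concave. The argument is correct and is the paper's argument.
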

\begin{proof}
Using Theorem \ref{p(t)}, we can recast the problem as maximizing \[\begin{array}{ll}
 \mathcal{P}(t_0,\dots,t_{M-1})&=\sum_{i=1}^M(v_i-c(z_i))\mu( X_i)\vspace{3pt}\\
 &=\sum_{i=1}^M(\sum_{k=0}^{i-1}(\overline{x}(t_{k})\cdot (z_{k+1}-z_{k}))-c(z_i))\mu( X_i)\vspace{3pt}\\
 &=\sum_{i=1}^M(\sum_{k=0}^{i-1}(t_{k} (y_{k+1}-y_{k})+F(y_{k+1})-F(y_{k}))-c(z_i))\mu( X_i).
 \end{array}\]

 By Lemma \ref{between}, we know that all products between $0$ and $M$ have positive mass  of customers and since the segments $X_=(y_i,k^N(y_i))$ intersect outside $\overline{X}$, then the maximizer is attained at $\overline{t_0}<\overline{t_1}<\dots<\overline{t_M},$ where $(\overline{t_i})$ are critical numbers of $\mathcal{P}.$ After differentiating with respect 
 to $t_i$ we get 
 \begin{equation}\hspace{-2pt}
 \begin{array}{ll}
\frac{\partial\mathcal{P}}{\partial t_i}\vspace{3pt}\\
=&(y_{i+1}-y_i)\sum_{k=i+1}^M\mu( X_k)+\frac{\partial}{\partial t_i}(\mu( X_i))(\sum_{k=0}^{i-1}(x(t_{k})\cdot (z_{k+1}-z_{k}))-c(z_i))\vspace{3pt}\\
&+\frac{\partial}{\partial t_i}(\mu( X_{i+1}))(\sum_{k=0}^{i}(x(t_{k})\cdot (z_{k+1}-z_{k}))-c(z_{i+1}))\vspace{3pt}\\
=&(y_{i+1}-y_i)\sum_{k=i+1}^M\mu( X_k)\vspace{3pt}\\
&+\frac{\partial}{\partial t_i}(\mu( X_i))(\sum_{k=0}^{i-1}(t_{k} (y_{k+1}-y_{k})+F(y_{k+1})-F(y_{k}))-c(z_i))\vspace{3pt}\\
&-\frac{\partial}{\partial t_i}(\mu( X_i))(\sum_{k=0}^{i}(t_{k} (y_{k+1}-y_{k})+F(y_{k+1})-F(y_{k}))-c(z_{i+1}))\vspace{3pt}\\
=&(y_{i+1}-y_i)\sum_{k=i+1}^M\mu( X_k)\vspace{3pt}\\
&+\frac{\partial}{\partial t_i}(\mu( X_i))(c(z_{i+1})-c(z_i)-t_i(y_{i+1}-y_i)-F(y_{i+1})+F(y_i)).\vspace{3pt}\\
=&(y_{i+1}-y_i)\int_{D_i(t_i)}f(x)dx_1\,dx_2\vspace{3pt}\\
&+\cos(\theta_i)\int_{l_i(t_i)}f(x)d\mathcal H^{m-1}(x)(c(z_{i+1})-c(z_i)-t_i(y_{i+1}-y_i)-F(y_{i+1})+F(y_i))\label{tbar}.
\end{array}
\end{equation}
where $D_i(t)=\{x=(x_1,x_2):\, x_2\ge-\frac{y_{i+1}-y_i}{F(y_{i+1})-F(y_i)}(x_1-t)+1\},$  $\theta_i$ is the angle between the $x_1$-axis and the vector $z_{i+1}-z_i$ and $l_i(t)$ is the segment of indifference points between $ X_i$ and $ X_{i+1}$ (the line $x_2=-\frac{y_{i+1}-y_i}{F(y_{i+1})-F(y_i)}(x_1-t)+1$) and we use  $\frac{\partial}{\partial t_i}(\mu( X_i))=\frac{d}{ds}(\mu( X_i))\frac{ds}{dt_i}=\cos(\theta_i)\int_{l_i}f(x)d\mathcal H^{m-1}(x),$ and $s$ is the variable in the direction of $z_{i+1}-z_i$ which is perpendicular to $l_i.$  

 At this point, we note that $\frac{\partial\mathcal{P}}{\partial t_i}$ depends on $t_i$ but not on any $t_j$ for $j \neq i$.  The Hessian of $\frac{\partial\mathcal{P}}{\partial t_i}$ is therefore diagonal, and to assert its strict concavity on a region one must only show that those diagonal elements $\frac{\partial^2\mathcal{P}}{\partial t_i^2}$ are negative.

We consider two cases. The first case is when $l_i$ reaches the $x_1-$axis, then $t_i$ lies in $\Big[0,1-\frac{F(y_{i+1})-F(y_i)}{y_{i+1}-y_i}\Big]$ as $\Big(1-\frac{F(y_{i+1})-F(y_i)}{y_{i+1}-y_i},1\Big)$ is the intersection between $[0,1]\times\{1\}$ and the line passing through $(1,0)$ with slope equals to $-\frac{y_{i+1}-y_i}{F(y_{i+1})-F(y_i)}.$ We then have
\[\begin{array}{ll}
\frac{\partial\mathcal{P}}{\partial t_i}&=(y_{i+1}-y_i)\int_{D_i(t_i)}f(x)dx_1\,dx_2\vspace{3pt}\\
&\quad +(c(z_{i+1})-c(z_i)-t_i(y_{i+1}-y_i)-F(y_{i+1})+F(y_i))\times \vspace{3pt}\\
&\quad\quad\int_0^1 f\Big(t_i+(1-x_2)\frac{F(y_{i+1})-F(y_{i})}{y_{i+1}-y_{i}},x_2\Big)dx_2
\end{array}\]

Note that any critical point in this case must lie in the region where $ t_i>\underline{t}_i:=\frac{c(z_{i+1})-c(z_i)-(F(y_{i+1})-F(y_i))}{y_{i+1}-y_i}$.
We differentiate the expression with respect to $t_i$ to get

\[\begin{array}{ll}
\frac{\partial^2\mathcal{P}}{\partial t_i^2}&=-2(y_{i+1}-y_i)\int_0^1 f\Big(t_i+(1-x_2)\frac{F(y_{i+1})-F(y_{i})}{y_{i+1}-y_{i}},x_2\Big)dx_2\vspace{3pt}\\
&\quad-(t_i(y_{i+1}-y_i)+F(y_{i+1})-F(y_i)-(c(z_{i+1}-c(z_i)))\times\vspace{3pt}\\
&\quad\quad\int_0^1 f_{x_1}\Big(t_i+(1-x_2)\frac{F(y_{i+1})-F(y_{i})}{y_{i+1}-y_{i}},x_2\Big)dx_2\vspace{3pt}\\
&\le(-2(y_{i+1}-y_i)+t_i(y_{i+1}-y_i)+F(y_{i+1})-F(y_i)-(c(z_{i+1})-c(z_i))))\times \vspace{3pt}\\
&\quad\int_0^1 f\Big(t_i+(1-x_2)\frac{F(y_{i+1})-F(y_{i})}{y_{i+1}-y_{i}},x_2\Big)dx_2<0,\vspace{3pt}\\

\end{array}\]
for all $t_i>\underline{t}_i,$ where 
we used the fact that

$$\int_0^1 \Big|f_{x_1}\Big(t_i+(1-x_2)\frac{F(y_{i+1})-F(y_{i})}{y_{i+1}-y_{i}},x_2\Big)\Big|dx_2 \le \alpha \le \int_0^1 f\Big(t_i+(1-x_2)\frac{F(y_{i+1})-F(y_{i})}{y_{i+1}-y_{i}},x_2\Big)dx_2,$$

$t_i < 1$ and $F(y_{i+1})-F(y_i)<c(z_{i+1})-c(z_i)$ due to condition \ref{condition2} and our assumption $c(z_1) >F(z_1)$.

Hence, $\frac{\partial^2\mathcal{P}}{\partial t_i^2}<0$ for all 
$t_i\in\Big[\underline{t}_i,1-\frac{F(y_{i+1})-F(y_i)}{y_{i+1}-y_i}\Big]$. 

When $l_i$ reaches $\{1\}\times[0,1],$ $t_i$ lies in $\Big[1-\frac{F(y_{i+1})-F(y_i)}{y_{i+1}-y_i},1\Big].$ Then,
\[\begin{array}{ll}\frac{\partial\mathcal{P}}{\partial t_i}&=(y_{i+1}-y_i)\int_{D_i(t_i)}f(x)dx_1\,dx_2\vspace{3pt}\\
&\quad+(c(z_{i+1})-c(z_i)-t_i(y_{i+1}-y_i)-F(y_{i+1})+F(y_i))\times \vspace{3pt}\\
&\quad\quad\int_{r_i}^1 f\Big(t_i+(1-x_2)\frac{F(y_{i+1})-F(y_{i})}{y_{i+1}-y_{i}},x_2\Big)dx_2\
\end{array}\]
where $r_i=1-\frac{y_{i+1}-y_i}{F(y_{i+1})-F(y_i)}(1-t_i).$ We differentiate to get
\[\begin{array}{ll}
\frac{\partial^2\mathcal{P}}{\partial t_i^2}&=-2(y_{i+1}-y_i)\int_{r_i}^1 f\Big(t_i+(1-x_2)\frac{F(y_{i+1})-F(y_{i})}{y_{i+1}-y_{i}},x_2\Big)dx_2\vspace{3pt}\\
&\quad-(t_i(y_{i+1}-y_i)+F(y_{i+1})-F(y_i)-(c(z_{i+1}-c(z_i)))\times\vspace{3pt}\\
&\quad\Big(-\frac{y_{i+1}-y_i}{F(y_{i+1})-F(y_i)}f(1,r_i)+\int_{r_i}^1 f_{x_1}\Big(t_i+(1-x_2)\frac{F(y_{i+1})-F(y_{i})}{y_{i+1}-y_{i}},x_2\Big)dx_2\Big)\vspace{3pt}\\
\end{array}\]
This expression may not always be negative; however, we will show that it is increasing in $t_i$.  Differentiating again, and using the rage of $t_i,$ we get

\[\begin{array}{ll}
    
\frac{\partial^3\mathcal{P}}{\partial t_i^3}&=3(y_{i+1}-y_i)\Big(\frac{y_{i+1}-y_i}{F(y_{i+1})-F(y_i)}f(1,r_i)-\int_{r_i}^1 f_{x_1}\Big(t_i+(1-x_2)\frac{F(y_{i+1})-F(y_{i})}{y_{i+1}-y_{i}},x_2\Big)dx_2\Big)\vspace{3pt}\\
&\quad+(t_i(y_{i+1}-y_i)+F(y_{i+1})-F(y_i)-(c(z_{i+1}-c(z_i)))\times\vspace{3pt}\\
&\quad\quad\Bigg(\Big(\frac{y_{i+1}-y_i}{F(y_{i+1})-F(y_i)}\Big)^2f_{x_2}(1,r_i)+\frac{y_{i+1}-y_i}{F(y_{i+1})-F(y_i)}f_{x_1}(1,r_i)\vspace{3pt}\\
&\quad-\int_{r_i}^1 f_{x_1x_1}\Big(t_i+(1-x_2)\frac{F(y_{i+1})-F(y_{i})}{y_{i+1}-y_{i}},x_2\Big)dx_2\Bigg)>0\vspace{3pt}\\

\end{array}\]

using the assumptions on $f$ where
\[\begin{array}{ll}
    
&\Big(\frac{y_{i+1}-y_i}{F(y_{i+1})-F(y_i)}\Big)^2f_{x_2}(1,r_i)+\frac{y_{i+1}-y_i}{F(y_{i+1})-F(y_i)}f_{x_1}(1,r_i)\vspace{3pt}\\
&\quad-\int_{r_i}^1 f_{x_1x_1}\Big(t_i+(1-x_2)\frac{F(y_{i+1})-F(y_{i})}{y_{i+1}-y_{i}},x_2\Big)dx_2\vspace{3pt}\\
&>\frac{1}{F'(\Tilde{y})^2} f_{x_2}(1,r_i)+\frac{1}{F'(\Tilde{y})}f_{x_1}(1,r_i)-\int_{r_i}^1 f_{x_1x_1}\Big(t_i+(1-x_2)\frac{F(y_{i+1})-F(y_{i})}{y_{i+1}-y_{i}},x_2\Big)dx_2.\vspace{3pt}\\

\end{array}\]

Therefore, the function $t_i \mapsto \mathcal{P}(t_0,\dots,t_{M-1})$ can have at most one inflection point.  Furthermore, the inflection point, if it exists, does not depend on the other $t_j$, since, as noted above $\frac{\partial\mathcal{P}}{\partial t_i}$ and hence $\frac{\partial^2\mathcal{P}}{\partial t_i^2}$ does not depend on $t_j$ for $j \neq i$.

We define $\hat t_i$ to be the inflection point, that is, $\frac{\partial^2\mathcal{P}}{\partial t_i^2}(t_0,\dots \hat t_i,\dots,t_{M-1})=0$ if it exists, $\hat t_i =1$ otherwise.   Note that $\hat{t}_i\in\Big[1-\frac{F(y_{i+1})-F(y_i)}{y_{i+1}-y_i},1\Big].$
From the definition of nestedness and Lemmas \ref{case1},\ref{case2},\ref{case3} and \ref{m}, we get that any maximizer $\overline t= (\overline{t}_1,...,\overline t_{M-1})$ of $\mathcal{P}$ is a local maximizer, and therefore $\frac{\partial^2\mathcal{P}}{\partial t_i^2}(\overline{t}_1,...,\overline t_{M-1}) \leq 0$ for each $i$.  Therefore, every maximizer lies in $A=\cap_{i=0}^{M-1}A_i$, where $A_i=\{(t_0,\dots,t_{M-1}),\, \underline{t}_i<t_i\leq \hat{t}_i\}\cap\mathcal{B}$. However,  $\mathcal{P}$ is strictly concave on the convex set $A$, which implies uniqueness of the maximizer.
\end{proof}

\bibliographystyle{plain}  
\bibliography{references}

\end{document}